\begin{document}
\newcommand{\comment}[1]{\marginpar{\footnotesize #1}} 

\newtheorem{proposition}{Proposition}[section]
\newtheorem{lemma}[proposition]{Lemma}
\newtheorem{sublemma}[proposition]{Sublemma}
\newtheorem{theorem}[proposition]{Theorem}

\newtheorem{maintheorem}{Main Theorem}
\newtheorem{corollary}[proposition]{Corollary}

\newtheorem{ex}[proposition]{Example}

\theoremstyle{remark}

\newtheorem{remark}[proposition]{Remark}

\theoremstyle{definition}
\newtheorem{definition}[proposition]{Definition}
\def\BB{\mathcal{B}}
\def\FF{\mathcal{F}}
\def\GG{\mathcal{G}}
\def\II{\mathcal{I}}
\def\JJ{\mathcal{J}}
\def\LL{\mathcal{L}}
\def \PP{\mathcal {P}}
\def \QQ{\mathcal {Q}}
\def\PPP{\mathbb{P}}

\def\real{{\mathbb R}}
\def\rational{{\mathbb Q}}
\def\natural{{\mathbb N}}
\def\integer{{\mathbb Z}}

\def\B{\mathcal{B}}
\def\C{\mathcal{C}}
\def\O{\mathcal{C}}
\def\P{\mathcal{P}}
\def\S{\mathcal{U}}
\def\U{\mathcal{U}}

\def\const{\operatorname{const}}
\def\closure{\operatorname{closure}}
\def\dist{\operatorname{dist}}
\def\esssup{\operatorname{ess\ sup}}
\def\essinf{\operatorname{ess\ inf}}
\def\inte{\operatorname{int}}
\def\Lip{\operatorname{Lip}}
\def\max{\operatorname{max}}
\def\min{\operatorname{min}}
\def\mod{\operatorname{mod}}
\def\osc{\operatorname{osc}}
\def\sign{\operatorname{sign}}
\def\supp{\operatorname{supp}}

\def\al{\alpha}
\def\be{\beta}
\def\vep{\varepsilon}
\def\th{\theta}
\def\om{\omega}
\def\la{\lambda}
\def\La{\Lambda}
\def\ga{\gamma}
\def\ka{\kappa}

\def\ome{\tilde{\om}}
\def\a{a_J}
\def\I{I}
\def\i{[0,\delta]}
\def\ii{[0,\vep]}
\def\l{l_0}

\title[LIL and ASIP for one-parameter families of interval maps]
{Law of iterated logarithm and invariance principle for one-parameter families of interval maps}
\author{Daniel Schnellmann}
\address{DMA, UMR 8553, \'Ecole Normale Sup\'erieure,  75005 Paris, France}
\email{daniel.schnellmann@gmail.com}
\date{\today}

\begin{abstract}
We show that for almost every map in a transversal one-parameter family of piecewise expanding 
unimodal maps the Birkhoff sum of suitable observables along the forward orbit of the turning point 
satisfies the law of iterated logarithm. This result will follow from 
an almost sure invariance principle for the Birkhoff sum, 
as a function on the parameter space. Furthermore, we obtain a 
similar result for general one-parameter families 
of piecewise expanding maps on the interval.
\end{abstract}

\thanks{The author is grateful to Viviane Baladi for encouraging him to work on this question. 
This work was accomplished at DMA, ENS, Paris. The author was supported 
by the Swiss National Science Foundation.}
\maketitle

\section{Introduction}
In this introduction we consider only piecewise expanding unimodal maps. However, 
all the following results can be extended to more general families of piecewise 
expanding interval maps (see Section~\ref{s.uniform}).
We call a map $T:[0,1]\to[0,1]$ a {\em piecewise expanding unimodal map} or {\em tent map} 
if it is continuous and if there exists a turning point 
$c\in(0,1)$ such that $T|_{[0,c]}$ and $T|_{[c,1]}$ are $C^{1+\alpha}$, $\|1/T'\|_\infty<1$
and $\|T'\|_\infty<\infty$, 
and $T(1)=T(0)=0$. 
We assume that $T$ is {\em mixing}, i.e., it is topologically mixing in the interval $[T^2(c),T(c)]$.
Let $\mu$ denote the unique (hence ergodic) absolutely continuous invariant probability 
measure (acip) for $T$. 
By Birkhoff's ergodic theorem, $\mu$ almost every (or in this case also Lebesgue 
almost every) point $x\in[0,1]$ is {\em typical} for $\mu$, i.e., 
\begin{equation}
\label{eq.typical}
\lim_{n\to\infty}\frac1n\sum_{i=1}^n\varphi(T^i(x))=\int_0^1\varphi d\mu\,,\qquad\forall \varphi\in C^0\,.
\end{equation}
A natural question is how fast this convergence takes place. In order to answer this 
question one has to take a smaller set of observables: By \cite{krengel} and \cite{djr}, 
for any sequence $\alpha_n$ such that $\lim_{n\to\infty}\alpha_n=\infty$, there is a 
dense $G_\delta$ set in $C^0$ such that for all $\varphi$ in this set one has
$$
\lim_{n\to\infty}\alpha_n\Big|\frac1n\sum_{i=1}^n\varphi(T^i(x))-\int_0^1\varphi d\mu\Big|=\infty\,.
$$
A suitable set of observables for which the question about the speed of convergence makes 
sense is for example the set of H\"older continuous functions (or more generally the 
set of functions of generalised bounded variation; see Definition~\ref{d.generalizedbv} below). 
For $\varphi$ H\"older, set 
\begin{equation}
\label{eq.sigma}
\sigma(\varphi)^2:=\int_0^1\Big(\varphi-\int\varphi d\mu\Big)^2d\mu
+2\sum_{i>0}\int_0^1\Big(\varphi-\int\varphi d\mu\Big)\Big(\varphi-\int\varphi d\mu\Big)\circ T^id\mu\,.
\end{equation}
Since we have exponential decay of correlation (see, e.g., Proposition~\ref{p.udc} below), 
$\sigma(\varphi)$ is finite and since we can 
write $\sigma(\varphi)=\lim_{n\to\infty}n^{-1}Var(S_n)$, where $S_n$ is the $n$-th Birkhoff sum, 
we see that $\sigma(\varphi)\ge0$. If $\sigma(\varphi)=0$, then $\varphi$ is a co-boundary and 
there exists an $L^1$ function $\psi$ so that $\varphi=\psi\circ T-\psi$ almost surely. 
Henceforth, we exclude this (degenerate) case, i.e., we will always assume that $\sigma(\varphi)>0$. 
Turning back to the question about the speed of convergence of \eqref{eq.typical}, it is shown in 
\cite{hk} that if we restrict ourself to the set of H\"older continuous observables $\varphi$ 
then the {\em law of iterated logarithm (LIL)} holds: For a.e. $x\in[0,1]$, we have
\begin{equation}
\label{eq.lil}
\limsup_{n\to\infty}\frac1{\sqrt{2n\log\log n}}\sum_{i=1}^n\Big(\varphi(T^i(x))-\int\varphi d\mu\Big)
=\sigma(\varphi)\,.
\end{equation}

For tent maps $T$ the turning point $c$ is of particular dynamical interest. 
A lot of information about the dynamics of $T$ is contained in the forward orbit of $c$, and
it is natural to ask if \eqref{eq.lil} holds when we take $x=c$. 
For a recent work where the assumption that the turning point satisfies the 
LIL is crucial, see \cite{bms}. 
 However, even if 
we know that \eqref{eq.typical} and \eqref{eq.lil} hold for a.e. point $x$, it is a very difficult 
question to say wether they hold for a particular point $x$. So instead of asking for 
the LIL for $c$ for a single tent map $T$, we perturb this map by a 
one-parameter family of tent maps and ask if the LIL for $c$ holds 
for almost every map in this family. 
Let $T_a$, $a\in[0,1]$, be a one-parameter family of piecewise expanding unimodal maps 
through $T=T_0$.
We make some natural regularity assumptions on 
the parameter dependency as, e.g., the turning point $c_a$ is 
Lipschitz continuous in $a$ and if $J\subset[0,1]$ is an interval on which $x\neq c_a$, 
then $a\mapsto T_a(x)$ is $C^{1+\alpha}$ on $J$ (for the precise conditions 
we refer to the beginning of Section~\ref{s.uniform}). 
Of course in order that the question of this paragraph makes sense we have to exclude 
trivial one-parameter families as for example the constant one or  families for 
which the turning point is eventually mapped to a periodic point for all parameters. 
The right condition here is {\em transversality} which is a common non-degeneracy condition 
for one-parameter families of interval maps (see, e.g., 
\cite{tsujii}, \cite{ALM}, \cite{Levin}, \cite{BS}, \cite{GS}, \cite{s2}, 
\cite{BBS} for previous occurrences of this condition in 
the literature). 
We say that the family $T_a$ is {\em transversal} at $T_0$ if 
there exists a constant $C\ge1$ such that
\begin{equation}
\label{eq.transversal}
C^{-1}\le\left|\frac{\partial_aT_a^j(c_a)|_{a=0}}{(\partial_xT_0^{j-1})(T_0(c_0))}\right|\le C\,,
\quad\forall\ \text{$j$ large.}
\end{equation}
(If $c_0$ is periodic for $T_0$, then we take one-sided derivatives.)
The transversality condition says that the $a$-derivative along 
the postcritical orbit is comparable to its $x$-derivate. Since the 
$x$-derivative is growing exponentially fast, this implies that if we change the 
parameter the dynamics of the corresponding map will change fast which makes 
it then possible to study the generic behaviour of the postcritical orbit.
If the family $T_a$ is {\em transversal} at $T_0$, then it is shown in \cite{s2} that 
for a.e. parameter $a$ close to $0$ the turning point $c_a$ is typical for the acip $\mu_a$ 
(for related results see \cite{bruin}, \cite{schmeling}, and \cite{fp}).
Given almost sure typicality of the turning point we can now ask for the speed 
of convergence of \eqref{eq.typical} in this setting.

The main result of this paper can be stated as follows (see also Theorem~\ref{t.tenttransversal} 
in Section~\ref{s.examples} below). To the best of the authors knowledge, it 
is the first result which treats the question of a LIL for a {\em specific} point for a.e. parameter 
in a one-parameter family of dynamical systems. 
Recall the notation $\sigma$ in \eqref{eq.sigma}. We 
will use the notation $\sigma_a$ when considering the map $T_a$.

\begin{theorem}
\label{t.lil}
Assume that $T_0$ is mixing, its turning point $c_0$ is not periodic, and the family $T_a$ is transversal at $T_0$. 
If $\varphi$ is H\"older 
and $\sigma_0(\varphi)>0$, then there exists $\epsilon>0$ 
such that for almost every $a\in[0,\epsilon]$ the 
turning point $c_a$ satisfies the LIL 
for the function $\varphi$ under the map $T_a$, i.e., 
$$
\limsup_{n\to\infty}\frac1{\sqrt{2n\log\log n}}\sum_{i=1}^n
\Big(\varphi(T_a^i(c_a))-\int\varphi d\mu_a\Big)
=\sigma_a(\varphi)\,.
$$
\end{theorem}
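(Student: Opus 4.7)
The strategy is to derive the LIL from a stronger almost sure invariance principle (ASIP) on parameter space. Writing
$$ S_n(a) := \sum_{i=1}^n\Bigl(\varphi(T_a^i(c_a))-\int\varphi\,d\mu_a\Bigr),$$
I aim to construct, on an enlargement of the Lebesgue probability space on $[0,\epsilon]$, a standard Brownian motion $W$ such that
$$ S_n(a)=W\bigl(n\,\sigma_a(\varphi)^2\bigr)+o\bigl(n^{1/2-\delta}\bigr)\quad\text{for a.e. }a,$$
for some $\delta>0$. The classical Strassen/Philipp--Stout result then yields the LIL with constant $\sigma_a(\varphi)$. Using the uniform Lasota--Yorke framework of Section~\ref{s.uniform} and the uniform decay of correlations from Proposition~\ref{p.udc}, the transfer operators $\LL_a$ have a spectral gap depending continuously on $a$ near $0$, so $\sigma_a(\varphi)$ is continuous and strictly positive on some interval $[0,\epsilon]$.

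The main technical step is a decay-of-correlations estimate on parameter space: for test functions $\psi_1,\psi_2$ of generalised bounded variation and indices $j_1<j_2$,
$$\biggl|\int_0^\epsilon \psi_1(T_a^{j_1}(c_a))\,\psi_2(T_a^{j_2}(c_a))\,da -\epsilon\Bigl(\int\psi_1\,d\mu_0\Bigr)\Bigl(\int\psi_2\,d\mu_0\Bigr)\biggr|\le C\,\theta^{j_2-j_1},$$
modulo boundary corrections, together with analogous bounds for $k$-fold products. Following the mechanism introduced in \cite{s2}, the transversality condition \eqref{eq.transversal} lets one perform an approximate change of variables from $a$ to $y=T_a^{j_1}(c_a)$ on each monotonicity branch of the map $a\mapsto T_a^{j_1}(c_a)$; the remaining phase-space average is then handled by iterating $\LL_0^{j_2-j_1}\psi_2$ and invoking the spectral gap, while the parameter dependence of $\LL_a$ over the small window $[0,\epsilon]$ is absorbed as a perturbation.

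With this mixing in hand, I would assemble the ASIP via a Philipp--Stout / Gou\"ezel block decomposition. Partition $\{1,\dots,n\}$ into alternating big blocks of length $p_k\sim k^{1+\eta}$ and small gap blocks of length $q_k\sim k^{1-\eta}$. The correlation estimate above implies that partial sums of $S_n$ over distinct big blocks are approximately independent on parameter space, with variance converging to $\sigma_a(\varphi)^2$ times the block length (the limit identified through the phase-space Green--Kubo formula), while the gap blocks contribute an $o(n^{1/2-\delta})$ error. A Skorokhod embedding of the resulting block sums into Brownian motion then delivers the required ASIP.

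The principal obstacle is the parameter-space correlation estimate. Unlike the classical setting, the randomness lives in the parameter $a$, so spectral information about $\LL_a$ has to be transported to Lebesgue estimates on $[0,\epsilon]$. One must simultaneously control: the non-smoothness of $a\mapsto T_a^j(c_a)$ near parameters at which some iterate of $c_a$ hits the turning point, which breaks the change of variables into many monotone pieces whose number grows exponentially; the resulting distortion, which is tamed by transversality; and the fact that the centering $\int\varphi\,d\mu_a$ itself varies with $a$, producing additional terms that must be shown to be lower order. Pushing these estimates to the polynomial error rate $n^{1/2-\delta}$ needed for ASIP, rather than settling for a CLT or the bare $\sqrt{n\log\log n}$ bound, is the delicate part of the argument and is exactly where both the uniform exponential mixing of Proposition~\ref{p.udc} and the full strength of the transversality hypothesis are required.
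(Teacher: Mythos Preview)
Your outline is correct and follows essentially the same route as the paper: deduce the LIL from an ASIP on parameter space, proved via a Philipp--Stout block decomposition and Skorokhod embedding, with the mixing input obtained by using transversality to change variables from $a$ to $y=T_a^{j_1}(c_a)$ on monotonicity pieces and then invoking the uniform spectral gap of Proposition~\ref{p.udc}. The one refinement relative to your sketch is that the clean global two-point bound you write does not hold as stated; the paper instead works \emph{locally} on partition elements $\om\in\PP_{j_1}$ whose image under $x_{j_1}$ is not too small (Lemma~\ref{l.switch} and Corollary~\ref{c.switch}), and the obstacle you flag in your last paragraph---the exponentially many monotone pieces and their boundary effects---is controlled precisely by condition~(III) together with Lemma~\ref{l.goodpartition}, whose verification for tent maps (Theorem~\ref{t.tenttransversal}) rests on the invariant density being bounded below uniformly in $a$.
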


In order to prove Theorem~\ref{t.lil}, we will show a stronger property, the so called {\em almost sure 
invariance principle (ASIP)}, for the turning point. 
We say that the functions $\xi_i:[0,\epsilon]\to\real$, $i\ge1$, satisfy the ASIP with error 
exponent $\gamma<1/2$ if 
there exists a probability space supporting a Brownian motion $W$ and a sequence of 
variables $\eta_i$, $i\ge1$, such that
\begin{itemize}
\item[(i)]
$\{\xi_i\}_{i\ge1}$ and $\{\eta_i\}_{i\ge1}$ have the same distribution;
\item[(ii)]
almost surely as $n\to\infty$, 
$$
\left|W(n)-\sum_{i=1}^{n}\eta_i\right|=O(n^\gamma)\,.
$$
\end{itemize}

The following corollary is shown, e.g., in \cite{ps}. For other implications of the ASIP
we refer to \cite{hh}.

\begin{corollary}
\label{c.lil}
If the functions $\xi_i$ 
satisfy the ASIP then they satisfy also the LIL and the central limit theorem. More 
precisely, if $\sigma^2$ is the variance of the related Brownian motion, then 
$$
\limsup_{n\to\infty}\frac1{\sqrt{2n\log\log n}}\sum_{i=1}^n
\xi_i(a)
=\sigma\,,\qquad\text{for a.e. }a\in[0,\epsilon]\,,
$$
and, for all $t\in\real$,
$$
\lim_{n\to\infty}m\Big(\Big\{a\in[0,\epsilon]\mid\frac1{\sigma\sqrt{n}}\sum_{i=1}^n\xi(a)\le t\Big\}\Big)
=\frac1{\sqrt{2\pi}}\int_{-\infty}^te^{-s^2/2}ds\,,
$$
where $m$ denotes the Lebesgue measure.
\end{corollary}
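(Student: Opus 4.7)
The plan is to reduce both statements to classical facts about Brownian motion via the ASIP coupling. By property (i), the sequences $\{\xi_i\}_{i\ge1}$ on $([0,\epsilon],m)$ and $\{\eta_i\}_{i\ge1}$ on the ASIP probability space share the same joint distribution. The LIL is an almost-sure statement depending only on the joint law of the partial sums, and the CLT is a weak-convergence statement depending only on their marginal distributions, so it suffices to prove both assertions for $S_n:=\sum_{i=1}^n\eta_i$ and then transfer the conclusions back to $\{\xi_i\}$ through (i).

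For the LIL I would invoke Khinchin's law of iterated logarithm for Brownian motion, which yields $\limsup_{n\to\infty} W(n)/\sqrt{2n\log\log n}=\sigma$ almost surely. Property (ii) provides $|W(n)-S_n|=O(n^\gamma)$ with $\gamma<1/2$, so the difference divided by $\sqrt{2n\log\log n}$ tends to zero and the limsup passes verbatim from $W(n)$ to $S_n$. For the CLT, Brownian scaling gives $W(n)/(\sigma\sqrt{n})\sim N(0,1)$ exactly for every $n$, while $(W(n)-S_n)/\sqrt{n}=O(n^{\gamma-1/2})\to 0$ almost surely; Slutsky's theorem then delivers convergence in distribution of $S_n/(\sigma\sqrt{n})$ to $N(0,1)$.

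The only quantitative input used is the bound $\gamma<1/2$, which is precisely the threshold built into the definition of the ASIP so that the approximation error is dominated by both $\sqrt{n\log\log n}$ and $\sqrt n$. Accordingly, there is no real obstacle in this corollary; it is a routine translation of well-known properties of Brownian motion along the coupling, and all the substantive work lies in establishing the ASIP itself, which is the main business of the paper.
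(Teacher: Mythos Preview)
Your argument is correct and is exactly the standard reduction to Brownian motion via the ASIP coupling. The paper does not supply its own proof of this corollary but simply refers to \cite{ps}; what you wrote is precisely the routine argument one finds there, so there is nothing to compare.
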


For $\varphi$ H\"older and such that $\sigma_0(\varphi)>0$, 
for $i\ge1$ and $a$ small, set 
\begin{equation}
\label{eq.varphia}
\varphi_a(x):=\frac1{\sigma_a(\varphi)}
\Big(\varphi(x)-\int_0^1\varphi d\mu_a\Big). 
\end{equation}
Lemma~\ref{l.varregularity} below guarantees\footnote{In order that condition~(II) in 
Lemma~\ref{l.varregularity} is satisfied,
we assume that $T_0$ is mixing and $c_0$ is not periodic (see proof of 
Theorem~\ref{t.tent}).} that
$a\mapsto\sigma_a(\varphi)$ is continuous at $0$ and, hence, 
the function $\varphi_a$ is well-defined for $a$ sufficiently 
close to $0$. Due to this normalisation we have
\begin{equation}
\label{eq.varphinormalised}
\sigma_a(\varphi_a)=1\,,\qquad\text{and}\quad\int\varphi_ad\mu_a=0\,,
\end{equation}
for all $a$ sufficiently close to $0$.
We are going to show an ASIP for the functions
\begin{equation}
\label{eq.xi}
\xi_i(a):=\varphi_a(T_a^i(x))\,,\qquad i\ge1\,.
\end{equation}

\begin{theorem}
\label{t.asip}
Assume that $T_0$ is mixing, $c_0$ is not periodic, and the family $T_a$ is transversal at $T_0$. 
Then there exists $\epsilon>0$ such that the functions $\xi_i:[0,\epsilon]\to\real$, 
$i\ge1$, satisfy the ASIP for all error exponents $\gamma>2/5$.
\end{theorem}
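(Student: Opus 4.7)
The plan is to regard the $\xi_i$'s as random variables on the probability space $([0,\epsilon],m)$ (with $m$ normalized Lebesgue measure) and reduce the statement to a standard block ASIP, such as the Philipp--Stout or Berkes--Philipp result, for sequences whose partial sums satisfy good approximate independence estimates. The reduction hinges on transferring the \emph{phase-space} statistical properties of the family $\{T_a\}$ into \emph{parameter-space} properties of $\xi_i(\cdot)$, and the transversality assumption~\eqref{eq.transversal} is precisely what makes this transfer quantitative.

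The core technical step is a change-of-variables comparison. By~\eqref{eq.transversal}, the derivative $|\partial_a T_a^i(c_a)|$ is comparable to $(T_0^{i-1})'(T_0(c_0))$, which grows exponentially in $i$. Partitioning $[0,\epsilon]$ into intervals $J$ of length of the order of $(T_0^{i-1})'(T_0(c_0))^{-1}$, the map $a\mapsto T_a^i(c_a)$ is approximately affine on $J$ and its image is an interval of uniformly bounded length inside $[T_0^2(c_0),T_0(c_0)]$. Pushing forward the restriction of Lebesgue to $J$ therefore yields a measure with a generalized-bounded-variation density on the image, which, using uniform-in-$a$ quasicompactness of the transfer operator together with the continuity $a\mapsto\mu_a$ (via Lemma~\ref{l.varregularity}), can be compared up to an exponentially small error to a local piece of the acip $\mu_{a_J}$ for a representative $a_J\in J$. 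In particular, integrating $\xi_{i+j}(a)$ against $m|_J$ behaves, for $j$ moderate, like integrating $\varphi_{a_J}\circ T_{a_J}^j$ against $d\mu_{a_J}$.

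Using this tool, I would split $S_n(a)=\sum_{i=1}^n\xi_i(a)$ into consecutive blocks $H_k$ of intermediate length separated by small gaps. Combining the change of variables above with exponential decay of correlations on phase space (Proposition~\ref{p.udc}) gives the parameter-space estimate $\int \xi_i\xi_j\,dm=O(\lambda^{-|i-j|})$ for large $i,j$ and, more generally, yields approximate independence of the block sums with an exponential rate. A computation based on \eqref{eq.varphinormalised} and \eqref{eq.sigma} shows that $\mathrm{Var}(S_n)/n\to1$, which pins down the variance of the limiting Brownian motion. Feeding these variance, covariance, and mixing-type estimates into a general blocking ASIP with standard tuning of block-size and gap-size exponents (polynomial in $k$) then yields the almost sure invariance principle; the threshold $2/5$ reflects the optimal balance between the polynomial loss incurred in the distribution-matching step (coming from the bounded-variation/transfer-operator bounds) and the usual error budget for truncation and block approximation.

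The main obstacle I expect is making the distribution-matching uniform across blocks. One must control the image of $a\mapsto T_a^i(c_a)$ over long parameter intervals, and in particular exclude those $a$ for which some iterate of $c_a$ lands close to a discontinuity of $T_a$, to retain uniform distortion bounds; this is where the hypothesis that $c_0$ is not periodic for $T_0$ enters, via an argument as in \cite{s2}, to guarantee that the set of bad parameters in each block has sufficiently small measure that the induced errors add up to a term absorbed by the $O(n^\gamma)$ allowance with $\gamma>2/5$.
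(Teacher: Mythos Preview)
Your overall architecture matches the paper's: transfer parameter-space statistics to phase space via a transversality-based change of variables (this is exactly Lemma~\ref{l.switch} and Corollary~\ref{c.switch}), exploit uniform exponential decay of correlations (Proposition~\ref{p.udc}), form polynomial blocks, and conclude through a Philipp--Stout type embedding. So the strategy is right.

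The gap is in the sentence ``feeding these \dots\ estimates into a general blocking ASIP''. The paper stresses that the $\xi_i$ are \emph{not} iterations of a fixed system, there is no invariant measure on $[0,\epsilon]$, and the standard strong-mixing hypothesis of \cite[Section~7]{ps} (or the analogous Berkes--Philipp input) cannot be formulated, let alone verified. One must instead reproduce the Section~3 argument of \cite{ps} by hand: approximate $\xi_i$ by step functions $\chi_i=E(\xi_i\mid\FF_i)$ adapted to the filtration generated by the dynamical partitions $\PP_{i+[i^\delta]}$, form blocks $y_j=\sum_{i\in I_j}\chi_i$ with $|I_j|\sim j^{2/3}$, prove a law of large numbers for $y_j^2$ via Gal--Koksma (the core input being Proposition~\ref{p.mainestimate}), correct $y_j$ by a coboundary to a genuine martingale difference $Y_j$, and only then apply Skorokhod's representation theorem directly. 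Your two-point decay $\int\xi_i\xi_j\,dm=O(\lambda^{-|i-j|})$ is also too clean: what one actually obtains (Lemma~\ref{l.chiapprox}) is $|E(\xi_{i+j}\mid\FF_i)|\le C\rho_0^{j-2i^\delta}$ valid only \emph{off an exceptional set} of stretched-exponentially small measure, and these exceptional sets---controlled by condition~(III) through Lemma~\ref{l.goodpartition}---must be carried through every estimate. Finally, the threshold $2/5$ is not the residue of a distribution-matching loss but comes from the block size $j^{2/3}$: with $M\sim N^{3/5}$ blocks, the boundary block contributes $O(N^{2/5})$, and the fourth-moment bound $Ew_j^4\le Cj^{4/3+\iota}$ fixes the Gal--Koksma exponent; the paper notes that the natural improvement to $\gamma>1/3$ is blocked precisely by estimate~\eqref{eq.43}.
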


\begin{remark}
Theorem~\ref{t.asip} and, hence, Theorem~\ref{t.lil} hold 
also if $c_0$ is periodic and $T_0$ has a sufficiently high expansion (see 
Theorem~\ref{t.tenttransversal} in Section~\ref{s.examples} below). 
Because of the normalisation in the definition of the $\xi_i$'s, the variance 
of the Brownian motion in the ASIP is equal to $1$. 
For a comment on the optimality of the error exponent $\gamma$ see the beginning of 
Section~\ref{s.asip}.
\end{remark}

Regarding the proof of Theorem~\ref{t.asip} we go along a classical method form 
probability theory which consists in writing 
the Birkhoff sum approximatively as a sum of blocks of polynomial size, then in 
approximating these blocks by a martingale difference sequence, and finally in applying 
Skorokhod's representation theorem which provides a link between a martingale and a 
Brownian motion. This strategy is illustrated on 
many examples in Philipp and Stout \cite{ps}.
More precisely, we go along the approach in \cite[Section~3]{ps}. 
The 'usual' application of \cite{ps} in dynamical systems refers to \cite[Section~7]{ps} 
(see, e.g., \cite{hk}, \cite{dp}, and \cite{mn}). The key property here is a strong mixing condition 
which we do not have in our setting since loosely speaking the $\xi_i$'s are not iterations of a
fixed map. However, we can more or less replace this strong mixing condition by 
{\em uniformity of constants} in the Lasota-Yorke inequality for the family $T_a$ (see 
condition~(II) in Section~\ref{s.uniform}). By Keller-Liverani, 
we have then uniformity of constants for the exponential decay of correlation 
(see Proposition~\ref{p.udc}). This in turn can 
be used to show a certain exponential decay of correlation for the maps $\xi_i$'s (see, e.g., 
the proof of Proposition~\ref{p.mainestimate}) from which 
we are able to deduce similar estimates as in \cite[Section~3]{ps}. 
In the recent work \cite{gouezel}, Gou\"ezel uses spectral methods to show an almost 
sure invariance principle. His method is very powerful and it provides very good 
error estimates. However, we didn't find an easy way to apply these spectral techniques 
to our setting.

We would like to highlight that the main technical novelty or difficulty of this paper is 
to treat processes which are (at least ``locally'') close to processes generated by a dynamical system 
but for which there is no underlying invariant measure. Hence, various tools from 
ergodic theory cannot be applied directly. This explains the rather technical nature 
of this paper. The following example by Erd\"os and 
Fortet (see \cite{kac}, p. 646) shows how careful one should be when 
one wants to show an ASIP for a process which is not but very close to a process 
generated by a dynamical system: Let $\varphi(x)=\cos(2\pi x)+\cos(4\pi x)$ and 
consider the sequence $\xi_i(x)=\varphi(2^ix)$, $i\ge1$. $\xi_i$ is a process generated by 
the doubling map $x\mapsto 2x\mod 1$. It is straightforward to check that $\sigma(\varphi)>0$ 
and, for instance by the above cited ``dynamical'' paper \cite{hk}, 
it follows that the process $x_i$ satisfies the ASIP. However, if we change the process just 
slightly and consider instead $\xi_i(x)=\varphi((2^i-1)x)$ then, surprisingly, 
this new process does not satisfy anymore the central limit theorem (and, thus, 
not either the ASIP). 

As mentioned in the beginning of this section, the above presented results for tent maps hold 
for more general piecewise expanding maps on the interval. First, it is not essential to take 
the turning point as the point of interest. Any other point works fine as long as the $a$- and 
$x$-derivatives along its forward orbit are comparable. If we consider other piecewise expanding 
maps on the interval than tent maps, then we have to add two more conditions. 
The first one is to have uniform constants in the Lasota-Yorke inequality (see condition~(II) 
in Section~\ref{ss.main}). 
This is a natural condition when applying perturbation theory. The second condition 
(see condition~(III) in Section~\ref{ss.main}) is 
a bit more technical but satisfied for many one-parameter families, as it is shown in 
Section~\ref{s.examples}. In Section~\ref{s.examples} we mention also how to 
apply the main result of this paper, Theorem~\ref{t.main}, to obtain 
almost sure typicality results similar to the ones in \cite{s2} but under alternative conditions
(see Theorem~\ref{t.typical}).

The present paper deals exclusively with maps which are uniformly hyperbolic. 
It is a natural question if we can obtain a similar result in a non-uniformly 
hyperbolic setting. An interesting candidate for this question is the quadratic family 
$f_a(x)=ax(1-x)$ with parameter $a\in(0,4]$. Does the critical point $c=1/2$ satisfy 
the LIL for Lebesgue almost every Collet-Eckmann (CE) map for sufficiently smooth observables?
Despite a vast variety of results about the quadratic family, this question seems still to be
unsolved. To start with one should maybe content oneself with
finding a positive Lebesgue measure set 
of CE parameters such that the critical points of the 
corresponding CE maps satisfy the LIL. 
Almost sure typicality of the critical point is known: By Avila and Moreira \cite{avilamoreiratypical}, 
the critical point for Lebesgue almost every CE map $f_a$ in the 
quadratic family is typical for its SRB measure $\mu_a$. (For the subset of CE parameters 
considered by Benedicks and Carleson this result was shown in \cite{BC1}.)
An important ingredient in an attempt to find a positive measure set for which the turning point 
satisfies the LIL should be uniformity of constants in the set of CE parameters which one considers. 
For this one could 
follow the ``start-up procedure'' in Benedicks and Carleson \cite{BC1} 
which yields, in addition to uniformity of constants, at each step nice ``Markov partitions'' 
on the parameter space. On the partition elements of these Markov partitions, which are intervals, 
one should be able to define functions $\xi_i(a)$ as in \eqref{eq.xi}. (Observe that at each step one 
excludes parameter intervals from the previous Markov partition and, finally, one ends up 
with a Cantor set of positive Lebesgue measure.)
In \cite{mn} where the ASIP is shown for a fixed CE map, they use a tower construction 
to get an induced system with uniform hyperbolicity 
where more or less a straight forward application of \cite[Section~7]{ps} implies 
an ASIP which projects then down to the ASIP for the original CE map. Since 
in the parameter space one has to exclude an open and dense set of regular 
parameters, the ``start-up procedure'' in \cite{BC1} might provide a way to replace 
this tower construction in \cite{mn} when one deals only with one single CE map.

The paper is organised as follows. In Section~\ref{s.uniform}, we formulate 
a general model and give the main notations 
for the one-parameter families of piecewise expanding maps considered in this paper. This is 
followed by the main statement. Section~\ref{s.examples} 
contains examples of one-parameter families, such as families of tent maps, 
to which the result of this paper applies. Section~\ref{s.prel} deals with elementary facts 
as distortion estimates, uniform exponential decay of correlations, and the regularity of 
$a\mapsto\sigma_a$. Section~\ref{s.switch} and \ref{s.asip} are dedicated to the proof of the main 
statement, i.e., the proof of 
an almost sure invariance principle.

\section{Main statement}
\label{s.uniform}
We begin this section with an introduction of the basic notation and a formulation of 
a suitable model for 
one-parameter families of {\em piecewise expanding maps} of the unit interval. 
A map $T:[0,1]\to[0,1]$ will be called {\em piecewise $C^{1+\alpha}$\/}, $0<\alpha\le1$, 
if there exists a partition 
$0=b_0<b_1<...<b_p=1$ of the unit interval such that for each $1\le k\le p$ the restriction 
of $T$ to the open interval $(b_{k-1},b_k)$ is a $C^{1+\alpha}$ function. 
Let $T_a:[0,1]\to[0,1]$, 
$a\in[0,1]$, be a one-parameter family of piecewise $C^{1+\alpha}$ maps 
and let $0=b_0(a)<b_1(a)<...<b_{p(a)}(a)=1$ be the partition of the unit interval 
associated to $T_a$. 
We assume that the 
H\"older constants are uniform in $a$, i.e., there exist $0<\alpha\le1$ and a constant $C$ so that 
\begin{equation}
\label{eq.holder}
|T_a'(x)-T_a'(y)|\le C|x-y|^\alpha\,,\qquad \forall x,y\in (b_{k-1}(a),b_k(a))\ \ \text{and }\ \forall a\in[0,1]\,.
\end{equation}
We assume that the maps are uniformly {\em expanding}, i.e., we assume that 
there are real numbers $1<\la\le\La<\infty$ such that for every $a\in[0,1]$, 
\begin{equation}
\label{eq.la}
\|1/T_a'\|_\infty\le\lambda^{-1}\,,\qquad\text{and}\qquad\|T_a'\|_\infty\leq\La\,.
\end{equation}  

\begin{remark}
Regarding the framework in \cite{keller} and \cite{saussol}, it would be natural 
to skip the assumption that $\|T_a'\|_\infty\leq\La$ and to replace the 
requirement that $x\mapsto T_a'(x)$ is piecewise $\alpha$-H\"older by the requirement that 
$x\mapsto1/T_a'(x)$ is piecewise $\alpha$-H\"older. However, since our analysis 
on the parameter space seems to require some specific distortion estimates (see 
Lemma~\ref{l.distortion} below), we do not know how 
to make this improvement in our setting. For instance having in mind one-parameter families of 
one-dimensional Lorenz maps, it might be interesting to investigate such a more general setting.
\end{remark}

We make the following natural assumptions on the parameter dependence. 

\begin{itemize}
\item[(i)] 
The number of monotonicity intervals for the $T_a$'s is constant, i.e., $p(a)\equiv p_0$, 
and the partition points $b_k(a)$, $0\le k\le p_0$, are Lipschitz continuous on $[0,1]$. 
It follows that there is a constant $\delta_0>0$ such that 
$$
b_k(a)-b_{k-1}(a)\ge\delta_0,
$$
for all $1\le k\le p_0$ and $a\in[0,1]$.
\item[(ii)]
If $x\in[0,1]$ and $J\subset[0,1]$ is a parameter interval 
such that $b_k(a)\neq x$, for all $a\in J$ and $0\le k\le p_0$, then $a\mapsto T_a(x)$ is 
$C^{1+\alpha}$ and $a\mapsto \partial_xT_a(x)$ is $\alpha$-H\"older 
where the H\"older constants are independent on $x$. 
Further, the maps $x\mapsto\partial_a T_a(x)$, $x\in(b_{k-1}(a),b_k(a))$, $1\le k\le p_0$,  are 
$\alpha$-H\"older continuous (where the H\"older constants are uniform in $a$). 
\end{itemize}

In order to obtain an acip, we refer to a paper by 
G. Keller \cite{keller} (see Theorems~3.3 and 3.5 therein) 
who extended the results in \cite{lasota} on piecewise expanding 
$C^2$ maps to a broader class of maps containing also piecewise expanding $C^{1+\alpha}$ maps: 
For a fixed $a\in[0,1]$ there exists a finite number of ergodic acip for $T_a$. 
Further, by \cite{liyorke} combined with the remark in \cite{wong2} after Definition~4 on page 514 
(regarding property~(III) therein cf. also \cite[Proposition~ 5.1]{saussol}), 
there exist at most $p_0-1$ ergodic acip and the support of an ergodic acip is 
a finite union of intervals. 
Since we are always interested in only one ergodic acip, we can 
without loss of generality assume that for each $T_a$, $a\in[0,1]$, 
there is a unique (hence ergodic) acip which we denote by $\mu_a$.
Let $K(a)=\supp(\mu_a)$. 
We say that $T_a$ is {\em mixing} if it is topologically mixing on $K(a)$.
For $a\in[0,1]$, let 
$\{D_1(a),...,D_{p_1(a)}(a)\}$ be the connected components of 
$K(a)\setminus\{b_0(a),...,b_{p_0}(a)\}$, 
i.e., the $D_k(a)$'s are the monotonicity intervals for $T_a:K(a)\to K(a)$. We assume the following.
\begin{itemize}
\item[(iii)]
The number of $D_k(a)$'s is constant in $a$, i.e., $p_1(a)\equiv p_1$ for all $a\in[0,1]$. 
The boundary points of 
$D_k(a)$, $1\le k\le p_1$, are $\alpha$-H\"older continuous in $a$.
\end{itemize}

\subsection{Partitions}
\label{ss.partitions}
For a fixed parameter value $a\in[0,1]$, we denote by $\P_j(a)$, $j\ge1$, 
the partition on the dynamical interval 
consisting of the maximal open intervals of smooth monotonicity for the map $T_a^j: K(a)\to K(a)$. 
More precisely, $\P_j(a)$ denotes the set of open intervals $\om\subset K(a)$ such that 
$T_a^j:\om\to K(a)$ is $C^{1+\alpha}$ and $\om$ is maximal, i.e., for every other open interval $\ome\subset K(a)$ 
with $\om\subsetneq\ome$, $T_a^j:\ome\to K(a)$ is no longer $C^{1+\alpha}$. 
Clearly, the elements of $\P_1(a)$ are the interior of the intervals $D_k(a)$, $1\le k\le p_1$.

We will define similar partitions on the parameter interval $[0,1]$. 
Let $x_0:[0,1]\to[0,1]$ be a $C^{1+\alpha}$ map from the parameter interval $[0,1]$ 
into the dynamical interval $[0,1]$ where we assume that
\begin{equation}
\label{eq.convenient}
x_0(a)\in K(a)\setminus\{b_0(a),...,b_{p_0}(a)\}\,,\qquad \forall a\in(0,1)\,.
\end{equation}
The points $x_0(a)$, $a\in[0,1]$, are the points of interest in this paper, i.e., we 
are interested in the properties of the forward orbit of these points under $T_a$. 
The assumption \eqref{eq.convenient} is only for convenience and it helps to make 
the partitions $\PP_j$ below well-defined. (If a map $x_0$ does not satisfy \eqref{eq.convenient}, 
then combining the fact that Lebesgue a.e. point $x\in[0,1]$ is eventually mapped into $K(a)$ 
under $T_a$ with the transversality condition~(I) below, one can derive that \eqref{eq.convenient} 
is satisfied for some iteration of $x_0$ restricted to some smaller intervals located around 
$a=0$.)
The forward orbit of a point $x_0(a)$ under the map $T_a$ 
we denote as 
$$
x_j(a):=T_a^j(x_0(a)),\qquad\quad j\ge0.
$$
Observe that by assumption $x_j(a)\in K(a)$, for all $j\ge0$ and $a\in[0,1]$.

\begin{remark}
\label{rem.critical}
Since a lot of information for the dynamics of $T_a$ is contained in the forward orbits of the 
partition points $b_k(a)$, $0\le k\le p_0$, an interesting choice of the map $x_0$ is
$$
x_0(a)=\lim_{\begin{subarray}{c}
           x\to b_k(a)\pm 
           \end{subarray}}
     T_a(x).
$$
For example, in the case of tent maps we choose $x_0(a)=T_a^{j_0}(c_a)$, for $j_0$ sufficiently 
large (see Theorem~\ref{t.tenttransversal} below).
\end{remark}

Let $J\subset[0,1]$ be an interval.
By $\P_j|J$, $j\ge1$, we denote the partition consisting 
of all open intervals $\om$ in $J$ such that 
for each $0\le i<j$, $x_i(a)\in K(a)\setminus\{b_0(a),...,b_{p_0}(a)\}$, for all $a\in\om$, and such 
that $\om$ is maximal, 
i.e., for every other open interval $\ome\subset J$ with $\om\subsetneq\ome$, there exist 
$a\in\ome$ and $0\le i<j$ such that $x_i(a)\in\{b_0(a),...,b_{p_0}(a)\}$. 
Observe that this partition might be empty which is, e.g., the case when $x_1(a)$ is equal 
to a boundary point $b_k(a)$ for all $a\in[0,1]$.
However, such trivial situations (around $a=0$) 
are excluded by the transversality condition~(I)
formulated in the next Section~\ref{ss.main}. Knowing 
that condition~(I) is satisfied, then the partition $\P_j|J$, $j\ge1$, around $a=0$ 
can be thought of as the set of the 
(maximal) intervals of smooth monotonicity for $x_j:J\to[0,1]$
(cf. Lemma~\ref{l.transversality} below).
We set $\P_0|J=J$. 
Finally, in view of condition~(I) below, observe that if a parameter 
$a\in[0,1]$ is contained in an element of 
$\P_j|[0,1]$, $j\ge1$, then also the point $x_0(a)$ is contained in an element of 
$\P_j(a)$ which implies 
that $T_a^j$ is differentiable in $x_0(a)$.

\subsection{Main statement}
\label{ss.main}
We put two conditions on our sequence of maps $x_j$, $j\ge0$, around $a=0$. 
The first one (see condition~(I) below) is a common {\em transversality} condition 
for one-parameter families of interval maps which was already mentioned in 
the introduction. 
The second one (see condition~(III) below) is more technical. It 
is used for controlling the measure of the set of partition elements with a too small image. 
Further, in order to apply perturbation results we require that 
we have uniform constants in the Lasota-Yorke inequalities for 
the different maps in the family (see condition~(II) below).
This condition does not depend on the choice of the map $x_0$. 
Even if condition~(III) is quite technical, assuming that conditions~(I) and (II) hold, 
it is satisfied by many important one-parameter families of 
piecewise expanding maps, see Section~\ref{s.examples}. 
(Even if we suspect so, it is not clear to us if 
in general the transversality condition~(I), possibly together with condition~(II) and/or 
some weaker other conditions, implies 
condition~(III).) 

The transversality condition~(I) requires that the derivatives of $x_j$ and
$T_a^j$ at $a=0$ are comparable. This is the very basic assumption in this
paper. It says that locally the behaviour of the maps $x_j$ are comparable to the 
behaviour of the maps $T_a^j$. Since the LIL holds for the maps $T_a^j$ 
one can therefore hope to obtain similar properties for the maps $x_j$.
Of course, in order to have transversality the choice of the map $x_0:[0,1]\to[0,1]$ plays an
important role. If, e.g., for every parameter $a\in[0,1]$, $x_0(a)$ is a
periodic point for the map $T_a$, then $x_j$ will have bounded
derivatives and the dynamics of $x_j$ is completely different from the
dynamics of $T_a$. Henceforth, we will use the notations 
$$
T_a'(x)=\partial_xT_a(x)\qquad\text{and}\qquad x_j'(a)=\partial_a x_j(a),\quad j\ge1.
$$ 

\begin{itemize}
\item[(I)]
The right-derivatives $x_j'(0+)$, $j\ge1$, of $x_j$ in $0$ exist and
there is a constant $C\ge1$ so that  
$$
\frac{1}{C}\le\left|\frac{x_j'(0+)}{(T_0^j)'(x_0(0+))}\right|\le C\,,\qquad\forall j\ge1\,.
$$
Further, for each $j\ge1$, there exists a neighbourhood $V\subset[0,1]$ of $0$ 
so that for all $a\in V\setminus0$ and all $0\le i<j$, we have $x_i(a)\notin\{b_0(a),...,b_{p_0}(a)\}$.
\end{itemize}

\begin{remark}
Looking at the proof of the following Lemma~\ref{l.transversality}, one can derive that 
condition~(I) is satisfied if
$$ 
|x_0'(0+)|\ge\frac{\sup_{a\in[0,1]}\sup_{x\in K(0)}|\partial_aT_a(x)|_{a=0}|}{\la-1}+2L+1,
$$
where $L$ is the Lipschitz constant of the partition points $b_0(a),...,b_{p_0}(a)$.
In other words, as soon as the initial derivative is sufficiently large we have transversality 
which makes it easy to verify this property numerically.
\end{remark}

The following lemma ensures that if condition~(I) holds then we can compare 
the $a$- and the $x$-derivatives along the forward orbit of $x_0(a)$ on an entire,
sufficiently small interval around $0$. 
Its proof is given at the end of this section.

\begin{lemma}
\label{l.transversality}
Assume that the family $T_a$ satisfies condition~(I). Then, 
there exists $\epsilon>0$ and a constant $C\ge1$ 
so that for $\om\in\PP_j|[0,\epsilon]$, $j\ge1$, we have
$$
\frac{1}{C}\le\left|\frac{x_j'(a)}{(T_a^j)'(x_0(a))}\right|\le C\,,\qquad\forall a\in\om\,.
$$
Furthermore, for each $j\ge1$, 
the number of $a\in[0,\epsilon]$ which are not contained in any element 
$\om\in\PP_j|[0,\epsilon]$ is finite.
\end{lemma}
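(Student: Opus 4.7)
My plan is to introduce the ratios
$$R_j(a) := \frac{x_j'(a)}{(T_a^j)'(x_0(a))}$$
and establish a telescoping formula for them. Differentiating $x_{j+1}(a) = T_a(x_j(a))$ and dividing by the chain-rule expression $(T_a^{j+1})'(x_0(a)) = T_a'(x_j(a))(T_a^j)'(x_0(a))$ gives
$$R_{j+1}(a) - R_j(a) = \frac{(\partial_a T_a)(x_j(a))}{(T_a^{j+1})'(x_0(a))}, \qquad R_0(a) = x_0'(a),$$
valid on every element of $\PP_{j+1}|[0,\epsilon]$. Since $|T_a'| \ge \lambda > 1$ uniformly and condition~(ii) forces $|\partial_a T_a| \le M$ for some $M$, each increment is bounded by $M\lambda^{-(j+1)}$. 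Summing gives the closed form
$$R_j(a) = x_0'(a) + \sum_{i=0}^{j-1} \frac{(\partial_a T_a)(x_i(a))}{(T_a^{i+1})'(x_0(a))},$$
and the upper bound $|R_j(a)| \le \sup_{[0,1]}|x_0'| + M/(\lambda-1)$ is immediate and uniform in $j$.

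The lower bound is the substantive point. Applying the telescoping identity at $a = 0+$, the geometric tail estimate shows that $R_j(0+)$ is Cauchy and converges to some $R_\infty(0+)$ with $|R_\infty(0+)| \ge 1/C$ by passing to the limit in condition~(I). Given any $\eta > 0$, I pick $j_0$ large enough that $M\lambda^{-j_0}/(\lambda-1) < \eta$. The second clause of condition~(I), applied to this specific $j_0$, supplies a neighborhood $V$ of $0$ on which the orbits $x_0,\ldots,x_{j_0-1}$ avoid every partition point for $a \neq 0$; shrinking $\epsilon$ so that $[0,\epsilon] \subset V$ forces $\PP_{j_0}|[0,\epsilon] = \{(0,\epsilon)\}$, and on this single interval the finite sum defining $R_{j_0}(a)$ depends continuously on $a$. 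A further shrinking of $\epsilon$ produces $|R_{j_0}(a) - R_{j_0}(0+)| < \eta$, hence $|R_{j_0}(a)| \ge 1/C - \eta$. For any $j > j_0$, every $\om \in \PP_j|[0,\epsilon]$ satisfies $\om \subset (0,\epsilon)$, and the difference $R_j(a) - R_{j_0}(a)$ is a further geometric tail bounded by $\eta$, so $|R_j(a)| \ge 1/C - 2\eta$ on $\om$. An analogous continuity argument handles $1 \le j < j_0$. Choosing $\eta = 1/(4C)$ closes the two-sided bound with a new constant.

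For the finiteness of exceptional parameters, I enlarge $j_0$ if necessary so that also $\lambda^{j_0}/(2C) > L$, where $L$ is the Lipschitz constant of the partition points $b_k(a)$. For $j \ge j_0$ and $\om \in \PP_j|[0,\epsilon]$, the just-proved lower bound on $R_j$ combined with uniform expansion yields $|x_j'(a)| = |R_j(a)|\cdot|(T_a^j)'(x_0(a))| \ge \lambda^j/(2C) > L$, so $a \mapsto x_j(a) - b_k(a)$ is strictly monotone on $\om$ and vanishes at most once there. Together with $\PP_{j_0}|[0,\epsilon] = \{(0,\epsilon)\}$, an induction on $j$ shows that $\PP_j|[0,\epsilon]$ has finitely many elements, which is equivalent to the finiteness statement of the lemma. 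The main technical subtlety is the order of quantifiers: $j_0$ must be chosen before $\epsilon$, and condition~(I) is precisely the tool that lets one trade a global statement over all $j$ for a single continuity estimate on the initial segment of length $j_0$ together with a geometric tail.
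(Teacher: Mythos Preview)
Your proof is correct and follows essentially the same route as the paper: both derive the telescoping formula $R_j(a)=x_0'(a)+\sum_{i=1}^{j}(\partial_aT_a)(x_{i-1}(a))/(T_a^{i})'(x_0(a))$, use condition~(I) to fix a suitable $j_0$, extend the bound at $a=0+$ to a small interval by continuity, control $j>j_0$ via the geometric tail, and then run the same monotonicity/induction argument for the finiteness of exceptional parameters. The only cosmetic difference is that the paper obtains the lower bound by choosing $j_0$ so that $|x_{j_0}'(0+)|$ dominates the tail sum directly in the version of the formula with base index $k=j_0$, whereas you phrase it as a triangle-inequality estimate $|R_j(a)|\ge|R_{j_0}(0+)|-2\eta$; the content is the same.
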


Apart from transversality we require also to have uniform constants in the Lasota-Yorke 
inequality. Let $\LL_a:L^1([0,1])\to L^1([0,1])$ be the ordinary (Perron--Frobenius) transfer operator, 
i.e.,
$$
\LL_a \varphi(x)=\sum_{T_a(y)=x} \frac{\varphi(y)}{|T_a'(y)|}\,. 
$$
The appropriate space of observables $V_\alpha$, $0<\alpha\le1$, 
for which $\LL_a$ has a spectral gap and which is convenient for our setting
 was introduced in \cite{keller} (see also 
\cite{wong}, \cite{hk}, and \cite{saussol} which treats the higher dimensional case). 
$V_\alpha$ is the space of functions of {\em generalised bounded variation}.

\begin{definition}[Banach space $V_\alpha$]
\label{d.generalizedbv}
For $\varphi\in L^1(m)$ and $\delta>0$, we define
$$
\osc(\varphi,\delta,x)=\esssup\varphi|_{(x-\delta,x+\delta)}-\essinf\varphi|_{(x-\delta,x+\delta)}\,,
$$
and, for $0<\alpha\le1$ and $A>0$, set
$$
|\varphi|_\alpha=\sup_{0<\delta\le A}\frac1{\delta^\alpha}
\int_0^1\osc(\varphi,\delta,x)dx\,.
$$
The space $V_\alpha$ consists of all $\varphi\in L^1(m)$ such that $|\varphi|_\alpha<\infty$. 
On $V_\alpha$ we define the norm
$$
\|\varphi\|_\alpha=|\varphi|_\alpha+\|\varphi\|_{L^1}\,.
$$
(Observe that the norm $\|\cdot\|_\alpha$ depends also on the constant $A$.)
\end{definition}

It follows immediately that $V_\alpha$ contains all $\alpha$-H\"older functions. 
Further, by \cite[Theorem~1.13]{keller} and \cite[Proposition~3.4]{saussol}, 
the space $V_\alpha$ together with 
the norm $\|.\|_\alpha$ is a Banach space and there exists a constant $C=C(\alpha)$ so that 
for all $\varphi_1,\varphi_2\in V_{\alpha}$ we have 
\begin{equation}
\label{eq.inftyalpha}
\|\varphi_1\|_{\infty}\le C\|\varphi_1\|_{\alpha}\,,
\end{equation}
and
\begin{equation}
\label{eq.algebra} 
\|\varphi_1\varphi_2\|_{\alpha}
\le C\|\varphi_1\|_{\alpha}\|\varphi_2\|_{\alpha}\,.
\end{equation}
Having introduced our main Banach space $V_\alpha$ we can now state our 
second condition. This condition is independent on the choice of the map $x_0$.

\begin{itemize}
\item[(II)]
$T_0$ is mixing and there exist constants $\epsilon>0$, $C\ge1$ and 
$0<\tilde\rho<1$ such that for all $\varphi\in V_\alpha$
\begin{equation}
\label{eq.uniformLY}
\|\LL_a^n\varphi\|_\alpha\le C\tilde\rho^n\|\varphi\|_{\alpha}+C\|\varphi\|_{L^1}\,.
\end{equation}
\end{itemize}

As already mentioned above the last condition is a bit more technical.  
It is used to guarantee that images by $x_n$ of ``most'' elements in $\PP_n$ are 
not too small (see Lemma~\ref{l.goodpartition}).
For an alternative condition see Remark~\ref{r.3p} below.

\begin{itemize}
\item[(III)]
There exists $\epsilon>0$ such that for all $\delta_0>0$ 
there exists a constant $C$ so that 
\begin{equation}
\label{eq.condII}
\sum_{\om\in\PP_n|[0,\epsilon]}\frac1{\|x_n|_\om'\|_\infty}\le Ce^{n^{\delta_0}}\,,\qquad\forall n\ge1\,.
\end{equation}
\end{itemize}

We can now state the main result of this paper. By Corollary~\ref{c.lil}, this 
result immediately implies the law of iterated logarithm. Recall the definition of $\sigma$ in 
\eqref{eq.sigma} (where the observable $\varphi$ is now in the space $V_\alpha$).

\begin{theorem}
\label{t.main}
Let $T_a:[0,1]\to[0,1]$, $a\in[0,1]$, be a piecewise expanding one-parameter family, 
satisfying properties~(i)-(iii) and condition~(II) for some $0<\alpha\le1$.
If for a $C^{1+\alpha}$ map $x_0:[0,1]\to[0,1]$ property~\eqref{eq.convenient} and
conditions~(I) and (III) are satisfied, then for all $\varphi\in V_\alpha$ such that 
$\sigma_0(\varphi)>0$ there exists $\epsilon>0$ so that 
the process $\xi_i:[0,\epsilon]\to\real$, $i\ge1$, defined by 
\begin{equation}
\label{eq.xigen}
\xi_i(a)=\frac1{\sigma_a(\varphi)}\Big(\varphi(x_i(a))-\int\varphi d\mu_a\Big)\,,
\end{equation}
satisfy the almost sure invariance principle for any error exponent $\gamma>2/5$.
\end{theorem}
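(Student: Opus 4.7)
\medskip

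The plan is to adapt the classical block-martingale-Skorokhod scheme of Philipp--Stout (Section~3 of \cite{ps}) to the parameter process $\xi_i$. The first step is to split the partial sum $S_N(a) = \sum_{i=1}^N \xi_i(a)$ into consecutive blocks $u_k(a) = \sum_{i \in I_k} \xi_i(a)$ where the block lengths $|I_k|$ grow polynomially (say like $k^p$ for some $p$ to be optimised, with small separating "gap" blocks between them to give the needed quasi-independence). To work with these blocks I would introduce a filtration $\FF_k$ on the parameter interval $[0,\epsilon]$ generated by refinements of the partitions $\PP_{n_k}|[0,\epsilon]$ at the right endpoints of the blocks, so that the conditional expectations $\mathbb{E}(u_k \mid \FF_{k-1})$ have dynamical meaning as averages over the parameter cells.

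The central step is to prove that the $\xi_i$'s enjoy enough mixing to run the martingale approximation. This is what the author signposts as the key novelty: there is no underlying invariant measure on the parameter space, so one cannot invoke the strong mixing condition used, e.g., in Section~7 of \cite{ps}. Instead, I would fix a parameter cell $\om \in \PP_n|[0,\epsilon]$ and use Lemma~\ref{l.transversality} to locally push the integral over $\om$ forward to an integral on the dynamical side weighted by $|x_n'|$. On each such cell, $|x_n'|/|(T_a^n)'(x_0(a))|$ is uniformly comparable, so the push-forward of Lebesgue measure restricted to $\om$ is (after a bounded distortion estimate in the spirit of Lemma~\ref{l.distortion}) close to $\|x_n|_\om'\|_\infty$ times an $L^\infty$-bounded density on $x_n(\om)$. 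Applying the uniform decay of correlations (Proposition~\ref{p.udc}) obtained via Keller--Liverani from condition~(II) then yields a decay-of-correlation statement for the $\xi_i$'s themselves: roughly, for $\om \in \PP_n|[0,\epsilon]$, $j \ge 1$, and $\psi_1, \psi_2 \in V_\alpha$, one gets an estimate of the form
\begin{equation*}
\Big| \int_\om \psi_1(a)\, \psi_2 \circ x_{n+j}(a)\, da - \tfrac{1}{|\om|}\!\int_\om \psi_1\, da \cdot \int_\om \psi_2 \circ x_{n+j}\, da\,\Big| \le C\,\rho^j\,|\om|,
\end{equation*}
modulo controlled remainders. This is the analogue of Proposition~\ref{p.mainestimate} mentioned by the author and will play the role of the strong mixing hypothesis in \cite{ps}.

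The remaining work is book-keeping. I would use the decay estimate above together with condition~(III) to bound the contribution of the exceptional cells on which $\|x_n|_\om'\|_\infty$ is too small; condition~(III) says that after summing over all cells in $\PP_n|[0,\epsilon]$ the reciprocal derivatives accumulate only subexponentially, which is precisely what is needed to absorb a union bound on these "bad" cells into an exponentially small correction (see the expected Lemma~\ref{l.goodpartition}). One then shows the standard second-moment estimates $\mathbb{E}(u_k^2) \approx |I_k|$ and $|\mathbb{E}(u_k u_{k'})| \le C|I_k|\rho^{|I_{k-1}|}$ needed by \cite{ps}, upgrades $u_k$ to a martingale difference sequence $v_k$ via the telescoping trick of subtracting successive conditional expectations, and finally applies Skorokhod's embedding to realise the $v_k$'s as stopped increments of a Brownian motion with variance $1$ (the normalisation \eqref{eq.varphinormalised}). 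Optimising the block length exponent $p$ against the error from the martingale approximation and the Skorokhod stopping times yields the bound $\gamma > 2/5$.

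The main obstacle, and the technical heart of the proof, will be establishing the decay-of-correlation statement for the $\xi_i$ process sketched above: it is here that the absence of an invariant measure on the parameter side bites, and it is here that transversality (I), the uniform Lasota--Yorke bound (II), and the partition-size control (III) must all be combined simultaneously to transfer the dynamical-side spectral gap of $\LL_a$ into a usable probabilistic estimate on $[0,\epsilon]$.
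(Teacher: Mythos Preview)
Your outline is essentially the paper's own strategy: Philipp--Stout blocks, a local ``switch'' from parameter to phase space using transversality and bounded distortion, uniform decay of correlations from condition~(II), control of bad cells via condition~(III), then martingale approximation and Skorokhod embedding. Two points of divergence are worth noting.

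First, the paper does \emph{not} use separating gap blocks. The blocks $I_j$ are consecutive with $|I_j|=[j^{2/3}]$ and no gaps; the quasi-independence you want from gaps is obtained instead by replacing each $\xi_i$ by the step function $\chi_i=E(\xi_i\mid\FF_i)$, where $\FF_i$ is the $\sigma$-field generated by $\PP_{r_i}$ with $r_i=i+[i^\delta]$. The extra $[i^\delta]$ iterations in the filtration play the role your gaps would, and the approximation error $|\xi_i-\chi_i|$ is controlled via an oscillation estimate and Borel--Cantelli. Your gap-block variant would presumably also work, but the paper's device is what actually appears.

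Second, and more substantively, the central technical estimate is not the block covariance bound $|E(u_ku_{k'})|\le C|I_k|\rho^{|I_{k-1}|}$ you wrote, but a law of large numbers for the \emph{squares} of the blocks: $|N-\sum_{j=1}^M y_j^2(a)|\le CN^{2\gamma}$ almost surely. This is proved via Gal--Koksma's strong law and requires bounding $Ew_j^4$ (fourth moments of blocks) and the block-square covariances $|Ew_j^2w_k^2-Ew_j^2\,Ew_k^2|$; the latter is where the local switching lemma and condition~(III) are really exercised (Proposition~\ref{p.mainestimate} gives the local estimate $E[(\sum_{k=m}^{m+n-1}\xi_k)^2\mid\om]=n+O(1)$ on good cells $\om$, and this is what feeds into the square-covariance bound). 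Your sketch gestures at second moments only; the actual work sits one level up, at the variance of the squared blocks. Once that LLN is in hand, the martingale correction $u_j=\sum_{k\ge0}E(y_{j+k}\mid\LL_{j-1})$ and Skorokhod embedding proceed as you describe, and the exponent $2/3$ for block sizes is what yields $\gamma>2/5$.
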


We conclude this section with the proof of Lemma~\ref{l.transversality}.

\begin{proof}[Proof of Lemma~\ref{l.transversality}]
Recall that the boundary points $b_0(a),...,b_{p_0}(a)$ are Lipschitz continuous and let 
$L$ be their Lipschitz constant. By condition~(I), we can take $j_0\ge1$ be so large that
$|x_{j_0}'(0+)|\ge\sup_{a\in[0,1]}\sup_{x\in K(a)}|\partial_aT_a(x)|/(\la-1)+2L+1$.
Recall that by condition~(I) there exists a neighbourhood $V\subset[0,1]$, so that 
$x_i(a)\notin\{b_0(a),...,b_{p_0}(a)\}$, for all $a\in V\setminus0$ and $0\le i<j_0$.
Hence, by continuity, we find 
$\epsilon>0$ (where $[0,\epsilon)\subset V$) so that 
\begin{equation}
\label{eq.startcomp}
|x_{j_0}'(a)|\ge\frac{\sup_{a\in[0,1]}\sup_{x\in K(a)}|\partial_aT_a(x)|}{\la-1}+2L\,,\qquad\forall 
a\in(0,\epsilon)\,.
\end{equation}
Let $j\ge1$ and assume in the following formulas that, for the parameter values $a\in[0,\epsilon]$ under 
consideration, $x_j$ and $T_a^j$ are differentiable in $a$ and $x_0(a)$, respectively.
For $0\le k<j$ we have 
\begin{equation}
\label{eq.startcalc1}
x_j'(a)=(T_a^{j-k})'(x_k(a))x_k'(a)+\sum_{i=k+1}^j(T_a^{j-i})'(x_i(a))(\partial_aT_a)(x_{i-1}(a)),
\end{equation}
which implies
\begin{equation}
\label{eq.startcalc}
\frac{x_j'(a)}{(T_a^j)'(x_0(a))}
=\frac{1}{(T_a^k)'(x_0(a))}\left(x_k'(a)+\sum_{i=k+1}^j
\frac{(\partial_aT_a)(x_{i-1}(a))}{(T_a^{i-k})'(x_k(a))}\right).
\end{equation}
For $j>j_0$, choosing $k=0$ and $k=j_0$, respectively, we get the following upper and lower bounds: 
\begin{equation}
\label{eq.startcalc2}
\frac{2L}{|(T_a^{j_0})'(x_0(a))|}\le\left|\frac{x_j'(a)}{(T_a^j)'(x_0(a))}\right|\le
\sup_{a\in[0,\epsilon]}\left(|x_0'(a)|+\frac{\sup_{x\in K(a)}|\partial_aT_a(x)|}{\la-1}\right),
\end{equation}
where for the lower bound we used the assumption \eqref{eq.startcomp}. 
It is only left to show that for each $j\ge j_0$ 
the number of 
$a\in[0,\epsilon]$ which 
are not contained in any element $\om\in\P_j|[0,\epsilon]$ is finite. 
This is easily done by induction over $j$. 
Observe first that, by the assumption on $x_0$, $x_j(a)\in K(a)$ for all $j\ge0$ and $a\in[0,1]$. 
So the only case that prevents $a$ to be contained in any element of $\om\in\P_j|[0,\epsilon]$ is 
when $x_i(a)\in\{b_0(a),...b_{p_0}(a)\}$, for some $i<j$.
By the choice of $\epsilon$ above inequality \eqref{eq.startcomp}, 
 only $0$ and $\epsilon$ might not be contained in any element of 
 $\om\in\P_{j_0}|[0,\epsilon]$.
Assume that $j\ge j_0$ and consider the partition $\P_{j+1}|[0,\epsilon]$. 
From the lower bound in \eqref{eq.startcalc2}, we derive that 
$|x_j'(a)|\ge\la^{j-j_0}2L>L$ for all $a$ contained in an element of $\P_j|[0,\epsilon]$. 
Since the boundary points $b_k(a)$ are $\Lip(L)$, 
we have that $x_j(a)\in K(a)\setminus\{b_1(a),...,b_k(a)\}$ for all but finitely many $a\in[0,\epsilon]$.
Hence, by the induction assumption we conclude that 
the number of 
$a\in[0,\epsilon]$ which 
are not contained in any element $\om\in\P_{j+1}|[0,\epsilon]$ is finite. 
This concludes the proof of Lemma~\ref{l.transversality}.
\end{proof}


\section{Tent maps and other examples}
\label{s.examples}
In this section we give some examples of piecewise expanding one-parameter families 
to which Theorem~\ref{t.main} can be applied. 

We start with a trivial example which provides a good insight regarding the technical 
condition~(III). Let $T_0:[0,1]\to[0,1]$ be a mixing piecewise expanding map admitting a unique 
acip $\mu_0$ with support, say, $[0,1]$. Let $\varphi\in V_\alpha$ so that $\sigma_0(\varphi)>0$. 
We will deduce the well-known fact that the functions $\varphi(T_0^j(x))-\int\varphi d\mu_0$, 
$j\ge1$,
satisfy the ASIP (see, e.g., \cite{hk}) from Theorem~\ref{t.main}:
As the one parameter family we take the constant 
family $T_a\equiv T_0$, for all $a\in[0,1]$. The map $x_0$ is the identity, i.e., $x_0(a)=a$. 
Obviously the transversality condition~(I) is satisfied. The Lasota-Yorke inequality 
for the map $T_0$ which we need follows from \cite[Theorem~3.2]{keller}. 
In order 
to apply Theorem~\ref{t.main}, the remaining condition to verify is condition~(III). 
Let $h=d\mu_0/dm$ be the density of $\mu_0$. 
By \cite{keller78} and \cite{Ko}, 
there exists a constant $C$ so that 
$$
\frac1{C}\le h(x)\le C\,,\qquad\text{for a.e. }x\in[0,1]\,.
$$
Since $h$ is a fixed point of the transfer operator $\LL_0$, we derive that
\begin{equation}
\label{eq.fix}
\sum_{T_0^n(y)=x} \frac1{|(T_0^n)'(y)|}
\le C\sum_{T_0^n(y)=x} \frac{h(y)}{|(T_0^n)'(y)|}
= Ch(x)\le C^2\,,
\end{equation}
for a.e. $x$.
Recall that the elements $\PP_1(0)$ are of the form $(b_{i-1},b_i)$, for $1\le i\le p_0$, and 
observe that the set of boundary points $\{\partial T_0^n(\om)\mid\om\in\PP_n(0)\}$ consists of 
maximally $2np_0$ points. This implies that we can find points $x_1,...,x_k$, $k\le2np_0$,
which lie close to this set, so that 
\begin{equation}
\label{eq.fixx}
\sum_{\om\in\PP_n(0)}\frac1{\|(T_0^n)'|_\om\|_\infty}
\le C\sum_{i=1}^k\sum_{T_0^n(y)=x_i} \frac1{|(T_0^n)'(y)|}
\le2C^3p_0n\,.
\end{equation}
(In the first inequality we used also a  standard 
distortion estimate for piecewise expanding maps; see, e.g., \eqref{eq.distortion2} below.)
Since, by definition, $x_n'(a)=(T_0^n)'(a)$ and $\PP_n|[0,1]=\PP_n(0)$, this 
concludes the verification of condition~(III). 
Observe that in this trivial setting the right hand side of \eqref{eq.condII} is only increasing linearly 
in $n$.

We continue by studying some non-trivial examples, first the tent maps which is the main 
purpose of this paper and then $\beta$-transformations and 
Markov partition preserving families. In the end of this section, we give an application 
of our results in order to obtain almost sure typicality results similar to the ones in \cite{s2}.

\subsection{Tent maps}
Let $T_a:[0,1]\to[0,1]$, $a\in[0,1]$, be a one-parameter family of tent maps, i.e., there 
exist $1<\lambda\le\Lambda<\infty$ and $0<\alpha\le1$ so that, for each $a\in[0,1]$, the map
$T_a:[0,1]\to[0,1]$ is continuous and there exists a turning point 
$c_a\in(0,1)$ such that $T_a|_{[0,c_a]}$ and $T_a|_{[c_a,1]}$ are $C^{1+\alpha}$, 
$0<\alpha\le1$, (where 
the H\"older constants, see \eqref{eq.holder}, are uniform in $a$), 
$\lambda\le|T_a'(x)|\le\Lambda$, for all $x\neq c_a$, 
and $T_a(1)=T_a(0)=0$. Regarding the parameter dependency we assume that 
properties~(i) and (ii) in the beginning of Section~\ref{s.uniform} are satisfied. 
Recall the definition \eqref{eq.transversal} of a transversal family of tent maps $T_a$.

\begin{theorem}
\label{t.tenttransversal}
Assume that the family $T_a$ be is transversal at $T_0$.
Further, assume that $T_0$ is mixing and that the turning point $c_0$ is either not periodic or if 
$p$ is its period then 
\begin{equation}
\label{eq.period}
\lambda^{\alpha p}>2\,.
\end{equation}
If $\varphi\in V_\alpha$  
so that $\sigma_0(\varphi)>0$, then there exists $\epsilon>0$ 
such that for almost every $a\in[0,\epsilon]$ the 
turning point $c_a$ satisfies the LIL 
for the function $\varphi$ under the map $T_a$.
\end{theorem}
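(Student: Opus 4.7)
The plan is to invoke Theorem~\ref{t.main} with $x_0(a) := T_a^{j_0}(c_a)$ for a suitable $j_0 \geq 1$, and then to deduce the LIL from the resulting ASIP via Corollary~\ref{c.lil}. Using mixing of $T_0$ together with continuity in $a$, one can pick $j_0$ large enough that $T_a^{j_0}(c_a)$ lies in the interior of $K(a)$ and avoids the partition points $\{0,c_a,1\}$ for all $a$ in a neighbourhood of $0$, yielding \eqref{eq.convenient}; when $c_0$ is periodic all $a$-derivatives at $0$ are understood as one-sided. The regularity properties (i)--(iii) of Section~\ref{s.uniform} and the $C^{1+\alpha}$ regularity of $x_0$ are then immediate from the standing tent hypotheses.

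For the transversality condition~(I), the chain rule gives $x_j'(0+) = \partial_a T_a^{j+j_0}(c_a)|_{a=0+}$ and $(T_0^j)'(x_0(0+)) = (T_0^{j+j_0})'(c_0+)/(T_0^{j_0})'(c_0+)$, so the hypothesis \eqref{eq.transversal} applied at index $j+j_0$ yields the comparison $|x_j'(0+)| \asymp |(T_0^j)'(x_0(0+))|$ with constants depending only on $j_0$. The auxiliary statement that $x_i(a) \notin \{0,c_a,1\}$ for $a$ near $0$ and each fixed $i<j$ follows from the same transversality argument, which forces any such equality to be an isolated event.

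The main obstacle I anticipate is the uniform Lasota--Yorke inequality~(II). A tent map satisfies a Lasota--Yorke inequality on $V_\alpha$ whose per-iterate contraction factor is essentially $2\lambda^{-\alpha}$, so after $p$ iterations the constant is on the order of $2^p \lambda^{-\alpha p}$. If $c_0$ is not periodic, the forward orbit of the turning point never returns to itself and a standard perturbation argument in the spirit of \cite{keller} establishes uniform decay of $\LL_a^n$ in a small parameter neighbourhood. If $c_0$ is periodic with period $p$, the single-iterate constant can exceed one, and the assumption $\lambda^{\alpha p} > 2$ is precisely what guarantees that the $p$-fold iterated constant $2^p\lambda^{-\alpha p}$ is strictly less than one, after which the same perturbation argument applies to $\LL_a^p$ in place of $\LL_a$. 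Keeping all constants uniform in $a$ across the threshold forced by the periodicity is where the most delicate bookkeeping is needed.

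For condition~(III), Lemma~\ref{l.transversality} gives $\|x_n|_\omega'\|_\infty \geq \lambda^n/C$ uniformly for $\omega \in \PP_n|[0,\epsilon]$, so the problem reduces to controlling $\sum_\omega 1/\|x_n|_\omega'\|_\infty$. One transports the transfer-operator bound \eqref{eq.fix}, used in the trivial example preceding this theorem, from dynamical to parameter space via distortion (Lemma~\ref{l.distortion}) and transversality: distortion gives $|\omega| \cdot \|x_n|_\omega'\|_\infty \asymp |x_n(\omega)|$ on each $\omega$, while transversality identifies $\omega$ with a branch of $T_a^n$ issuing from a point close to $x_0(0)$, so that summing exactly as in \eqref{eq.fixx} yields $\sum_\omega 1/\|x_n|_\omega'\|_\infty = O(n)$, comfortably within the tolerance in \eqref{eq.condII}. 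With~(I),~(II) and~(III) verified, Theorem~\ref{t.main} delivers the ASIP for $\xi_i(a) = (\varphi(T_a^{i+j_0}(c_a)) - \int \varphi\, d\mu_a)/\sigma_a(\varphi)$ with error exponent $\gamma > 2/5$, and Corollary~\ref{c.lil} converts this to the LIL for $c_a$ for almost every $a \in [0,\epsilon]$, the finite initial shift by $j_0$ being absorbed by the boundedness of $\varphi$.
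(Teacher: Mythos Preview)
Your overall strategy matches the paper's, but there are two genuine gaps.

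For condition~(II), your arithmetic is off: iterating the single-step Lasota--Yorke inequality $p$ times would give a factor of order $(2\lambda^{-\alpha})^p = 2^p\lambda^{-\alpha p}$, and the hypothesis $\lambda^{\alpha p}>2$ does \emph{not} make this less than one. What actually happens is that one applies Keller's Lasota--Yorke inequality \emph{directly to the map $T_a^p$}, obtaining a factor $(2+\delta)\lambda^{-\alpha p}$; the point of the periodicity assumption (or the choice of $p$ when $c_0$ is not periodic) is to ensure that the monotonicity intervals of $T_a^p$ have length bounded below uniformly in $a$ near $0$, which is what makes the constants in Keller's Lemma~3.1 uniform.

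For condition~(III), your sketch glosses over the key difficulty. You want to compare $\sum_{\omega\in\PP_n|[0,\epsilon]} 1/\|x_n'|_\omega\|_\infty$ to the transfer-operator bound \eqref{eq.fix}, which lives at a \emph{single} parameter $a$. Two ingredients are missing. First, you need an injection from $\PP_n|I$ into $\PP_n(a_I)$ preserving combinatorics, for suitable subintervals $I$ and reference points $a_I$; the paper quotes a monotonicity result for tent families from \cite[Section~6.3]{s2} (essentially monotonicity of kneading sequences) to get this, and it is not automatic from transversality alone. Second, the distortion estimate \eqref{eq.distortion2} that lets you compare $(T_{a_1}^n)'$ with $(T_{a_2}^n)'$ requires $|a_1-a_2|\le n^{-1/\alpha}$, so one must first subdivide $[0,\epsilon]$ into $\sim n^{1/\alpha}$ pieces; this is why the paper's final bound is $n^{1+1/\alpha}$ rather than $O(n)$. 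You also need the uniform lower bound on the invariant densities $h_a$ (the paper's inequality \eqref{eq.habelow}) to make the constant in \eqref{eq.fix} independent of $a$; this requires a separate argument using the uniform $V_\alpha$-bound on $h_a$ and exactness of $T_a$.
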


\begin{proof}
In order to prove Theorem~\ref{t.tenttransversal}, we will verify conditions~(I)--(III). Then 
we can apply Theorem~\ref{t.main} and Corollary~\ref{c.lil} which concludes the proof. 
(We have also to make sure that property~(iii) in Section~\ref{s.uniform} is satisfied. 
This will follow, as a by-product, from the second last paragraph in this proof.)

Regarding condition~(I), we define the map $x_0:[0,1]\to[0,1]$ 
as $x_0(a)=T_a^{j_0}(c_a)$, where $j_0\ge1$ is so large that \eqref{eq.transversal} 
holds for all $j\ge j_0$. Observe that, since $T_0$ is piecewise expanding 
and by \eqref{eq.transversal}, we find a constant $\delta>0$ 
so that $x_0(a)\notin\{0,c_a,1\}$, for all $a\in(0,\delta]$ (otherwise, in a neighbourhood of $a=0$, 
$c_a$ would be pre-periodic and hence 
$|x_j'(0)|$ would be bounded in $j$ contradicting the transversality \eqref{eq.transversal}). 
Hence, property \eqref{eq.convenient} 
is satisfied for $x_0$ on the interval $[0,\delta]$. As before, 
using once more \eqref{eq.transversal}, 
for each $j\ge1$, we find a neighbourhood $V\subset[0,\delta]$ of $0$ so that $x_j(a)\neq c_a$, 
for all $a\in V\setminus 0$ (otherwise $|x_j'(a)|$ would be bounded). 
We conclude that $x_0$ satisfies condition~(I) (and
we can assume that $x_0$
satisfies \eqref{eq.convenient} on the interval $[0,1]$).

We continue with the verification of condition~(II) 
which is a condition on the family and which does not involve the 
map $x_0$. The problem in verifying condition~(II) is to get uniform constants in the 
Lasota-Yorke inequality. If $c_0$ is not periodic let $p$ be so large so that 
also in this non-periodic case inequality~\eqref{eq.period} is satisfied. 
\cite[Theorem~3.2]{keller} and its proof shows that for all $\delta>0$ and 
all $a\in[0,1]$ we find a constant 
$C=C(\delta,a)$ and $A=A(\delta,a)>0$ 
(recall that the norm $\|\cdot\|_\alpha$ depends also on $A$) 
so that, setting $\rho=(2+\delta)/\lambda^{\alpha p}$, we have
\begin{equation}
\label{eq.lasota}
\|\LL_a^p\varphi\|_\alpha\le\rho\|\varphi\|_\alpha+C\|\varphi\|_{L^1}\,.
\end{equation}
By \eqref{eq.period}, we can fix $\delta>0$ so small that $\rho<1$.
Hence, if we show that in a neighbourhood of $0$ we can choose the constants $C$ and $A$ 
uniformly in $a$, 
then \eqref{eq.lasota} combined with the assumption that $T_0$ is mixing implies condition~(II).
In order to verify this uniformity of $C$ and $A$, we have to show that 
the constants $K$ and $A$ in \cite[Lemma~3.1]{keller} can chosen independently 
on $a$ in an neighbourhood of $0$.
Set $M=\delta\lambda^{-\alpha}(\delta/(16+2\delta))^{1-\alpha}$. 
By continuity we find an $\epsilon>0$ so that $T_a^i(c_a)\neq c_a$, for all $a\in[0,\epsilon]$ and 
all $1\le i\le p-1$. Hence, we find a constant $\kappa>0$ so that for all $a\in[0,\epsilon]$ 
the sizes of the intervals of monotonicity for $T_a^p:[0,1]\to[0,1]$ are larger than $\kappa$. 
This and the fact that $x\mapsto|(T_a^p)'(x)|^{-1}$ is $\alpha$-H\"older continuous on 
these monotonicity intervals imply that
there is an integer $k$ and a constant $0<\kappa'\le\kappa$ so that, 
for each $a\in[0,\epsilon]$, there is a refinement $\{I_1(a),I_2(a),...,I_k(a)\}$ of the partition of $[0,1]$ 
into monotonicity intervals of $T_a^p$ so that, for all $1\le j\le k$, we have $\kappa'\le|I_j(a)|\le2\kappa'$ and
$$
\sup_{\begin{subarray}{c}b_0<b_1<....<b_n\\ b_0,...,b_n\in I_j(a)\end{subarray}}
\sum_{i=1}^n\Big(\Big||(T_a^p)'(b_{i+1})|^{-1}-|(T_a^p)'(b_{i})|^{-1}\Big|^{1/\alpha}\Big)^\alpha
< M\,.
$$
By this choice of $\{I_1(a),...,I_k(a)\}$, we easily see that properties $(16)$ and $(17)$ in the proof of 
\cite[Lemma~3.1]{keller} are satisfied. Further, setting $A=\kappa'\delta/(16+2\delta)$ 
corresponds to (17) in \cite{keller}. 
The remaining part of the proof of \cite[Lemma~3.1]{keller} 
immediately shows then that the constant $K$ therein only depends on the constants 
$M$, $\delta$, and $\kappa'$ which are by construction independent on $a\in[0,\epsilon]$.

It is left to verify condition~(III). Let $h_a=d\mu_a/dm$ denote the density of 
the acip for $T_a$. 
We show first that there is a positive lower bound of $h_a$ on its support which is 
uniform in $a$ close to $0$, i.e., there exists a constant $H<\infty$ so that 
\begin{equation}
\label{eq.habelow}
\essinf_{x\in K(a)}h_a(x)\ge H^{-1}\,,\qquad\text{for all $a$ close to $0$}.
\end{equation}
We claim that there exist $\epsilon>0$ and an integer $N\ge1$ so that, for all $a\in[0,\epsilon]$, 
there is an interval $I\subset K(a)$ of length $1/N$ so that $\essinf_{x\in I}h_a(x)\ge1/2$. 
We show this claim by contradiction.
By condition~(II) (see \eqref{eq.ha} below), 
we find constants $\epsilon>0$ and $C$ so 
that, for all $a\in[0,\epsilon]$, we have the bound $\|h_a\|_\alpha\le C$. 
For $N\ge1$, 
divide the unit interval into $N$ disjoint intervals $I_1,...,I_N$ of length $1/N$. 
For $1\le\ell\le N$, let $M_\ell(a)$ and $m_\ell(a)$ 
denote the essential supremum and the essential infimum of $h_a$ on $I_\ell$, respectively. 
Since $1=\int_0^1h_a(x)dx\le\sum_{\ell=1}^NM_\ell(a)/N$, we get $\sum_{\ell=1}^NM_\ell(a)\ge1$. 
Now, if the claim was not true, we find $a\in[0,\epsilon]$ so that $m_\ell(a)\le1/2$, for all 
$1\le\ell\le N$. From this we deduce
\begin{align*}
1/2&=1-1/2\le\sum_{\ell=1}^N(M_\ell(a)-m_\ell(a))/N\le\int_0^1\osc(h,1/N,x)dx\\
&\le\|h_a\|_\alpha/N^\alpha\le C/N^\alpha\,.
\end{align*}
Since the right hand side tends to zero for $N\to\infty$ we get a contradiction. 
Henceforth, fix $\epsilon>0$ and 
$N\ge1$ so that the just proven claim holds and, for $a\in[0,\epsilon]$, 
let $I(a)$ be the interval of length $1/N$ so that $\essinf_{x\in I(a)}h_a\ge1/2$.
We turn to the proof of \eqref{eq.habelow}.
By the expansion of $T_a$, it follows that there exists an integer $0\le k_0\le\ln N/\ln\la$
such that $c_a\in T_a^{k_0}(I(a))$. 
Let $0<\epsilon'\le\epsilon$, be so that $T_a$ is mixing for all $a\in[0,\epsilon']$ 
(this is possible by condition~(II); see the beginning of the proof of Proposition~\ref{p.udc}). 
Note that $T_a$ mixing implies that the support $K(a)$ of the acip is equal to 
$[T^2_a(c_a),T_a(c_a)]$. From this we derive that property~(iii) in Section~\ref{s.uniform} 
is satisfied.
By \cite{wagner} and since $T_a$ is mixing, 
we have that $T_a:K(a)\to K(a)$ is exact, i.e., for each set $S\subset K(a)$
of positive Lebesgue measure it follows that $\lim_{j\to\infty}|K(a)\setminus T_a^j(S)|=0$.
Observe that, since $T_a$ is a tent map, 
if $J$ is an interval of length close to $K(a)(=[T^2_a(c_a),T_a(c_a)])$ 
then we have $T_a^2(K(a))=K(a)$.
Thus, exactness implies that there is an integer $k_1$ such that 
$T_a^{k_1}([c_a-1/2N,c_a])=T_a^k([c_a,c_a+1/2N])=K(a)$.
Since the image of an interval by $T_a^j$, $j\ge1$, 
changes continuously in $a$ 
we can choose the integer $k_1$ 
independently on $a\in[0,\epsilon]$. Hence,
we conclude that $T_a^{k_0+k_1}(I(a))=K(a)$, for all $a\in[0,\epsilon]$. 
Using the equality 
$$
h_a(x)=\sum_{T_a^{k_0+k_1}(x)=y}\frac{h_a(y)}{|(T_a^{k_1+k_2})'(y)|}\,,\qquad\text{for a.e. }x\,,
$$
the desired property \eqref{eq.habelow} follows.

Let $\epsilon>0$ be the constant in Lemma~\ref{l.transversality}.
It is shown in \cite[Section~6.3]{s2} that 
there exists $0<\epsilon'\le\epsilon$ so that 
without loss of generality (otherwise inverse the order) if 
$0\le a_1\le a_2\le\epsilon'$ then for all $\om_1\in\PP_n(a_1)$, $n\ge1$, there exists 
(exactly) one $\om_2\in\PP_n(a_2)$ so that $\om_1$ and $\om_2$ have the same 
combinatorics up to the iteration $n-1$. 
In order to apply the distortion estimate \eqref{eq.distortion2} below, we divide the 
interval $[0,\epsilon']$ into smaller intervals.
For $n\ge1$, let $\II_n$ be a 
partition of $[0,\epsilon']$ into 
intervals $I$ of length approximately equal to $\epsilon'/n^{1/\alpha}$. 
For $I\in\II$, let $a_I$ denote the right boundary point of $I$. 
By the proof of Lemma~\ref{l.transversality}, it immediately follows that each two disjoint 
elements in $\PP_n|[0,\epsilon']$ have different combinatorics up to $n-1$. 
Hence, for $I\in\II$, there exists an injective map from $\PP_n|I$ to $\PP_n(a_I)$ 
which maps each element in $\PP_n|I$ to the element in $\PP_n(a_I)$ with the 
same combinatorics up to $n-1$.
Using Lemma~\ref{l.transversality} 
and the distortion estimate 
\eqref{eq.distortion2} below, we derive 
$$
\sum_{\om_1\in\PP_n|I}\frac1{\|x_n'|_{\om_1}\|_\infty}
\le C\sum_{\om_2\in\PP(a_I)}\frac1{\|(T_{a_I}^n)'|_{\om_2}\|_\infty}\le C^2n\,,
$$
where the last inequality follows by \eqref{eq.habelow}, \eqref{eq.fix}, and \eqref{eq.fixx} 
(\eqref{eq.habelow} guarantees that the constant $C$ does not depend on $a$). 
Now, we can sum over the intervals in $\II_n$ which concludes the verification of 
condition~(III) (where 
the right hand side in \eqref{eq.condII} increases in this setting like $n^{1+1/\alpha}$).
\end{proof}

Instead of taking the turning points $c_a$ as the points of interest 
we can choose arbitrary points $x_0(a)\in[0,1]$, as long as the transversality 
condition~(I) is satisfied. 
However, in order to verify condition~(III), we will still assume that the family itself is
transversal at $T_0$. (It is quite likely that with some more work this assumption can be 
dropped.)

\begin{theorem}
\label{t.tent}
Assume that the family $T_a$ is transversal at $T_0$. 
Further, assume that $T_0$ is mixing and that the turning point $c_0$ is either not periodic or if 
$p$ is its period then \eqref{eq.period} is satisfied.  
Let $x_0:[0,1]\to[0,1]$ be a $C^{1+\alpha}$ map so that the transversality 
condition~(I) is satisfied. 
If $\varphi\in V_\alpha$
so that $\sigma_0(\varphi)>0$, then there exists $\epsilon>0$ 
such that for almost every $a\in[0,\epsilon]$ the point $x_0(a)$ satisfies the LIL 
for the function $\varphi$ under the map $T_a$.
\end{theorem}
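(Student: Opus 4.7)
The plan is to verify the three hypotheses of Theorem~\ref{t.main} for the family $T_a$ together with the map $x_0$, and then invoke Corollary~\ref{c.lil} to pass from the resulting ASIP to the LIL. Condition~(I) is already assumed, so the work lies in conditions~(II) and~(III); I will argue that both verifications performed in the proof of Theorem~\ref{t.tenttransversal} go through essentially unchanged, since neither really used the specific choice $x_0(a)=T_a^{j_0}(c_a)$.

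For condition~(II), the entire verification in the proof of Theorem~\ref{t.tenttransversal} is a statement about the family alone: it combines Keller's Lasota--Yorke inequality \cite[Theorem~3.2]{keller} with the observation that the constants $K$ and $A$ appearing in \cite[Lemma~3.1]{keller} can be chosen uniformly in a neighbourhood of $a=0$, relying only on mixing of $T_0$ and the period hypothesis~\eqref{eq.period}. Both remain true here, so condition~(II) holds and, as a by-product, property~(iii) of Section~\ref{s.uniform} follows from $K(a)=[T_a^2(c_a),T_a(c_a)]$.

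The main step is condition~(III). First I reproduce the uniform lower density bound $\essinf_{x\in K(a)} h_a(x)\ge H^{-1}$ for $a$ close to $0$, exactly as in the proof of Theorem~\ref{t.tenttransversal}: uniform control of $\|h_a\|_\alpha$ from condition~(II), a pigeonhole argument producing an interval $I(a)\subset K(a)$ of definite size on which $h_a\ge 1/2$, and exactness of $T_a$ on $K(a)$ to spread this lower bound to all of $K(a)$; none of this depends on $x_0$. Next, for a suitable $\epsilon'>0$, I partition $[0,\epsilon']$ into subintervals $\II_n$ of length $\sim\epsilon'/n^{1/\alpha}$, so that on each $I\in\II_n$ the distortion estimate \eqref{eq.distortion2} is applicable. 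The crucial combinatorial step is to construct, as in \cite[Section~6.3]{s2}, an injection $\PP_n|I\hookrightarrow\PP_n(a_I)$ that matches itineraries, where $a_I$ is the right endpoint of $I$. What really drives this injection is the \emph{family's} transversality at $T_0$ (which governs how the dynamical partitions $\PP_n(a)$ depend on $a$) together with the lower bound $|x_n'(a)|>L$ on the Lipschitz constant of the boundary points; the latter follows from Lemma~\ref{l.transversality} after enlarging $j_0$ in condition~(I). Combining the injection with Lemma~\ref{l.transversality}, the distortion estimate \eqref{eq.distortion2}, and the lower density bound then yields
$$
\sum_{\om_1\in\PP_n|I}\frac1{\|x_n'|_{\om_1}\|_\infty}
\le C\sum_{\om_2\in\PP_n(a_I)}\frac1{\|(T_{a_I}^n)'|_{\om_2}\|_\infty}\le C'n,
$$
and summing over the $O(n^{1/\alpha})$ intervals $I\in\II_n$ produces the total bound $O(n^{1+1/\alpha})$, well within the tolerance $Ce^{n^{\delta_0}}$ of \eqref{eq.condII}.

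The only non-trivial point is the combinatorial matching: the arguments of \cite[Section~6.3]{s2} are phrased for $x_0(a)=T_a^{j_0}(c_a)$, so I must check that the reasoning truly hinges only on the motion of the boundary points $b_k(a)$ and the forward images of the turning point with $a$, together with a quantitative expansion-type bound on $x_n'$, rather than on $x_0$ itself being an iterate of $c_a$; all of these ingredients are in place here. Once this transfer is verified, conditions~(I)--(III) all hold for the pair $(T_a,x_0)$, so Theorem~\ref{t.main} yields the ASIP for the process $\xi_i$ defined in \eqref{eq.xigen}, and Corollary~\ref{c.lil} then delivers the LIL claimed in Theorem~\ref{t.tent}.
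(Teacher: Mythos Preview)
Your approach is the same as the paper's: recycle the verifications of conditions~(II) and~(III) from the proof of Theorem~\ref{t.tenttransversal}, then apply Theorem~\ref{t.main} and Corollary~\ref{c.lil}. You are right that neither verification depends on the particular choice of $x_0$; in particular the combinatorial injection $\PP_n|I\hookrightarrow\PP_n(a_I)$ rests on the monotonicity statement from \cite[Section~6.3]{s2} about the \emph{phase-space} partitions $\PP_n(a)$, which is a property of the transversal family and not of $x_0$.

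There is, however, one step you omit. Theorem~\ref{t.main} requires that $x_0$ satisfy property~\eqref{eq.convenient}, i.e., $x_0(a)\in K(a)\setminus\{b_0(a),\ldots,b_{p_0}(a)\}$ for all $a\in(0,1)$. Theorem~\ref{t.tent} assumes only condition~(I) for $x_0$, and the paper explicitly notes that \eqref{eq.convenient} is \emph{not} assumed. The paper fills this gap as follows: condition~(II) forces $T_a$ to be mixing for $a$ near $0$, so $K(a)=[T_a^2(c_a),T_a(c_a)]$; since every point of $(0,1)$ is eventually mapped into this interval, condition~(I) (which prevents $x_j(a)$ from being stuck at a partition point) yields an iterate $k\ge0$ and $\epsilon''>0$ with $x_k(a)\in K(a)\setminus\{0,c_a,1\}$ for all $a\in[0,\epsilon'']$. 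One then renames $x_k$ as $x_0$ and proceeds. Without this reduction the hypotheses of Theorem~\ref{t.main} are not in place, so you should include it.
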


\begin{proof}
Condition~(II) for the family $T_a$ is already verified in the proof of Theorem~\ref{t.tenttransversal}. 

Observe that in Theorem~\ref{t.tent} we do not assume that $x_0$ satisfies \eqref{eq.convenient}.
However, we can make the following reasoning. 
Observe that, for all $a\in[0,1]$, all points in $(0,1)$ are mapped after a finite number of iteration into 
$[T_a^2(c_a),T_a(c_a)]$.
As explained in the beginning of the 
proof of Proposition~\ref{p.udc} below, the fact that condition~(II) is satisfied gives a constant 
$0<\epsilon'\le\epsilon$ so that $T_a$ is mixing for all $a\in[0,\epsilon']$. 
Hence, $K(a)=[T_a^2(c_a),T_a(c_a)]$, for all $a\in[0,\epsilon']$. 
Since condition~(I) is satisfied, we find $0<\epsilon''\le\epsilon'$ and an iteration $k\ge0$ 
so that $x_k(a)\in[T_a^2(c_a),T_a(c_a)]\setminus\{0,c_a,1\}$, for all $a\in[0,\epsilon'']$.
Hence, renaming $x_k$ by $x_0$ (and considering the smaller interval $[0,\epsilon'']$),
without loss of generality, we can assume in the 
remaining part of this proof that $x_0$ satisfies \eqref{eq.convenient}. 

Regarding condition~(III) we note that property \eqref{eq.habelow} also holds in the setting 
of Theorem~\ref{t.tent}. Then we can follow word by word the last paragraph in 
the proof of Theorem~\ref{t.tenttransversal} which concludes the verification of 
condition~(III).
\end{proof}

\subsection{Generalised $\beta$-transformations and Markov partition preserving families}
First we consider a generalised form of $\beta$-transformations. 
Let $T:[0,\infty)\to[0,1]$ be piecewise $C^{1+\alpha}$, $0<\alpha\le1$, 
and $0=b_0<b_1<...$ be the associated partition, 
where $b_k\to\infty$ as $k\to\infty$. We assume that
$T$ is right continuous and $T(b_k)=0$, for each $k\ge0$.
Further, for each $a>1$, we have $\|T'(a\, \cdot)^{-1}\|_{L^\infty([0,1])}<1$ and 
$\|T'(a\, \cdot)\|_{L^\infty([0,1])}<\infty$. 
For $a_0>1$, we define the one-parameter family $T_a:[0,1]\to[0,1]$, $a\in[0,1]$, by 
$T_a(x)=T((a_0+a)x)$. 
It is shown in \cite[Lemma 5.1]{s2} that each $T_a$ admits a unique acip $\mu_a$ 
whose support $K(a)$ is an interval adjacent to $0$. Further, the length of K(a) is an 
increasing, piecewise 
constant function in $a$ where the discontinuities are isolated point.
Let $\lambda(a)=\essinf_{x\in[0,1]}|T_a'(x)|$. 
Regarding the verification of condition~(II), we make sure 
that a similar condition as in \eqref{eq.period} is satisfied: 
We assume that $b_j/a_0\neq1$, for all $j\ge0$, and there exists $p\ge1$ such that 
\begin{equation}
\label{eq.periodbeta}
\lambda(a_0)^{\alpha p}>2\,,\ \  \text{and }\ T^i(b_j-)\neq b_k/a_0\,.
\end{equation}
for all $1\le i\le p-1$ and $k\ge 1$. 
Furthermore, we assume that $|K(a)|$ is constant in a neighbourhood of $a=0$.

\begin{theorem}
\label{t.beta}
Let $x_0:[0,1]\to[0,1]$ be a $C^{1+\alpha}$ map satisfying condition (I).
If $\varphi\in V_\alpha$  
so that $\sigma_0(\varphi)>0$, then there exists $\epsilon>0$ 
such that for almost every $a\in[0,\epsilon]$ the 
turning point $x_0(a)$ satisfies the LIL 
for the function $\varphi$ under the map $T_a$.
\end{theorem}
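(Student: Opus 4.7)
The plan is to verify conditions~(I)--(III) of Section~\ref{s.uniform} (together with properties~(i)--(iii) and property~\eqref{eq.convenient}) and then invoke Theorem~\ref{t.main} and Corollary~\ref{c.lil}. Condition~(I) is part of the hypothesis, so the bulk of the work is to establish~(II) and~(III), by adapting the strategy used in the proofs of Theorems~\ref{t.tenttransversal} and~\ref{t.tent}. Properties~(i) and~(ii) follow from the regularity of $T$ and the fact that the partition points of $T_a$ are $x = b_j/(a_0+a)$, which are Lipschitz in $a$. Property~(iii) follows from the assumption that $|K(a)|$ is constant on a neighborhood of $0$ combined with the H\"older dependence of the endpoints of $K(a)$.

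For condition~(II), I would imitate the verification in the proof of Theorem~\ref{t.tenttransversal}: apply \cite[Theorem~3.2]{keller} to the iterate $T_a^p$, where $p\ge1$ is the integer in~\eqref{eq.periodbeta}, so that $\lambda(a_0)^{\alpha p}>2$ yields $\rho<1$ in the corresponding inequality~\eqref{eq.lasota}. The key point is to show that the constants $K$ and $A$ in \cite[Lemma~3.1]{keller} can be chosen uniformly in $a$ near $0$. The conditions $b_j/a_0\neq1$ and $T^i(b_j-)\neq b_k/a_0$ for $1\le i\le p-1$ guarantee, by continuity, that on some interval $[0,\epsilon]$ the monotonicity intervals of $T_a^p:[0,1]\to[0,1]$ admit a uniform positive lower bound on their lengths, and then the standard refinement argument in~\cite{keller} produces the required uniform partition into subintervals of controlled H\"older variation. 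Combined with the mixing of $T_0$ (which needs a brief check using that $|K(a)|$ is locally constant), this gives~(II).

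Property~\eqref{eq.convenient} is handled by the renaming trick used in the proof of Theorem~\ref{t.tent}: since $|K(a)|$ is constant near $a=0$ and (II) combined with transversality gives mixing of $T_a$ on $[0,\epsilon']$, one can find a finite iterate $k\ge0$ such that $x_k(a)\in\inte K(a)\setminus\{b_0(a),\dots,b_{p_0}(a)\}$ for all $a$ in a small interval, and then one relabels $x_k$ as $x_0$. For condition~(III), I would follow the last paragraph of the proof of Theorem~\ref{t.tenttransversal} step by step. First, obtain a uniform positive lower bound $\essinf_{x\in K(a)}h_a(x)\ge H^{-1}$ on a neighborhood of $0$, using that $\|h_a\|_\alpha$ is uniformly bounded by~(II) together with the exactness of $T_a$ on $K(a)$ (itself following from topological mixing and the Rokhlin-type argument of~\cite{wagner}). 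Second, split $[0,\epsilon']$ into the sub-intervals $\II_n$ of length $\sim\epsilon'/n^{1/\alpha}$, so that the distortion estimate~\eqref{eq.distortion2} applies on each $I\in\II_n$; on each such $I$ one embeds $\PP_n|I$ injectively into $\PP_n(a_I)$ using the combinatorial labelling of~\cite[Section~6.3]{s2}. Third, apply Lemma~\ref{l.transversality} to transfer the parameter-side sum to the dynamical-side sum, which by the fixed-point identity $\LL_{a_I}h_{a_I}=h_{a_I}$ and \eqref{eq.fix}--\eqref{eq.fixx} is $O(n)$. Summing over $I\in\II_n$ yields the bound $O(n^{1+1/\alpha})$, verifying~\eqref{eq.condII}.

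The main obstacle is the combinatorial comparison between $\PP_n|I$ and $\PP_n(a_I)$, which is delicate because the $\beta$-type family here can have many monotonicity intervals (unlike in the unimodal tent case treated in~\cite[Section~6.3]{s2}). Verifying that elements of $\PP_n|I$ with distinct itineraries up to time $n-1$ correspond bijectively to elements of $\PP_n(a_I)$ with the same itineraries requires the uniform transversality estimate from Lemma~\ref{l.transversality} together with the Lipschitz dependence of the partition points $b_j/(a_0+a)$. A secondary technical point is the uniform positive lower bound on $h_a$, for which the exactness argument used for tent maps must be adapted, but since $K(a)$ is an interval adjacent to $0$ whose length is locally constant this should go through essentially verbatim.
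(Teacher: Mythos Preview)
Your outline is correct and follows the paper's own route: verify (I)--(III) (plus \eqref{eq.convenient} via the relabelling trick from Theorem~\ref{t.tent}), then apply Theorem~\ref{t.main} and Corollary~\ref{c.lil}. The verification of~(II) via \eqref{eq.periodbeta} and Keller's Lemma~3.1, and of~(III) via the uniform lower bound on $h_a$ together with the $\II_n$-splitting and the fixed-point estimate \eqref{eq.fix}--\eqref{eq.fixx}, matches the paper exactly.

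There is one point where you overshoot. You flag the combinatorial embedding of $\PP_n|I$ into $\PP_n(a_I)$ as the ``main obstacle'' and invoke \cite[Section~6.3]{s2}, worrying about the many monotonicity branches. In fact for this particular family the comparison is \emph{easier} than in the tent case: since $T_a(x)=T((a_0+a)x)$, increasing $a$ only stretches the argument, so every itinerary realised under $T_{a_1}$ is automatically realised under $T_{a_2}$ whenever $a_1\le a_2$. The paper simply records this as an immediate consequence of the construction (``by the construction of the family $T_a$, if $0\le a_1\le a_2\le1$ then for all $\om_1\in\PP_n(a_1)$ there exists $\om_2\in\PP_n(a_2)$ with the same combinatorics''), with no appeal to \cite[Section~6.3]{s2}. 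So what you identify as the delicate step is actually the trivial one here.

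Two further minor discrepancies: the paper takes mixing of $T_0$ from \cite[Section~5.2]{s2} rather than rechecking it, and for the uniform lower bound on $h_a$ it cites \cite[inequality~(30)]{s2} directly (noting that the assumption that $|K(a)|$ is locally constant excludes the only problematic case), instead of redoing the exactness argument from the tent proof. Your proposed adaptations would also work, but the paper's shortcuts are available.
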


We continue with one-parameter families preserving a Markov structure. 
Assume that we have a one-parameter family $T_a:[0,1]\to[0,1]$, $a\in[0,1]$, as described 
in the beginning of 
Section~\ref{s.uniform} with a partition $0\equiv b_0(a)<b_1(a)<...<b_{p_0}(a)\equiv1$ and 
satisfying properties (i)-(iii). We require additionally that the family 
$T_a$ fulfils the following Markov property. Set $B_k(a)=(b_{k-1}(a),b_k(a))$, $1\le k\le p_0$.

\begin{itemize}
\item[(M)]
For each $1\le k\le p_0$ the image $T_a(B_k(a))$, $a\in[0,1]$, is a union 
of monotonicity intervals $B_\ell(a)$, $1\le \ell\le p_0$ 
(modulo a finite number of points).
\end{itemize}

\begin{theorem}
\label{t.markov}
Let $T_a$ be a family satisfying the Markov property (M) and 
let $x_0:[0,1]\to[0,1]$ be a $C^{1+\alpha}$ map satisfying condition (I). 
If $\varphi\in V_\alpha$  
so that $\sigma_0(\varphi)>0$, then there exists $\epsilon>0$ 
such that for almost every $a\in[0,\epsilon]$ the 
turning point $x_0(a)$ satisfies the LIL 
for the function $\varphi$ under the map $T_a$.
\end{theorem}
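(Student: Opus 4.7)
The plan is to apply Theorem~\ref{t.main}. Condition~(I) is part of the hypotheses, and as in the proof of Theorem~\ref{t.tent} the technical requirement \eqref{eq.convenient} can be arranged after passing to a smaller interval around $a=0$ and, if necessary, replacing $x_0$ by a suitable iterate $x_k$. We tacitly assume that $T_0$ is mixing on $K(0)$, since this is required by condition~(II). What remains is to verify the uniform Lasota--Yorke inequality~(II) and the sum estimate~(III); both simplify considerably under the Markov assumption~(M).

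For condition~(II), I would follow the strategy of the proof of Theorem~\ref{t.tenttransversal}. Applying \cite[Theorem~3.2]{keller} to the iterate $T_a^p$ for $p$ large enough that $(2+\delta)/\la^{\alpha p}<1$ (possible by~\eqref{eq.la}, uniformly in $a$), the task is to choose the constants $K$ and $A$ in \cite[Lemma~3.1]{keller} uniformly for $a$ near~$0$. The Markov property~(M) is tailor-made for this: by inductive application of~(M), the monotonicity intervals of $T_a^p$ are the connected components of $K(a)\setminus\bigcup_{i=0}^{p-1}T_a^{-i}(\{b_0(a),\dots,b_{p_0}(a)\})$, and by properties~(i) and~(iii) these components vary continuously in $a$ with a uniform positive lower bound on their length. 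This allows an $a$-uniform refinement of the monotonicity partition satisfying the H\"older oscillation bound of \cite[Lemma~3.1]{keller}, yielding~\eqref{eq.uniformLY} on a neighbourhood of~$0$.

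For condition~(III), the crucial observation is that, by~(M) applied inductively, for every $\om\in\PP_n(a)$ the image $T_a^n(\om)$ is a single monotonicity interval $D_\ell(a)$: indeed, since~(M) forces $T_a(\{b_0(a),\dots,b_{p_0}(a)\})\subset\{b_0(a),\dots,b_{p_0}(a)\}$, the boundary of $T_a^n(\om)$ consists of partition points, while the interior contains no such point by maximality of $\om$. By property~(i) together with continuity of the $D_\ell(a)$'s, $|T_a^n(\om)|$ is bounded below by some $\delta_1>0$ uniformly in $n$ and $a$ near~$0$. Combined with the standard distortion estimate (Lemma~\ref{l.distortion} and \eqref{eq.distortion2}), this yields
\begin{equation*}
\sum_{\om\in\PP_n(a)}\frac{1}{\|(T_a^n)'|_\om\|_\infty}
\;\le\;\frac{C}{\delta_1}\sum_{\om\in\PP_n(a)}|\om|
\;\le\;\frac{C}{\delta_1}\,,
\end{equation*}
uniformly in $n$ and $a$. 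One then transfers this dynamical bound to the parameter side exactly as in the final paragraph of the proof of Theorem~\ref{t.tenttransversal}: partition $[0,\epsilon]$ into sub-intervals $I\in\II_n$ of length $\approx\epsilon/n^{1/\alpha}$, apply Lemma~\ref{l.transversality} together with distortion to pass from $\|x_n'|_{\om_1}\|_\infty$ to $\|(T_{a_I}^n)'|_{\om_2}\|_\infty$ via an injection $\PP_n|I\hookrightarrow\PP_n(a_I)$, and sum over $I\in\II_n$. The result is a bound of order $n^{1/\alpha}$ for the left-hand side of~\eqref{eq.condII}, far better than the $e^{n^{\delta_0}}$ required.

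The only step requiring genuine attention is the combinatorial injection $\PP_n|I\hookrightarrow\PP_n(a_I)$, which in \cite[Section~6.3]{s2} depends on delicate tent-map combinatorics. Here the Markov structure makes matters essentially automatic: each $\om\in\PP_n|I$ determines an itinerary $(\ell_0,\dots,\ell_{n-1})$ in the Markov alphabet $\{1,\dots,p_0\}$, which by~(M) is admissible for the common subshift of finite type governing all $T_a$ in a neighbourhood of~$0$, and $\om$ is sent to the unique cell of $\PP_n(a_I)$ with the same itinerary. Because the partition points $b_k(a)$ are Lipschitz in $a$ and the derivatives $|x_n'|$ are uniformly large on elements of $\PP_n|I$ (by Lemma~\ref{l.transversality}), distinct cells in $\PP_n|I$ cannot share an itinerary, so this map is well-defined and injective. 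With all three conditions verified, Theorem~\ref{t.main} and Corollary~\ref{c.lil} yield the LIL for $x_0(a)$ for almost every $a\in[0,\epsilon]$.
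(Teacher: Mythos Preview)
Your proposal is correct and follows precisely the route the paper intends (the paper explicitly leaves the Markov case as an exercise, remarking only that the Markov structure makes the verification of (II) and (III) much easier than for tent maps). One small correction: the claim that $T_a^n(\om)$ is a \emph{single} $D_\ell(a)$ because ``the interior contains no such point by maximality of $\om$'' is not right---maximality of $\om\in\PP_n(a)$ constrains $T_a^{n-1}(\om)$, not $T_a^n(\om)$ (e.g.\ for the doubling map $T^2((0,1/4))=(0,1)$ contains $1/2$); the inductive use of~(M) actually gives $T_a^{n-1}(\om)=D_\ell(a)$ exactly, hence $T_a^n(\om)=T_a(D_\ell(a))$ is a \emph{union} of $D_m(a)$'s, which still yields the uniform lower bound you need.
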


\begin{proof}[Proof of Theorems~\ref{t.beta} and \ref{t.markov}]
Due to the Markov structure, the proof of Theorem~\ref{t.markov} is much easier 
than the proofs of Theorem~\ref{t.tenttransversal}, \ref{t.tent}, and \ref{t.beta}. We 
leave it as an exercise to the reader. The proof of Theorem~\ref{t.beta} is very similar to 
the proof of Theorem~\ref{t.tenttransversal}. 
 Regarding property \eqref{eq.convenient} we can argue 
as in the proof of Theorem~\ref{t.tent}. The fact that $T_0$ is mixing is shown 
in the last paragraph in \cite[Section~5.2]{s2}. Property~\eqref{eq.periodbeta}, ensures that 
we can go word by word along the verification of condition~(II) in the proof of 
Theorem~\ref{t.tenttransversal}. Knowing that condition~(II) is satisfied ensures that 
$\sigma_a(\varphi)>0$ in an neighbourhood of $0$ (see Lemma~\ref{l.varregularity} below). 
It remains to verify condition~(III). Observe that, by the construction of the family $T_a$, 
if $0\le a_1\le a_2\le1$ then for all $\om_1\in\PP_n(a_1)$, $n\ge1$, there exists  
$\om_2\in\PP_n(a_2)$ so that $\om_1$ and $\om_2$ have the same 
combinatorics up to the iteration $n-1$. Hence, if we show that the densities are uniformly 
bounded below on their support (see \eqref{eq.habelow}), we can follow 
the last paragraph in the proof of Theorem~\ref{t.tenttransversal} which concludes the 
verification of condition~(III). The only obstacle in showing \eqref{eq.habelow} might be the case 
when $K(0)$ is smaller than $K(a)$ but this case is excluded by our assumption 
on the family $T_a$. The proof of \eqref{eq.habelow} 
in a neighbourhood of $a=0$ is done in detail in \cite[inequality~(30)]{s2}. 
\end{proof}

\subsection{Almost sure typicality}
Let $T_a:[0,1]\to[0,1]$, $a\in[0,1]$, be a one-parameter family of 
piecewise expanding maps as described in Section~\ref{s.uniform} and satisfying 
properties~(i)-(iii) therein. Let $x_0:[0,1]\to[0,1]$ be a $C^{1+\alpha}$ map 
satisfying \eqref{eq.convenient}. As above let $h_a$ denote the density of $\mu_a$.
As a corollary of Theorem~\ref{t.main} we get the following typicality result. 
Recall the definition of typical in \eqref{eq.typical}.

\begin{theorem}
\label{t.typical}
If conditions~(I)-(III) are satisfied and if there exists $\epsilon>0$ and a constant $C$ 
so that 
\begin{equation}
\label{eq.densbelow}
\essinf_{x\in K(a)}h_a(x)\ge C^{-1}\,,\qquad\forall a\in[0,\epsilon]\,,
\end{equation}
then there exists $0<\epsilon'\le\epsilon$ so that 
$x_0(a)$ is typical for $\mu_a$ for a.e. $a\in[0,\epsilon']$. 
\end{theorem}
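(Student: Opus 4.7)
The plan is to deduce typicality from the law of iterated logarithm furnished by Theorem~\ref{t.main} (via Corollary~\ref{c.lil}), combined with a standard density argument in $C^0([0,1])$. I would first fix a countable collection $\{\varphi_k\}_{k\ge 1}$ of Lipschitz functions dense in $C^0([0,1])$ with respect to $\|\cdot\|_\infty$ (each $\varphi_k$ then lies in $V_\alpha$), together with one Lipschitz function $\psi$ satisfying $\sigma_0(\psi)>0$; such $\psi$ exists because $T_0$ is mixing piecewise expanding, so $\sigma_0$ does not vanish identically on $V_\alpha$ modulo constants.

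For each $k$, put $\varphi_k^\ast=\varphi_k$ if $\sigma_0(\varphi_k)>0$, and $\varphi_k^\ast=\varphi_k+\psi$ otherwise; in the latter case the subadditivity of the seminorm $\sigma_0$ gives $\sigma_0(\varphi_k^\ast)=\sigma_0(\psi)>0$. I would then apply Theorem~\ref{t.main} to every $\varphi_k^\ast$ and to $\psi$. An inspection of its proof shows that the furnished $\epsilon'>0$ depends on the family $T_a$ and on $x_0$ through conditions~(I)--(III), while the observable enters only via the requirement $\sigma_0(\cdot)>0$ and via bounds on $\|\cdot\|_\alpha$; both are uniform over the countable collection $\{\psi\}\cup\{\varphi_k^\ast\}_{k\ge1}$, so a single $\epsilon'$ works for all of them. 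By Corollary~\ref{c.lil} the ASIP implies the LIL, and since $\sqrt{2n\log\log n}/n\to 0$ this yields, for each $k$, a null set $N_k\subset[0,\epsilon']$ such that
$$
\frac1n\sum_{i=1}^n\bigl(\varphi_k(T_a^i(x_0(a)))-\textstyle\int\varphi_k\,d\mu_a\bigr)\longrightarrow 0,\qquad a\in[0,\epsilon']\setminus N_k,
$$
obtained, when $\varphi_k^\ast\ne\varphi_k$, by subtracting the SLLN for $\psi$ from that for $\varphi_k+\psi$.

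Set $N=\bigcup_k N_k$, a null set. For $a\in[0,\epsilon']\setminus N$ and any $\varphi\in C^0([0,1])$, choose $\varphi_k$ with $\|\varphi-\varphi_k\|_\infty<\delta$; combining $|\int\varphi\,d\mu_a-\int\varphi_k\,d\mu_a|\le\delta$ with the pointwise bound on Birkhoff averages gives
$$
\limsup_{n\to\infty}\Bigl|\frac1n\sum_{i=1}^n\varphi(T_a^i(x_0(a)))-\textstyle\int\varphi\,d\mu_a\Bigr|\le 2\delta,
$$
and letting $\delta\to0$ shows that $x_0(a)$ is typical for $\mu_a$. The main obstacle is the uniformity of $\epsilon'$ over the countable family of observables, and this is precisely where hypothesis~\eqref{eq.densbelow} plays its role: combined with Lemma~\ref{l.varregularity}, it ensures that $a\mapsto\sigma_a(\cdot)$ is continuous and that the constants entering the ASIP construction of Theorem~\ref{t.main} can be controlled uniformly in $a\in[0,\epsilon']$.
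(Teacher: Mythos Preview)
Your overall strategy---reduce typicality to the SLLN for a countable dense family of observables and obtain the SLLN from the LIL/ASIP of Theorem~\ref{t.main}---is the same as the paper's. The difference, and the gap, lies in how uniformity of $\epsilon'$ over the countable family is secured.

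You assert that the $\epsilon'$ produced by Theorem~\ref{t.main} can be taken independent of $k$ because ``the observable enters only via the requirement $\sigma_0(\cdot)>0$ and via bounds on $\|\cdot\|_\alpha$; both are uniform over the countable collection.'' Neither claim holds. First, a countable family of Lipschitz functions dense in $C^0([0,1])$ cannot have uniformly bounded Lipschitz constants, so $\sup_k\|\varphi_k^\ast\|_\alpha=\infty$. Second, and more importantly, the paper's own analysis (see the proof of Theorem~\ref{t.typical}) shows that the $\epsilon$ in Theorem~\ref{t.main} depends on the observable precisely through the length of the parameter interval on which $\sigma_a(\varphi)>0$. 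Your construction guarantees $\sigma_0(\varphi_k^\ast)>0$ for each $k$, but gives no uniform lower bound: if $\sigma_0(\varphi_k)>0$ is tiny while $\|\varphi_k\|_\alpha$ is large, the H\"older constant in Lemma~\ref{l.varregularity} is large and the interval $\{a:\sigma_a(\varphi_k)>0\}$ may be arbitrarily short. Nothing prevents $\epsilon_k\to0$.

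Your invocation of hypothesis~\eqref{eq.densbelow} does not close this gap: a lower bound on the invariant density says nothing about $\sigma_a(\varphi)$ for a generic Lipschitz $\varphi$ (indeed $\varphi$ may be a coboundary for some $T_a$). In the paper's proof, \eqref{eq.densbelow} is used in a very different and concrete way. The dense family is taken to be characteristic functions $\chi_B$ of short rational intervals; then \eqref{eq.densbelow} gives $\int\chi_B\,d\mu_a\ge C^{-1}|B|$, and combined with uniform exponential decay of correlations (Proposition~\ref{p.udc}) one computes directly that $\sigma_a(\chi_B)^2\ge |B|/(2C)$ for all $a\in[0,\delta]$ once the maximal radius of the intervals is chosen small enough. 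This yields the required uniform positivity of $\sigma_a$ and hence a single $\epsilon'$ valid for the whole countable family. To repair your argument you would need an analogous quantitative lower bound on $\sigma_a(\varphi_k^\ast)$ uniform in $k$, which your choice of observables does not provide.
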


\begin{proof}
For $\kappa>0$ small, let 
$$
\BB=\{(q-r,q+r)\cap[0,1]\mid q\in\rational,\, r\in\rational\cap[0,\kappa]\}\,.
$$
Observe that in order to prove 
Theorem~\ref{t.typical}, it is sufficient to 
show that there exists an $\epsilon'>0$ so that, for each $B\in\BB$, 
$x_0(a)$ satisfies the LIL for  $\chi_B$ under the map $T_a$, for a.e. $a\in[0,\epsilon']$. 
From the proof of Theorem~\ref{t.main}, we see that the constant $\epsilon$ in 
the assertion of Theorem~\ref{t.main} does only depend on the constant $\epsilon'$ in 
Proposition~\ref{p.udc} and the length of the interval of parameters $a$ on which 
$\sigma_a(\varphi)>0$. Since $\epsilon'$ in Proposition~\ref{p.udc} 
does only depend on the family $T_a$ and not on the 
observable $\varphi$, it is enough to show that there exists $\delta>0$ so that 
 $\sigma_a(\chi_B)>0$, for all $B\in\BB$ and all $a\in[0,\delta]$. 
By Proposition~\ref{p.udc} and \eqref{eq.ha} below, and \eqref{eq.inftyalpha}, we find $\delta>0$, 
$C$, and $0<\rho<1$ so that, for all $a\in[0,\delta]$,
we have $\|h_a\|_\infty\le C\|h_a\|_\alpha/2\le C$
and, for all $B\in\BB$ and $a\in[0,\delta]$, we have
\begin{align*}
\Big|\int\chi_B\chi_B\circ T_a^nd\mu_a-\Big(\int\chi_Bd\mu_a\Big)^2\Big|
&\le C\|\chi_Bh_a\|_\alpha\|\chi_B\|_{L^1}\rho^n\\
&\le C^2|B|\rho^n\,,\qquad\forall n\ge1\,,
\end{align*}
where in the last inequality we used also \eqref{eq.algebra}. Altogether, for $a\in[0,\delta]$, 
we derive
\begin{align*}
\sigma_a(\chi_B)^2&=\int\chi_Bd\mu_a-\Big(\int\chi_Bd\mu_a\Big)^2
+2\sum_{n\ge1}\int\chi_B\chi_B\circ T_a^nd\mu_a-\Big(\int\chi_Bd\mu_a\Big)^2\\
&\ge C^{-1}|B|-2NC^2|B|^2-2\sum_{n\ge N}C^2|B|\rho^n\,,\qquad\forall N\ge1\,.
\end{align*}
Now, by taking $\kappa>0$ in the definition of $\BB$ sufficiently small, we can choose 
$N$ so that $\sigma_a(\chi_B)^2\ge |B|/2C$, for all $B\in\BB$ and all $a\in[0,\epsilon]$. 
This concludes the proof of Theorem~\ref{t.typical}.
\end{proof}

\begin{remark}
The question of typicality of a point $x_0(a)$ for almost every parameter $a$ in 
a general setting, was already studied 
in \cite{s2} (see also \cite{bruin}, \cite{fp}, and \cite{schmeling} for more specific cases). 
Theorem~\ref{t.typical} provides some alternative conditions.
The method in \cite{s2} is inspired by a technique developed in \cite{BC1} (see also 
\cite{bjs} for another application of this technique).
 This method is very different from the one used in the present paper. 
\end{remark}


\section{Preliminaries regarding the proof of Theorem~\ref{t.main}}
\label{s.prel}
In this section, we fix an $\epsilon>0$ 
which is at least so small as in Lemma~\ref{l.transversality} and 
conditions~(II) and (III). When the meaning is clear, 
we will write $\PP_j$ instead of $\PP_j|[0,\epsilon]$.

We start with an elementary but important 
statement about the size of exceptionally small partition elements. 
Since we are far away from having Markov partitions, the 
image $x_j(\om)$ of a partition element $\om$ in $\PP_j$ might be very small 
(despite the expansion of the map $x_j:\om\to[0,1]$). If this image is too small it contains not 
sufficient information in order to use it in our analysis. From condition~(III) we can derive a good 
control of the total size of partition elements having too small images for our purpose. 

\begin{lemma}
\label{l.goodpartition}
Assume that condition~(III) is satisfied.
Let $d_j>0$, $j\ge1$, be a sequence decaying at least stretched exponentially fast, i.e., 
there exists $\delta>0$ so that 
\begin{equation}
\label{eq.stretched}
\lim_{j\to\infty}d_j/e^{-j^\delta}<\infty\,.
\end{equation}
There exists a constant $C$ such that, for all $j\ge1$, 
the size of the exceptional set $E_j:=\{\om\in\PP_j\mid |x_j(\om)|\le d_j\}\subset\PP_j$,
has the upper bound
$$
\Big|\bigcup_{\om\in E_j}\om\Big|\le Cd_j^{1/2}\,.
$$
\end{lemma}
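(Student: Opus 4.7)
The plan is to combine a bounded distortion estimate for $x_j$ on each element of $\PP_j$ with the summability statement in condition~(III), using that $d_j$ beats any $e^{j^{\delta_0}}$ factor with $\delta_0 < \delta$.

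First I would express $|\omega|$ in terms of $|x_j(\omega)|$ and $\|x_j|_\omega'\|_\infty$ via distortion. By the mean value theorem on $\omega$, there is $a^*\in\omega$ with $|x_j(\omega)| = |x_j'(a^*)|\,|\omega|$. A bounded distortion estimate for $x_j'$ on the partition element $\omega$ (the invoked Lemma~\ref{l.distortion}) yields a uniform constant $c>0$ such that $|x_j'(a^*)|\ge c\,\|x_j|_\omega'\|_\infty$ for every $\omega\in\PP_j$ and every $j\ge 1$. Combining the two gives
$$
|\omega|\;\le\;\frac{|x_j(\omega)|}{c\,\|x_j|_\omega'\|_\infty}.
$$

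Next I would specialize to $\omega\in E_j$, where $|x_j(\omega)|\le d_j$, and sum:
$$
\Big|\bigcup_{\om\in E_j}\om\Big|
\;=\;\sum_{\om\in E_j}|\om|
\;\le\;\frac{d_j}{c}\sum_{\om\in E_j}\frac{1}{\|x_j|_\om'\|_\infty}
\;\le\;\frac{d_j}{c}\sum_{\om\in\PP_j}\frac{1}{\|x_j|_\om'\|_\infty}.
$$
Condition~(III), applied with a sub-exponent $\delta_0$ to be chosen, gives
$$
\Big|\bigcup_{\om\in E_j}\om\Big|\;\le\;\frac{C}{c}\,d_j\,e^{j^{\delta_0}}.
$$

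To finish, I would convert this into the desired $d_j^{1/2}$ bound by writing $d_j\,e^{j^{\delta_0}} = d_j^{1/2}\cdot d_j^{1/2} e^{j^{\delta_0}}$ and choosing $\delta_0\in(0,\delta)$. By \eqref{eq.stretched} there is a constant $C'$ with $d_j\le C'e^{-j^\delta}$ for all $j$, so
$$
d_j^{1/2}\,e^{j^{\delta_0}}\;\le\;(C')^{1/2}\,\exp\!\Big(j^{\delta_0}-\tfrac{1}{2}j^\delta\Big),
$$
which is bounded uniformly in $j$ thanks to $\delta_0<\delta$. Absorbing this bounded factor into the constant yields $\big|\bigcup_{\om\in E_j}\om\big|\le C\,d_j^{1/2}$ as claimed.

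The only non-routine step is the distortion bound $|x_j'(a^*)|\ge c\,\|x_j|_\omega'\|_\infty$ on elements of $\PP_j$, which is the real content but is exactly the type of estimate collected in Lemma~\ref{l.distortion}; everything else is bookkeeping. The choice $\delta_0<\delta$ is the mechanism that converts the stretched-exponential decay hypothesis on $d_j$ into the concrete $d_j^{1/2}$ bound, and it is essentially the only place the hypothesis \eqref{eq.stretched} enters.
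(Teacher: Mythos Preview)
Your proof is correct and follows essentially the same approach as the paper: bound $|\omega|$ via the distortion estimate \eqref{eq.distortion1} from Lemma~\ref{l.distortion}, sum and apply condition~(III) with some $\delta_0<\delta$, then absorb the factor $d_j^{1/2}e^{j^{\delta_0}}$ into the constant using the stretched-exponential decay of $d_j$. The paper's write-up is slightly terser but the argument is identical.
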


\begin{proof}
Take $\delta_0$ in condition~(III) strictly less than a $\delta$ satisfying \eqref{eq.stretched}. 
By the distortion estimate \eqref{eq.distortion1} below,  
for $\om\in\PP_j$ such that $|x_j(\om)|\le d_j$,
we have $|\om|\le C\frac{d_j}{\|x_j|_\om'\|_\infty}$.
We conclude that 
$$
\Big|\bigcup_{\om\in E_j}\om\Big|\le Cd_j\sum_{\om\in\PP_j}\frac1{\|x_j|_\om'\|_\infty}
\le C^2d_je^{j^{\delta_0}}\le C^3d_j^{1/2}\,.
$$
\end{proof}

\begin{remark}
\label{r.3p}
Lemma~\ref{l.goodpartition} is the only place where we need condition~(III). As an alternative 
condition to (III) it would be sufficient to require the following:
\begin{itemize}
\item[(III)']
For each $\delta>0$ there are constants $C$ and $\beta>0$ so that 
$$
|\{\om\in\PP_j\mid |x_j(\om)|\le e^{-j^\delta}\}|\le Ce^{-j^\beta}\,.
$$
\end{itemize}
We preferred to put the slightly stronger condition~(III) in Section~\ref{s.uniform} 
since it is the condition which we actually verify in the examples considered in 
Section~\ref{s.examples}.
\end{remark}

Since the sequence of maps $x_j$ is not the iteration of a fixed dynamical system
admitting an invariant measure, in order to gain information about this sequence we 
have to switch locally from $x_j$ to $T_{a_0}^j$ for some fixed parameter value $a_0$. 
After having switched we can profit from the abundant existing results for such a fixed 
mixing piecewise expanding map $T_{a_0}$. Very frequently 
we will use the exponential 
decay of correlations of $T_{a_0}$. Since we can only switch locally, we need that the 
constants in the decay of correlation for different $T_a$ in the family are uniform.

\begin{proposition}[Uniform decay of correlations]
\label{p.udc}
Assume that the family $T_a$ satisfies condition~(II).
Then, the family $T_a$ has uniform exponential decay of correlations for $a$ close to $0$, i.e., 
there exist constants $0<\epsilon'\le\epsilon$, 
$C\ge1$, and $0<\rho<1$ such that for all $a\in[0,\epsilon']$, 
for all functions $\varphi\in V_\alpha$, and all $\psi\in L^1$ we have 
$$
\Big|\int_0^1\varphi\psi\circ T_a^ndm-\int_0^1\varphi dm\int_0^1\psi d\mu_a\Big|
\le C \|\varphi\|_{\alpha}\|\psi\|_{L^1}\rho^n\,,\quad\forall n\ge1\,.
$$
\end{proposition}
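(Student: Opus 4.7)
The plan is to obtain Proposition~\ref{p.udc} from the uniform Lasota--Yorke inequality in condition~(II) via Keller--Liverani perturbation theory, and then translate the resulting uniform spectral gap into a decay-of-correlations statement by duality.

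First I would establish the spectral picture for the unperturbed operator $\LL_0$. The uniform Lasota--Yorke inequality \eqref{eq.uniformLY}, combined with the compactness of the embedding $V_\alpha\hookrightarrow L^1$ and the classical Hennion/Ionescu--Tulcea--Marinescu argument, shows that each $\LL_a$ is quasi-compact on $V_\alpha$ with essential spectral radius at most $\tilde\rho$. Since $T_0$ is mixing, the eigenvalue $1$ of $\LL_0$ is simple and it is the only point of the spectrum of $\LL_0$ on the unit circle; hence one can fix $\tilde\rho<\rho_0<1$ so that $\sigma(\LL_0)\cap\{|z|\ge\rho_0\}=\{1\}$, and one has a spectral decomposition $\LL_0=\Pi_0+Q_0$ with $\Pi_0\varphi=(\int\varphi\,dm)\,h_0$, $\Pi_0Q_0=Q_0\Pi_0=0$, and the spectral radius of $Q_0$ strictly less than $\rho_0$.

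Next I would verify the operator-norm closeness needed to invoke Keller--Liverani. Precisely, one shows that there is an exponent $\beta>0$ and a constant $C$ such that
\begin{equation*}
\|\LL_a\varphi-\LL_0\varphi\|_{L^1}\le C\,a^{\beta}\,\|\varphi\|_{\alpha}, \qquad \forall \varphi\in V_\alpha,\ a\in[0,\epsilon].
\end{equation*}
This is a standard computation using properties~(i)--(iii): the Lipschitz continuity of the breakpoints $b_k(a)$ gives a control of size $O(a)$ on the ``shifted'' pieces of the partition, and the $C^{1+\alpha}$ dependence of $T_a(x)$ in $a$ together with uniform expansion controls the contribution on the common pieces through a change-of-variable and a $|\varphi|_\alpha$ oscillation estimate. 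With this closeness in the operator norm $V_\alpha\to L^1$ and the uniform Lasota--Yorke bound at hand, the Keller--Liverani theorem yields $\epsilon'>0$, $\rho\in(\rho_0,1)$ and $C\ge 1$ such that for every $a\in[0,\epsilon']$ the operator $\LL_a$ has a spectral decomposition $\LL_a=\Pi_a+Q_a$, where $\Pi_a$ is a rank-one projection depending continuously on $a$ (in particular $\Pi_a\to\Pi_0$), $\Pi_aQ_a=Q_a\Pi_a=0$, and
\begin{equation*}
\|Q_a^n\varphi\|_{\alpha}\le C\rho^{n}\|\varphi\|_{\alpha},\qquad \forall n\ge1,\ \varphi\in V_\alpha.
\end{equation*}
Since $\LL_a$ has a non-negative fixed point and $\Pi_a$ stays one-dimensional, this forces $T_a$ to remain mixing on $K(a)$ (so no other peripheral spectrum can appear) and $\Pi_a\varphi=(\int\varphi\,dm)\,h_a$.

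Finally, I would use duality to derive the claimed decay of correlations. For any $\varphi\in V_\alpha$, $\psi\in L^1$, and $n\ge1$,
\begin{equation*}
\int\varphi\,\psi\circ T_a^n\,dm=\int(\LL_a^n\varphi)\,\psi\,dm=\Big(\!\int\!\varphi\,dm\Big)\!\!\int h_a\psi\,dm+\int(Q_a^n\varphi)\,\psi\,dm,
\end{equation*}
so that the correlation equals $(\int\varphi\,dm)(\int\psi\,d\mu_a)+\int(Q_a^n\varphi)\psi\,dm$. Bounding the last term by $\|Q_a^n\varphi\|_\infty\|\psi\|_{L^1}$ and using \eqref{eq.inftyalpha} together with the uniform geometric bound on $Q_a^n$ produces the advertised estimate with constants independent of $a\in[0,\epsilon']$. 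The main obstacle in this argument is the $V_\alpha\to L^1$ continuity $\LL_a\to\LL_0$: moving the partition points prevents a pointwise comparison and requires a careful splitting of the summands in the transfer operator together with the oscillation seminorm built into $V_\alpha$. Once that continuity is in hand, Keller--Liverani does the rest in a black-box manner.
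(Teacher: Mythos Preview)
Your proposal is correct and follows essentially the same route as the paper: verify the $V_\alpha\to L^1$ closeness $\|(\LL_a-\LL_0)\varphi\|_{L^1}\le C a^\beta\|\varphi\|_\alpha$ (the paper obtains $\beta=\alpha$ via the splitting you describe), combine with the uniform Lasota--Yorke inequality and the mixing of $T_0$ to invoke Keller--Liverani, and then read off the decay of correlations by duality using \eqref{eq.inftyalpha}. The paper spells out the closeness estimate in detail, but your outline of that step matches their computation.
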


\begin{proof}
The proof is a direct application of the perturbation results of Keller and Liverani \cite{kellerliverani} using 
the estimates in Keller \cite{keller} and Saussol \cite{saussol}. Observe that $\|\LL_a^n\varphi\|_{L^1}=\|\varphi\|_{L^1}$. 
At the end of this proof we will show that for all $\varphi\in V_{\alpha}$
\begin{equation}
\label{eq.strongtoweak}
\|(\LL_a-\LL_0)\varphi\|_{L^1}/\|\varphi\|_{\alpha}=O(|a|^\alpha).
\end{equation}
Combined with \eqref{eq.uniformLY} in condition~(II) and since $T_0$ is mixing, 
by \cite{kellerliverani}, we find $0<\epsilon'\le\epsilon$ so that, for all $a\in[0,\epsilon']$,
$T_a$ is mixing and $\LL_a$ can be written as $\PPP_a+\QQ_a$ 
where $\PPP_a\varphi=h_a\int\varphi dm$ is a one-dimensional projection and 
where there are constants $C\ge1$ and $0<\rho<1$ (both independent on $a$) so that 
$\|\QQ_a^n\varphi\|_{\alpha}\le C\rho^n\|\varphi\|_{\alpha}$, for all $n\ge1$. 
Furthermore, for the later use we note that by \cite{kellerliverani} we get a constant 
$\kappa>0$ such that 
\begin{equation}
\label{eq.densitydiff}
\|h_0-h_a\|_{L^1}= O(|a|^\kappa)\,,\qquad\forall a\in[0,\epsilon']\,.
\end{equation}
For $\varphi\in V_{\alpha}$ and $\psi\in L^1$, we get
\begin{align*}
\int\varphi\psi\circ T_a^ndm=\int[(\PPP_a+\QQ_a^n)\varphi]\psi dm
=\int\varphi dm\int\psi d\mu_a+\int\psi\QQ_a^n\varphi dm\,.
\end{align*}
Hence, using \eqref{eq.inftyalpha}, we derive 
\begin{align*}
\Big|\int\varphi\psi\circ T_a^ndm-\int\varphi dm\int\psi d\mu_a\Big|
\le C^2\|\varphi\|_{\alpha}\|\psi\|_{L^1}\rho^n\,.
\end{align*}

It remains to show \eqref{eq.strongtoweak}. Recall the notation $b_0,...,b_{p_0}$ for the 
partition points (the $b_i$'s depend on $a$ and are Lipschitz in $a$, say with Lipschitz 
constant $L$). 
Observe that $\|(\LL_a-\LL_0)\varphi\|_{L^1}$ is bounded above by
$$
\sum_{i=0}^{p_0-1}\int\left|\frac{\varphi(T_a|_{[b_i,b_{i+1}]}^{-1}(x))}
{|T_a'(T_a|_{[b_i,b_{i+1}]}^{-1}(x))|}\chi_{T_a([b_i,b_{i+1}])}(x)-\frac{\varphi(T_0|_{[b_i,b_{i+1}]}^{-1}(x))}
{|T_0'(T_0|_{[b_i,b_{i+1}]}^{-1}(x))|}\chi_{T_0([b_i,b_{i+1}])}(x)\right|\,.
$$
Let $J_i$ be the interval $T_0([b_i,b_{i+1}])$ from which we 
subtract at each boundary point an interval of length $\Lambda L|a|$. 
Since the partition points $b_i$ are Lipschitz in $a$, it follows that if 
$y\in (T_0|_{[b_i,b_{i+1}]})^{-1}(J_i)$ then $y\in(b_i(a'),b_{i+1}(a'))$, for all 
$a'\in[0,a]$. Furthermore, we have that $(T_a|_{[b_i,b_{i+1}]})^{-1}(J_i)\subset(b_i(0),b_{i+1}(0))$. 
Recall property~(ii) in the beginning of Section~\ref{s.uniform}, in particular, recall that 
$a'\mapsto T_{a'}'(y)$ is $\alpha$-H\"older. 
Restricting the integral above to the interval $J_i$, we apply the triangle inequality and 
we split the integral into two 
integrals where the first one is (recall \eqref{eq.inftyalpha})
\begin{multline*}
\int_{J_i}\left|\varphi(T_a|_{[b_i,b_{i+1}]}^{-1}(x))
\left(\frac1{|T_a'(T_a|_{[b_i,b_{i+1}]}^{-1}(x))|}-\frac1{|T_0'(T_0|_{[b_i,b_{i+1}]}^{-1}(x))|}\right)\right|dx\\
\le C\|\varphi\|_{L^\infty}|a|^\alpha\le C^2\|\varphi\|_{\alpha}|a|^\alpha\,,
\end{multline*}
and the second one is
\begin{multline*}
\int_{J_i}\frac1{|T_0'(T_0|_{[b_i,b_{i+1}]}^{-1}(x))|}
\left|\varphi(T_a|_{[b_i,b_{i+1}]}^{-1}(x))-\varphi(T_0|_{[b_i,b_{i+1}]}^{-1}(x))\right|dx\\
\le C\int_{J_i}\osc(\varphi,C|a|,y)dy\le C^2|a|^{\alpha}|\varphi|_\alpha
\le C^2|a|^\alpha\|\varphi\|_\alpha\,,
\end{multline*}
where we used the first inequality in \eqref{eq.forudc} below (therein set $x_{i+1}^1=x_{i+1}^2=x$).
In order to derive \eqref{eq.strongtoweak}, it remains only to consider the integrals over 
$T_a([b_i,b_{i+1}])\setminus J_i$ and $T_0([b_i,b_{i+1}])\setminus J_i$, respectively. 
However, one easily sees that the 
measures of these sets are bounded by a constant times $|a|$. Using once more 
\eqref{eq.inftyalpha}, this concludes the proof.
\end{proof}

The next lemma is a collection of various distortion estimates. Recall the notations of the 
partitions in Section~\ref{ss.partitions}. In particular, recall that $\PP_j(a)$ is 
the partition in the phase space, while $\PP_j(=\PP_j|[0,\epsilon])$ denotes the partition 
in the parameter space.

\begin{lemma}[Distortion]
\label{l.distortion}
There exists a constant $C$ such that the following holds.

For $a_1,a_2\in[0,\epsilon]$ and $k\ge1$, if $x\in[0,1]$, has the same combinatorics under $T_{a_1}$
and $T_{a_2}$ up to the $(k-1)$-th iteration, then 
\begin{equation}
\label{eq.xaa}
|T_{a_1}^k(x)-T_{a_2}^k(x)|\le C\Lambda^k|a_1-a_2|\,.
\end{equation}

Let $\ome\in\PP_k$. If $\om\subset\ome$ is an interval, then 
\begin{equation}
\label{eq.distortion1}
\left|\frac{x_k'(a_1)}{x_k'(a_2)}\right|\le\left(1+C|x_k(\om)|^\alpha\right)\,,\qquad\forall a_1,a_2\in\om\,.
\end{equation}

Let $k\ge1$ and $a_1,a_2\in[0,\epsilon]$ so that $|a_1-a_2|\le1/k^{1/\alpha}$. 
If $\om_1\in\PP_k(a_1)$ and $\om_2\in\PP_k(a_2)$ have the same 
combinatorics up to the $(k-1)$-th iteration then 
\begin{equation}
\label{eq.distortion2}
\left|\frac{(T_{a_1}^k)'(x_1)}{(T_{a_2}^k)'(x_2)}\right|\le C\,,\qquad\forall x_1\in\om_1\ \text{and }
x_2\in\om_2\,.
\end{equation}

Let $1\le k\le \ell$. 
For $\om\in\PP_\ell$ and $a\in\om$, we have
\begin{equation}
\label{eq.distortion3}
C^{-1}\le\left|\frac{x_\ell'(a)/x_k'(a)}{(T_a^{\ell-k})'(x_k(a))}\right|\le C\,.
\end{equation}
\end{lemma}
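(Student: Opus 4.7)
My plan is to handle the four inequalities in order of increasing difficulty. The last one, \eqref{eq.distortion3}, is essentially immediate from Lemma~\ref{l.transversality}: since $\PP_\ell$ refines $\PP_k$, the element $\om\in\PP_\ell$ is contained in some $\ome\in\PP_k$, and applying Lemma~\ref{l.transversality} at both indices gives that $x_\ell'(a)/(T_a^\ell)'(x_0(a))$ and $x_k'(a)/(T_a^k)'(x_0(a))$ are each pinched between two positive constants on $\om$. Dividing and using the chain rule identity $(T_a^\ell)'(x_0(a))=(T_a^{\ell-k})'(x_k(a))(T_a^k)'(x_0(a))$ yields~\eqref{eq.distortion3} with a constant depending only on the one supplied by Lemma~\ref{l.transversality}.

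For \eqref{eq.xaa} I would telescope
$$T_{a_1}^k(x)-T_{a_2}^k(x)=\sum_{i=1}^k\bigl(T_{a_1}^{k-i}(T_{a_1}(T_{a_2}^{i-1}(x)))-T_{a_1}^{k-i}(T_{a_2}(T_{a_2}^{i-1}(x)))\bigr),$$
bound each term by $\La^{k-i}\cdot C|a_1-a_2|$ using $\|T_a'\|_\infty\le\La$ from \eqref{eq.la} and the uniform pointwise bound on $\partial_aT_a$ supplied by property~(ii) of Section~\ref{s.uniform}, and then sum the geometric series. The matching-combinatorics hypothesis is used to ensure that none of the intermediate orbits crosses a partition point, so that the mean value theorem applies piece by piece on each $T_{a_1}$-monotonicity interval visited.

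For the two genuine distortion estimates \eqref{eq.distortion1} and \eqref{eq.distortion2}, my general strategy is to pass to logarithms, exploit the factorisation $\log|(T_a^k)'(y)|=\sum_{i=0}^{k-1}\log|T_a'(T_a^i(y))|$, and bound each term by $C(|T_{a_1}^i(\cdot)-T_{a_2}^i(\cdot)|^\alpha+|a_1-a_2|^\alpha)$ using that $T_a'$ is uniformly $\alpha$-H\"older in both variables (from \eqref{eq.holder} and property~(ii)). For \eqref{eq.distortion1} I would first invoke Lemma~\ref{l.transversality} to replace $x_k'(a_j)$ by $(T_{a_j}^k)'(x_0(a_j))$ up to a bounded factor; then, inside the element $\ome\in\PP_k$ containing $\om$, the orbits $a\mapsto x_i(a)$ are smooth and satisfy the backward-contraction estimate $|x_i(a_1)-x_i(a_2)|\le C\la^{-(k-i)}|x_k(\om)|$ (obtained by integrating $x_i'$, applying Lemma~\ref{l.transversality} once more, and invoking the chain rule). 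The geometric sum of the resulting $\alpha$-powers produces the desired factor $|x_k(\om)|^\alpha$, and the parameter-drift contribution $k|a_1-a_2|^\alpha$ is even smaller, since $|\om|\le C\la^{-k}|x_k(\om)|$.

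The main obstacle is \eqref{eq.distortion2}, where $x_1\in\om_1\in\PP_k(a_1)$ and $x_2\in\om_2\in\PP_k(a_2)$ belong to partitions associated to \emph{different} maps, so backward contraction alone cannot directly compare the two orbits. I would first show, using the Lipschitz dependence of the partition points combined with pulling \eqref{eq.xaa} back through matching inverse branches, that the symmetric difference $|\om_1\triangle\om_2|$ is at most $C|a_1-a_2|$, and deduce
$$|T_{a_1}^i(x_1)-T_{a_2}^i(x_2)|\le C\bigl(\la^{-(k-i)}+\La^i|a_1-a_2|\bigr)$$
by combining backward contraction inside each $\om_j$ with forward propagation of the initial spread via \eqref{eq.xaa}. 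The $\alpha$-H\"older sum then splits at the crossover iterate where the two regimes balance; in both regimes the partial sums are geometric, and the remaining parameter-drift contribution $k|a_1-a_2|^\alpha\le 1$ is precisely what the hypothesis $|a_1-a_2|\le k^{-1/\alpha}$ delivers. Verifying this splicing carefully---and in particular making the symbolic coding of both orbits through the drifting partition points compatible with the above two-sided bound---is the step I expect to require the most care.
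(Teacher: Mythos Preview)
Your treatments of \eqref{eq.distortion3} and \eqref{eq.xaa} are fine and essentially match the paper's.

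There is a genuine gap in your approach to \eqref{eq.distortion1}. Invoking Lemma~\ref{l.transversality} ``to replace $x_k'(a_j)$ by $(T_{a_j}^k)'(x_0(a_j))$ up to a bounded factor'' only tells you that the ratio $x_k'(a)/(T_a^k)'(x_0(a))$ lies between $C^{-1}$ and $C$; it says nothing about how this ratio \emph{varies} over $\om$. After your replacement you therefore obtain at best
\[
\left|\frac{x_k'(a_1)}{x_k'(a_2)}\right|\le C^2\left|\frac{(T_{a_1}^k)'(x_0(a_1))}{(T_{a_2}^k)'(x_0(a_2))}\right|\le C^2\bigl(1+C|x_k(\om)|^\alpha\bigr),
\]
a uniform bound rather than the sharp $1+C|x_k(\om)|^\alpha$ that the lemma asserts and that is genuinely needed later (e.g.\ in the change-of-variables step \eqref{eq.switchdist}). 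The paper closes this by expressing the transversality ratio explicitly via \eqref{eq.startcalc} (take $k=0$ there),
\[
\frac{x_k'(a)}{(T_a^k)'(x_0(a))}=x_0'(a)+\sum_{j=1}^k\frac{(\partial_aT_a)(x_{j-1}(a))}{(T_a^j)'(x_0(a))},
\]
and comparing term by term for $a_1,a_2\in\om$: each summand differs by $O(\la^{-j}|x_k(\om)|^\alpha)$, which sums geometrically, so the ratio of transversality ratios is itself $1+O(|x_k(\om)|^\alpha)$.

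Your plan for \eqref{eq.distortion2} also fails as written. In your orbit bound $|T_{a_1}^i(x_1)-T_{a_2}^i(x_2)|\le C(\la^{-(k-i)}+\La^i|a_1-a_2|)$ both summands are \emph{increasing} in $i$; there is no crossover, and the $\alpha$-H\"older sum of the second summand alone is of order $|a_1-a_2|^\alpha\La^{\alpha k}$, which the hypothesis $|a_1-a_2|\le k^{-1/\alpha}$ does not control (take $|a_1-a_2|=k^{-1/\alpha}$). What is needed is a \emph{backward} recursion: the paper shows $|x_i^1-x_i^2|\le C|a_1-a_2|+\la^{-1}|x_{i+1}^1-x_{i+1}^2|$ by pulling both points back through the common inverse branch, which iterates from $i=k$ down to give $|x_i^1-x_i^2|\le C|a_1-a_2|+\la^{-(k-i)}$ with a parameter contribution \emph{constant} in $i$; then $\sum_i|x_i^1-x_i^2|^\alpha\le k\,C^\alpha|a_1-a_2|^\alpha+O(1)=O(1)$.
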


\begin{proof}
Property~\eqref{eq.distortion3} follows immediately from Lemma~\ref{l.transversality}.

We next show property~\eqref{eq.xaa}.
Set $x^1_i=T_{a_1}^i(x)$ and $x^2_i=T_{a_2}^i(x)$, $0\le i\le k-1$. We assume that the constant 
$C$ in the assertion of Lemma~\ref{l.distortion} satisfies $C\gg\delta_0^{-1}$ where 
$\delta_0$ is the constant in property~(i) in Section~\ref{s.uniform}. By this choice, 
regarding the proof of \eqref{eq.xaa} the only non-trivial situation is when $|a_1-a_2|\ll\delta_0$. 
Recall that the partition points $b_0(a)<...<b_{p_0}(a)$ are 
Lipschitz, say with constant $L$. 
Let $0\le i\le k-1$ and take $\ell=\ell(i)$ so that $x_i^1\in(b_{\ell-1}(a_1),b_\ell(a_1))$. 
Since $|a_1-a_2|\ll\delta_0$, we find $y\in(0,1)$ so that  
$|x_i^1-y|<L|a_1-a_2|/2$ and $y\in(b_{\ell-1}(a),b_\ell(a))$, for all $a\in[a_1,a_2]$. 
By property~(ii) in Section~\ref{s.uniform}, it follows then that 
$|T_{a_1}(y)-T_{a_2}(y)|\le C|a_1-a_2|$. Hence, we derive
\begin{align}
\label{eq.aaxx}
\nonumber
|T_{a_1}(x_i^1)-T_{a_2}(x_i^2)|
&\le|T_{a_1}(x_i^1)-T_{a_1}(y)|+|T_{a_1}(y)-T_{a_2}(y)|+|T_{a_2}(y)-T_{a_2}(x_i^2)|\\
\nonumber
&\le\Lambda|x_i^1-y|+C|a_1-a_2|+\Lambda|y-x_i^2|\\
&\le\Lambda(L+C)|a_1-a_2|+\Lambda|x_i^1-x_i^2|\,.
\end{align}
This estimate immediately implies \eqref{eq.xaa}.

Regarding property~\eqref{eq.distortion1} observe first that by \eqref{eq.startcalc} 
(when $k=0$ therein) we get
\begin{equation}
\label{eq.xkTk}
\frac{x_k'(a)}{(T_a^k)'(x_0(a))}
=x_0'(a)+\sum_{j=1}^k\frac{(\partial_aT_a)(x_{j-1}(a))}{(T_a^j)'(x_0(a))}\,.
\end{equation}

As in proving \eqref{eq.xaa}, we can assume that $|\om|\ll\delta_0$ (otherwise we can 
compensate by possibly increasing the constant $C$). 
We proceed similarly as in deriving \eqref{eq.aaxx}. 
Let $0\le i\le k-1$ and take $\ell=\ell(i)$ so that $x_i(a_1)\in(b_{\ell-1}(a_1),b_\ell(a_1))$. 
Since $|a_1-a_2|\ll\delta_0$, we find $y\in(0,1)$ so that  
$|x_i(a_1)-y|<L|a_1-a_2|/2$ and $y\in(b_{\ell-1}(a),b_\ell(a))$, for all $a\in[a_1,a_2]$. 
By property~(ii) in Section~\ref{s.uniform}, it follows that 
$|T_{a_1}'(y)-T_{a_2}'(y)|\le C|a_1-a_2|^\alpha$. 
Hence, by a similar calculation as in \eqref{eq.aaxx}, we get 
\begin{equation}
\label{eq.30}
|T_{a_1}'(x_i(a_1))-T_{a_2}'(x_i(a_2))|\le C|a_1-a_2|^\alpha + C|x_i(\om)|^\alpha
\le C^2|x_i(\om)|^\alpha\,.
\end{equation}
Thus, 
\begin{equation}
\label{eq.standarddist}
\left|\frac{(T_{a_1}^j)'(x_0(a_1))}{(T_{a_2}^j)'(x_0(a_2))}\right|
=\prod_{i=0}^{j-1}\left|\frac{T_{a_1}'(x_i(a_1))}{T_{a_2}'(x_i(a_2))}\right|
\le1+C\sum_{i=0}^{j-1}|x_i(\om)|^\alpha\,,
\end{equation}
from which follows that 
$$
\left|\frac1{(T_{a_1}^j)'(x_0(a_1))}-\frac1{(T_{a_2}^j)'(x_0(a_2))}\right|
\le C\frac{|x_{j-1}(\om)|^\alpha}{|(T_{a_1}^j)'(x_0(a_1))|}\,.
$$
Recall that, by property~(ii) in Section~\ref{s.uniform}, $a\mapsto\partial_aT_a(x)$ 
and $x\mapsto\partial_aT_a(x)$ are $\alpha$-H\"older continuous.
Hence, using a ``help'' point $y$ as above, we get 
$$
\left|(\partial_aT_a)|_{a=a_1}(x_{j-1}(a_1))-(\partial_aT_a)|_{a=a_2}(x_{j-1}(a_2))\right|
\le C|x_{j-1}(\om)|^\alpha\,.
$$
Combined with the $\alpha$-H\"older continuity of $x_0'$, by comparing each term 
on the right hand side of \eqref{eq.xkTk} for $a=a_1$ and $a=a_2$, it follows
\begin{align*}
\left|\frac{x_k'(a_1)}{(T_{a_1}^k)'(x_0(a_1))}\right|
&\le\left|\frac{x_k'(a_2)}{(T_{a_2}^k)'(x_0(a_2))}\right|
+C|a_1-a_2|^\alpha+C\sum_{j=1}^k\lambda^{-j}|x_{j-1}(\om)|^\alpha\\
&\le\left|\frac{x_k'(a_2)}{(T_{a_2}^k)'(x_0(a_2))}\right|+C^2|x_k(\om)|^\alpha\,.
\end{align*}
Altogether, we have
\begin{align*}
\left|\frac{x_k'(a_1)}{x_k'(a_2)}\right|\le\left(1+C|x_k(\om)|^\alpha\right)
\left|\frac{x_k'(a_1)/(T_{a_1}^k)'(x_0(a_1))}{x_k'(a_2)/(T_{a_2}^k)'(x_0(a_2))}\right|
\le1+C^4|x_k(\om)|^\alpha\,,
\end{align*}
where in the last inequality we use the fact that 
$|x_k'(a_2)/(T_{a_2}^k)'(x_0(a_2))|\ge C^{-1}$, by Lemma~\ref{l.transversality}. 

It is left to prove the distortion estimate~\eqref{eq.distortion2}.
Choose two points $x_0^1\in \om_1$ 
and $x_0^2\in\om_2$ and, for $1\le i\le k$, let 
$x_i^1=T_{a_1}^i(x_0^1)$ and $x_i^2=T_{a_2}^i(x_0^2)$.
We claim that there is a constant $C$ so that
\begin{equation}
\label{eq.distance}
|x_i^1-x_i^2|\le C\frac1{k^{1/\alpha}}+\frac1{\lambda^{k-i}}\,,\qquad\forall 0\le i\le k\,.
\end{equation}
In order to show ~\eqref{eq.distance}, we proceed similarly as in showing \eqref{eq.aaxx}. 
Let $0\le i\le k-1$ and take $\ell=\ell(i)$ so that $x_i^1\in(b_{\ell-1}(a_1),b_\ell(a_1))$. 
By possible increasing the constant $C$ in the assertion of Lemma~\ref{l.distortion} 
we can assume that $|a_1-a_2|\ll\delta_0$ and we find $y\in(0,1)$ so that  
$|x_i^1-y|<L|a_1-a_2|/2$ and $y\in(b_{\ell-1}(a),b_\ell(a))$, for all $a\in[a_1,a_2]$. 
Since $|y-x_i^2|\le\la^{-1}|T_{a_2}(y)-T_{a_2}(x_i^2)|$, 
we obtain
$$
|x_i^1-x_i^2|\le L|a_1-a_2|/2
+\frac1\la|T_{a_2}(y)-T_{a_1}(y)|+\frac1\la|T_{a_1}(y)-x_{i+1}^2|\,.
$$
As in \eqref{eq.aaxx}, we have
$|T_{a_2}(y)-T_{a_1}(y)|\le C|a_2-a_1|$, and note that
$$
|T_{a_1}(y)-x_{i+1}^2|\le|T_{a_1}(y)-x_{i+1}^1|+|x_{i+1}^1-x_{i+1}^2|
\le \Lambda L|a_1-a_2|/2+|x_{i+1}^1-x_{i+1}^2|\,.
$$
Altogether, recalling that $|a_1-a_2|\le1/k^{1/\alpha}$, we find a constant $C$ so that
\begin{equation}
\label{eq.forudc}
|x_i^1-x_i^2|\le C|a_1-a_2|+|x_{i+1}^1-x_{i+1}^2|/\lambda
\le C/k^{1/\alpha}+|x_{i+1}^1-x_{i+1}^2|/\lambda\,.
\end{equation}
From this estimate we easily deduce \eqref{eq.distance}.

By \eqref{eq.30} and \eqref{eq.distance}, for all 
$0\le i\le k-1$,
we obtain
$$
|T_{a_1}'(x_i^1)-T_{a_2}'(x_i^2)|\le C|a_1-a_2|^\alpha
+C|x_i^1-x_i^2|^\alpha\le C^2/k+C^2/\lambda^{\alpha(k-i)}\,,
$$
which implies
\begin{equation*}
\left|\frac{(T_{a_1}^k)(x_0^1)}{(T_{a_2}^k)'(x_0^2)}\right|\le
\prod_{i=0}^{k-1}\frac{|T_{a_1}'(x_i^1)|}{|T_{a_2}'(x_i^2)|}
\le\prod_{i=0}^{k-1}\frac{|T_{a_2}'(x_i^2)|+C^2/k+C^2/\lambda^{\alpha(k-i)}}{|T_{a_2}'(x_i^2)|}\,.
\end{equation*}
Since the right hand side is bounded by a constant independent on $k$, $a_1$ and $a_2$, 
this concludes the proof of \eqref{eq.distortion2}.
\end{proof}

Recall the definition of $\sigma_a(\varphi)$ in \eqref{eq.sigma} (where $\varphi$ here is 
in the space $V_\alpha$).
In order to ensure that the functions $\xi_j(a)$, $j\ge1$, defined in \eqref{eq.xigen} 
depend nicely on $a$, we have to 
investigate the $a$-dependence of $\sigma_a$.

\begin{lemma}[Regularity of $a\mapsto\sigma_a$]
\label{l.varregularity}
Assume that the family $T_a$ satisfies condition~(II).
Let $\epsilon'>0$ be the constant in Proposition~\ref{p.udc}.
For each $\varphi\in V_\alpha$ there exist constants $C$ and $\kappa>0$ such that 
\begin{equation}
\label{eq.varregularity}
|\sigma_a(\varphi)-\sigma_{a'}(\varphi)|\le C|a-a'|^\kappa\,,\quad\forall\ a,a'\in[0,\epsilon']\,.
\end{equation}
\end{lemma}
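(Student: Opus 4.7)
The plan is to establish H\"older continuity of the more tractable quantity $a\mapsto\sigma_a(\varphi)^2$; the assertion then follows from the elementary inequality $|\sigma_a-\sigma_{a'}|\le|\sigma_a^2-\sigma_{a'}^2|^{1/2}$. Writing $h_a=d\mu_a/dm$, $\bar\varphi_a=\int\varphi h_a\,dm$, and using duality $\int g\cdot f\circ T_a\,dm=\int \LL_a(g)\cdot f\,dm$, I recast the infinite sum in \eqref{eq.sigma} as
$$
\sigma_a(\varphi)^2=C_a(0)+2\sum_{n\ge1}C_a(n),\qquad C_a(n):=\int\varphi\,\LL_a^n(\varphi h_a)\,dm-\bar\varphi_a^2\,.
$$
As a preliminary I note that $\|h_a\|_\alpha$ is uniformly bounded on $[0,\epsilon']$: iterating $h_a=\LL_a^n h_a$ in the Lasota--Yorke inequality of condition~(II) and letting $n\to\infty$ gives $\|h_a\|_\alpha\le C/(1-\tilde\rho)$. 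Applying Proposition~\ref{p.udc} with first argument $(\varphi-\bar\varphi_a)h_a$ and second argument $\varphi-\bar\varphi_a$ (both of whose mean terms vanish by definition of $\bar\varphi_a$ and by $T_a$-invariance of $\mu_a$), together with \eqref{eq.algebra}, yields a bound $|C_a(n)|\le K\rho^n$ with $K,\rho$ independent of $a$.

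The next step is to compare $C_a(n)$ and $C_{a'}(n)$ for fixed $n$. I decompose
$$
\LL_a^n(\varphi h_a)-\LL_{a'}^n(\varphi h_{a'})=\LL_a^n\bigl(\varphi(h_a-h_{a'})\bigr)+(\LL_a^n-\LL_{a'}^n)(\varphi h_{a'}).
$$
The first summand is bounded in $L^1$ by $\|\varphi\|_\infty\|h_a-h_{a'}\|_{L^1}\le C|a-a'|^\kappa$ thanks to the $L^1$-contraction of $\LL_a$ and the density comparison \eqref{eq.densitydiff}. For the second, I telescope
$$
\LL_a^n-\LL_{a'}^n=\sum_{k=0}^{n-1}\LL_a^k(\LL_a-\LL_{a'})\LL_{a'}^{n-1-k}
$$
and estimate each summand in $L^1$ by combining the strong-to-weak perturbation estimate~\eqref{eq.strongtoweak} with the uniform Lasota--Yorke bound from~(II) applied to $\LL_{a'}^{n-1-k}(\varphi h_{a'})$; this produces an $O(|a-a'|^\alpha)$ contribution per summand and thus $O(n|a-a'|^\alpha)$ in total. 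Continuity of $\bar\varphi_a$ is then controlled by \eqref{eq.densitydiff} and \eqref{eq.inftyalpha}, so altogether
$$
|C_a(n)-C_{a'}(n)|\le Cn|a-a'|^{\min(\alpha,\kappa)}.
$$

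Finally, I truncate the series at a level $N\ge 1$ and balance the two regimes:
$$
|\sigma_a(\varphi)^2-\sigma_{a'}(\varphi)^2|\le 2\sum_{n=0}^{N}|C_a(n)-C_{a'}(n)|+4K\sum_{n>N}\rho^n\le C N^2|a-a'|^{\min(\alpha,\kappa)}+C'\rho^N.
$$
Choosing $N=\lfloor c\log(1/|a-a'|)\rfloor$ with $c$ small enough that $c\log(1/\rho)$ lies strictly below $\min(\alpha,\kappa)$ gives H\"older continuity of $\sigma_a^2$ with some exponent $\kappa'\in(0,\min(\alpha,\kappa))$, and taking square roots delivers \eqref{eq.varregularity} with exponent $\kappa'/2$. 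The main obstacle, and the reason the logarithmic truncation is needed, is the tension between the pointwise-in-$n$ continuity estimate, which degrades linearly in $n$ because each factor of $\LL_a-\LL_{a'}$ in the telescoping is controlled only in the weak $L^1$ norm, and the necessity of summing infinitely many correlations whose individual decay rate is merely geometric and uniform in $a$.
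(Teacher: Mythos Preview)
Your proof is correct and shares the paper's overall architecture---truncate the correlation series at a level $N\sim c\log(1/|a-a'|)$, compare the finitely many remaining correlations, and balance---but differs genuinely in the comparison step. The paper works on the phase space: it builds (Sublemma~\ref{sl.J}) a common refinement of the dynamical partitions $\PP_k(a)$ and $\PP_k(a')$ on which orbits under $T_a$ and $T_{a'}$ stay $O(\Lambda^k|a-a'|)$-close by~\eqref{eq.xaa}, and then controls $\int|\varphi\circ T_a^k-\varphi\circ T_{a'}^k|$ via the oscillation seminorm of $\varphi$. You instead work on the operator side, telescoping $\LL_a^n-\LL_{a'}^n$ and feeding in the strong-to-weak bound~\eqref{eq.strongtoweak} together with the uniform Lasota--Yorke inequality; this is the standard Keller--Liverani perturbation mechanism and is arguably cleaner, since it sidesteps the combinatorial Sublemma entirely. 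Two small remarks: first, \eqref{eq.strongtoweak} and~\eqref{eq.densitydiff} are literally stated only for the pair $(a,0)$, but their proofs carry over verbatim to arbitrary $a,a'\in[0,\epsilon']$ (the paper itself relies on this in the last paragraph of its proof), so your use of them is legitimate; second, the elementary inequality $|\sigma_a-\sigma_{a'}|\le|\sigma_a^2-\sigma_{a'}^2|^{1/2}$ you invoke at the end is indeed valid for nonnegative reals since $\sigma_a+\sigma_{a'}\ge|\sigma_a-\sigma_{a'}|$.
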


\begin{proof}
For simplicity we assume that $a'=0$ and $\int\varphi d\mu_0=0$.
The general case is proven similarly (cf. the last paragraph 
in this proof). 
For a constant $\kappa'>0$ to be determined later in the proof, 
let $k_0=k_0(a,0)$ be minimal such that, 
for $\bar a=0$ and $\bar a=a$, we have
$$
\Big|2\sum_{k> k_0}\int\Big(\varphi-\int\varphi d\mu_{\bar a}\Big)
\Big(\varphi-\int\varphi d\mu_{\bar a}\Big)\circ T_{\bar a}^kd\mu_{\bar a}\Big|\le a^{\kappa'}\,.
$$
By Proposition~\ref{p.udc}, the absolute value of the integral in the sum is bounded by 
a constant (independent on $a$) times $\rho^k$ which implies that 
\begin{equation}
\label{eq.k0}
k_0\le \kappa''|\log a|+C\,,\qquad\text{where $\kappa''\to0$ as $\kappa'\to0$}\,.
\end{equation}
Observe that, for all $k\ge0$, 
$$
\int\Big(\varphi-\int\varphi d\mu_a\Big)
\Big(\varphi-\int\varphi d\mu_a\Big)\circ T_a^kd\mu_a
=\int\varphi\varphi\circ T_a^kd\mu_a-\Big(\int\varphi d\mu_a\Big)^2\,.
$$
We get
\begin{multline*}
\sigma_a(\varphi)^2-\sigma_0(\varphi)^2
= \int\varphi^2(h_a-h_0)dm-\Big(\int\varphi d\mu_a\Big)^2\\
+2\sum_{k=1}^{k_0}
\Big(\int\varphi\varphi\circ T_a^kd\mu_a-\int\varphi\varphi\circ T_0^kd\mu_0
-\Big(\int\varphi d\mu_a\Big)^2\Big)+O(a^{\kappa'})\,.
\end{multline*}
By \eqref{eq.densitydiff}, we immediately get that the absolute value of the 
first two terms on the right hand side and of the last term in the sum
are bounded above by a constant (depending only on $\|\varphi\|_\infty$) times $a^\kappa$.
Regarding the remaining two integrals, again by \eqref{eq.densitydiff}, we have
$$
\int\varphi\varphi\circ T_a^kd\mu_a-\int\varphi\varphi\circ T_0^kd\mu_0
=\int\varphi\left(\varphi\circ T_a^k-\varphi\circ T_0^k\right)d\mu_0
+O\left(\|\varphi\|_\infty^2a^\kappa\right)\,.
$$
In order to bound the integral on the right hand side, we need the following sublemma.

\begin{sublemma}
\label{sl.J}
For all $a,a'\in[0,\epsilon']$ and $k\ge1$, there exists 
a set of intervals $\PP_k(a,a')$ such that for each $J\in\PP_k(a,a')$ there exist 
$\om\in\PP_k(a)$ and $\om'\in\PP_k(a')$ such that $J=\om\cap\om'$ and $\om$ and 
$\om'$ have the same combinatorics (up to iteration $k-1$). Furthermore, 
\begin{equation}
\label{eq.slJ}
\Big|\bigcup_{J\in\PP_k(a,a')}J\Big|
\ge|\supp\mu_a|- C\Big(\frac{p_1\Lambda}\lambda\Big)^k|a-a'|^\alpha\,.
\end{equation}
(Recall that $p_1$ is the number of elements in $\PP_1(a)$.)
\end{sublemma}

\begin{proof}
We show inductively in $k$ that 
\begin{equation}
\label{eq.slJi}
\Big|\bigcup_{J\in\PP_k(a,a')}J\Big|\ge|\supp\mu_a|
-C\sum_{j=1}^kp_1^j\lambda^{-(j-1)}\Lambda^{j-1}|a-a'|^\alpha\,.
\end{equation}
This immediately implies \eqref{eq.slJ}.

Recall that the properties~(i) and (iii) in the beginning of Section~\ref{s.uniform} asserts 
that the boundary points of the elements in $\PP_1(a)$ are
$\alpha$-H\"older continuous and the partition points $b_j(a)$, $0\le j\le p_0$, are 
Lipschitz continuous in $a$. This 
immediately shows \eqref{eq.slJi} for $k=1$. 
Let $k>1$ and assume the assertion holds for $k-1$. 
For $J_0\in\P_{k-1}(a,a')$, let $\om\in\PP_k(a)|J_0$ and $j=j(\om)$ such that 
$T_a^{k-1}(\om)\subset (b_{j-1}(a),b_j(a))$. By \eqref{eq.xaa} in
Lemma~\ref{l.distortion} and 
by the Lipschitz continuity of $b_{j-1}(a)$ and $b_j(a)$,
we derive
\begin{align*}
|T_{a'}^{k-1}(\om)\cap (b_{j-1}(a'),b_j(a'))|&\ge|T_{a'}^{k-1}(\om)|
-2L|a-a'|-2C\Lambda^{k-1}|a-a'|\\
&\ge |T_{a'}^{k-1}(\om)|-3C\Lambda^{k-1}|a-a'|\,.
\end{align*}
If the right hand side is positive, then 
we find $\om'\in\PP_k(a')|J_0$ with the same 
combinatorics as $\om$. Furthermore, 
by the distortion 
estimate \eqref{eq.distortion2} in Lemma~\ref{l.distortion} (where we set $a_1=a_2=a'$), 
we find a constant $C$ (independent on $\om$) such that 
$$
|\om\cap\om'|\ge|\om|-C\lambda^{-(k-1)}\Lambda^{k-1}|a-a'|\,.
$$
Since there are maximal $p_1^{k-1}$ elements in $\PP_{k-1}(a,a')$ and 
maximal $p_1$ elements in $\PP_k(a)|J_0$, we derive that 
$$
\Big|\bigcup_{J\in\PP_k(a,a')}J\Big|
\ge\Big|\bigcup_{J_0\in\PP_{k-1}(a,a')}J_0\Big|
-Cp_1^k\lambda^{-(k-1)}\Lambda^{k-1}|a-a'|\,.
$$
By the induction assumption, this concludes the proof of \eqref{eq.slJi}.
\end{proof}

Observe that \eqref{eq.k0} implies that $a$ is bounded above by a constant times 
$e^{-k_0/\kappa''}$. Combined with \eqref{eq.xaa} in 
Lemma~\ref{l.distortion}, if $\kappa''\le1/2\log\Lambda$, 
we derive that for all $J\in\PP_k(0,a)$, $k\le k_0$,
$$
|T_a^k(x)-T_0^k(x)|\le C\Lambda^ka\le C^2a^{1/2}\,,
\qquad\forall x\in J\,.
$$
If in addition $\kappa''\le\min(\alpha/4\log(p_1),\alpha/2\log(p_1\Lambda))$, it follows, for $k\le k_0$,
\begin{align*}
\int_{\supp(\mu_0)}\left|\varphi\circ T_a^k-\varphi\circ T_0^k\right|dy
&\le \sum_{J\in\PP_k(0,a)}\int_J\left|\varphi\circ T_a^k-\varphi\circ T_0^k\right|dy
+Cp_1^{k_0}\Lambda^{k_0}a^\alpha\\
&\le\#\{J\in\PP_k(0,a)\}\int\osc(\varphi,C^2a^{1/2},y)dy+C^2a^{\alpha/2}\\
&\le p_1^{k_0}C^{2\alpha}a^{\alpha/2}|\varphi|_\alpha+C^2a^{\alpha/2}
\le\tilde Ca^{\min(\alpha/4,\alpha/2)}\,.
\end{align*}
Altogether, for $\kappa'$ sufficiently small, there exists a constant $C$ (depending on 
$\|\varphi\|_\infty$ and $\|\varphi\|_\alpha$) such that
$$
\left|\sigma_a(\varphi)^2-\sigma_0(\varphi)^2\right|
\le Ca^\kappa+Ck_0\left(\|h_0\|_\infty a^{\min(\alpha/4,\alpha/2)}
+a^{2\kappa}\right)+Ca^{\kappa'}\le C^2a^\kappa\,,
$$
where in the last inequality we possibly have to decrease $\kappa>0$. 

If $a'\neq0$ observe that by \eqref{eq.uniformLY} it follows 
$\|h_a\|_{\alpha}=\lim_{n\to\infty}\|\LL_a^nh_a\|_{\alpha}\le C\|h_a\|_{L^1}$, 
and by \eqref{eq.densitydiff} we conclude that 
\begin{equation}
\label{eq.ha}
\sup_{a\in[0,\epsilon']}\|h_a\|_{\alpha}<\infty\,.
\end{equation} 
Combined with \eqref{eq.inftyalpha}, this
ensures that the constant $C$ is uniform in $a'$.
This concludes the proof of Lemma~\ref{l.varregularity}.
\end{proof}


\section{Switching locally from the parameter to the phase space}
\label{s.switch}
The aim of this section is to prove the following Proposition~\ref{p.mainestimate} which is the 
main estimate needed in verifying a law of large numbers for the squares of the blocks 
defined in the following Section~\ref{s.asip} (see Lemma~\ref{l.LLNy} therein). 
Its proof is given in the end of this section. Recall that in Theorem~\ref{t.main}, 
we assume $\sigma_0(\varphi)>0$. Hence, by Lemma~\ref{l.varregularity}, we 
find a constant $\epsilon>0$ so that $\sigma_a(\varphi)>0$, for all $a\in[0,\epsilon]$.  
(Note that by the assumption in Lemma~\ref{l.varregularity} the present constant $\epsilon$ 
is smaller than the constant $\epsilon'$ in Proposition~\ref{p.udc} which ensures that 
we can apply this proposition in the following.)
Let 
\begin{equation}
\label{eq.lambda0}
\lambda_0=\min(\lambda^{\min(\alpha/3,\kappa/2)},\rho^{-1/2})>1\,,
\end{equation}
where $\kappa>0$ is so small as in \eqref{eq.densitydiff} and Lemma~\ref{l.varregularity}.
Fix $\eta>0$ so small that
\begin{equation}
\label{eq.eta}
\Big(\frac{p_1\Lambda}{\lambda}\Big)^{\eta}\le\lambda_0\,.
\end{equation}
The expectation $E(\xi)$ of a function $\xi(a)$ is the integral 
$\epsilon^{-1}\int_0^\epsilon\xi(a) da$.

\begin{proposition}
\label{p.mainestimate}
There exists a constant $C$ (depending essentially only on $\varphi$ and the constants 
in the uniform exponential decay of correlation of the family $T_a$) such that 
\begin{equation}
\label{eq.mainestimate}
\Big|E\Big(\sum_{k=m}^{m+n-1}\xi_k(a)\Big)^2-n\Big|\le C\,,\quad\forall\ m
\ \text{and }1\le n\le\eta m/2\,.
\end{equation}

Furthermore, for $m$ and $n$ as in \eqref{eq.mainestimate} and $v=m-m^{1/4}$, if 
$\om\in\PP_v$ such that $\lambda_0^{-m^{1/4}}\le|x_v(\om)|\le n^{-3/\alpha}$ then 
\begin{equation}
\label{eq.mainestimatelocal}
\Big|E\Big[\Big(\sum_{k=m}^{m+n-1}\xi_k(a)\Big)^2\mid\{a\in\om\}\Big]-n\Big|\le C\,.
\end{equation}
\end{proposition}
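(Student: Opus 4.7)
The plan is to expand the squared sum, compare each correlation \(E(\xi_k\xi_{k+N})\) with its ``phase-space'' analogue
\[
\bar B_N := \frac{1}{\epsilon}\int_0^\epsilon\!\int\!\varphi_a\,\varphi_a\!\circ T_a^N\,d\mu_a\,da,
\]
and use \(\sigma_a(\varphi_a)=1\) to collapse the resulting sum. Indeed, \eqref{eq.sigma} and \eqref{eq.varphinormalised} give \(\bar B_0+2\sum_{i\ge1}\bar B_i=1\), so the ideal telescoping
\[
n\,\bar B_0+2\sum_{i=1}^{n-1}(n-i)\bar B_i
= n-2n\!\sum_{i\ge n}\bar B_i-2\!\sum_{i=1}^{n-1}i\,\bar B_i
= n+O(1)
\]
holds because \(\bar B_i=O(\rho^i)\) by Proposition~\ref{p.udc} and \(\sum i\rho^i<\infty\). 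Hence \eqref{eq.mainestimate} reduces to proving \(\sum_{m\le k,l\le m+n-1}|E(\xi_k\xi_l)-\bar B_{|l-k|}|=O(1)\).

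To control a single correlation I would decompose \([0,\epsilon]\) along \(\PP_k\) and discard the small-image elements via Lemma~\ref{l.goodpartition} with a stretched-exponentially small threshold \(d_k\), at total cost \(O(d_k^{1/2})\). On each surviving \(\om\in\PP_k\) I would change variables \(y=x_k(a)\); by \eqref{eq.distortion1} the pushed-forward density on \(x_k(\om)\) equals \(|\om|/|x_k(\om)|\) up to a multiplicative factor \(1+O(|x_k(\om)|^\alpha)\). I would then freeze the parameter inside the integrand at a reference point \(a_\om\in\om\); the freezing errors are absorbed by the Hölder continuity of \(a\mapsto h_a\) (via \eqref{eq.densitydiff}), of \(a\mapsto\sigma_a(\varphi)\) (Lemma~\ref{l.varregularity}), and of the dynamics (via \eqref{eq.xaa}, costing \(\Lambda^N|\om|\)). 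After summing over good \(\om\), the total density lies in \(V_\alpha\) with norm uniformly bounded in \(k\) thanks to condition~(III), so iterated applications of Proposition~\ref{p.udc}—used first to push the Lebesgue weight on \(x_k(\om)\) onto the invariant density \(h_{a_\om}\), and then to exploit the mean-zero condition \(\int\varphi_{a_\om}d\mu_{a_\om}=0\) together with decay of correlations—identify the main term as \(\bar B_N\), leaving a residual error of the schematic form \(\rho^k\Lambda^{N\alpha}+\lambda_0^{-k\delta}\).

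The quantitative choices \eqref{eq.lambda0} and \eqref{eq.eta} are calibrated precisely so that, under \(k\ge m\) and \(N\le n\le\eta m/2\), these residuals sum to \(O(1)\) over the \(n^2\) pairs \((k,l)\); the conditions \(\lambda_0\le\rho^{-1/2}\) and \((p_1\Lambda/\lambda)^\eta\le\lambda_0\) are what tame the cross-term \(\rho^k\Lambda^{N\alpha}\). For the local statement \eqref{eq.mainestimatelocal} I would rerun the scheme conditioned on \(\om\in\PP_v\), \(v=m-m^{1/4}\): the upper bound \(|x_v(\om)|\le n^{-3/\alpha}\) forces the parameter-freezing error to be \(O(n^{-3})\), harmlessly summable over \(n^2\) pairs; the lower bound \(|x_v(\om)|\ge\lambda_0^{-m^{1/4}}\), together with the \(\lambda^{m^{1/4}}\)-expansion of \(T_a^{m-v}\), ensures the image at iterate \(m\) is of macroscopic size, so the switch to phase space proceeds as in the unconditioned case. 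The main obstacle throughout is precisely that \((\xi_k)\) carries no invariant measure, so the equidistribution of \(x_k(a)\) (as \(a\) varies) must be reconstructed by hand from \(\PP_k\); the tight balancing between distortion losses \(\sim\lambda^{-k\alpha}\Lambda^{N\alpha}\), the mixing rate \(\rho^N\), and the exceptional-set measure \(\lambda_0^{-k\delta}\) is what drives the delicate constants in the statement.
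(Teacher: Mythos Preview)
The overall architecture—expand the square, compare each $E(\xi_k\xi_\ell)$ with a phase-space correlation, and collapse via $\sigma_a(\varphi_a)=1$—is correct and is what the paper does. The gap is in \emph{where} you partition. You propose to decompose along $\PP_k$, change variables $y=x_k(a)$, and then ``push the Lebesgue weight on $x_k(\om)$ onto the invariant density $h_{a_\om}$'' via Proposition~\ref{p.udc}. But after that change of variables the integrand on $x_k(\om)$ is $\varphi_{a_\om}(y)\,\varphi_{a_\om}(T_{a_\om}^{N}(y))$: there is \emph{no} forward iteration left on the first factor, so Proposition~\ref{p.udc} gives you nothing with which to replace $|x_k(\om)|^{-1}\chi_{x_k(\om)}$ by $h_{a_\om}$. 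Your fallback—that the summed density $\sum_\om\frac{|\om|}{|x_k(\om)|}\chi_{x_k(\om)}$ has uniformly bounded $V_\alpha$-norm ``thanks to condition~(III)''—is not justified either: condition~(III) only gives $\sum_\om\|x_k'|_\om\|_\infty^{-1}\le Ce^{k^{\delta_0}}$, which grows, and even a uniform norm bound would not by itself yield closeness to any $h_a$. The schematic residual $\rho^k\Lambda^{N\alpha}$ you quote has no derivation in this scheme.

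The paper's remedy is to partition \emph{below} the block, once, at the fixed level $v=m-m^{1/4}$, and switch there (Corollary~\ref{c.switch}). After the switch the integrand on $x_v(\om)$ becomes $\varphi_{a_0}(T_{a_0}^{k-v}(y))\,\varphi_{a_0}(T_{a_0}^{\ell-v}(y))$, and now the $k-v\ge m^{1/4}$ forward iterations on the first factor let Proposition~\ref{p.udc} (applied with the single indicator $\chi_{x_v(\om)}\in V_\alpha$) produce $\int\varphi_{a_0}\,\varphi_{a_0}\circ T_{a_0}^{\ell-k}\,d\mu_{a_0}$ with error $O(\rho^{k-v}/|x_v(\om)|)$. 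The size window $\lambda_0^{-m^{1/4}}\le|x_v(\om)|\le n^{-3/\alpha}$ is chosen exactly so that this mixing error and the $O(|x_v(\om)|^\alpha)$ distortion cost are both $O((m+n-k)^{-2})$, hence summable. The local estimate \eqref{eq.mainestimatelocal} is thus proved directly on a single $\om\in\PP_v$ with no further refinement to level $k$; the global estimate \eqref{eq.mainestimate} then follows by summing \eqref{eq.mainestimatelocal} over a refinement $\PP_v^*$ and using Lemma~\ref{l.goodpartition} to discard the short elements. Your plan for the local version—refining $\om$ down to $\PP_k|\om$ because ``the image at iterate $m$ is of macroscopic size''—inherits the same missing mixing step: a macroscopic image still carries Lebesgue weight, not $\mu_{a_0}$.
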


\begin{remark}
With some more effort inequalities \eqref{eq.mainestimate} and \eqref{eq.mainestimatelocal} 
can be proven to hold for all $n\ge1$ (the argument uses a similar construction as in 
\eqref{eq.xxxx} below; the upper bound of $|x_v(\om)|$ in \eqref{eq.mainestimatelocal} 
can be replaced, e.g., by $2\lambda_0^{-m^{1/4}}$).
\end{remark}

The following lemma provides us with a tool to switch locally from the parameter space 
to the phase space. This can then be used in the proof of Proposition~\ref{p.mainestimate} 
to gain informations about the sequence $x_j$ on the parameter space 
by considering the iterations $T_{a_0}^j$ on the phase space for a fixed 
parameter value $a_0$. Recall the definition \eqref{eq.varphia} of $\varphi_a$.

\begin{lemma}[Switching locally from parameter to phase space]
\label{l.switch}
There exists a constant $C$ such that the following holds. Let $v,v_1,...,v_{\ell_0}$, 
$1\le\ell_0\le4$, be integers satisfying 
$$
1\le v\le v_1,...,v_{\ell_0}\le v+\eta v\,.
$$
Let $\om$ be an interval such that there exists 
$\ome\in\PP_v$ with $\om\subset\ome$ and $|x_v(\om)|\ge\lambda_0^{-v}$.  
For all $a_0\in\om$ we have 
\begin{equation}
\label{eq.switch}
\frac1{|x_v(\om)|}\int_{x_v(\om)}\bigg|\prod_{\ell=1}^{\ell_0}\xi_{v_\ell}(x_v|_\om^{-1}(y))
-\prod_{\ell=1}^{\ell_0}\varphi_{a_0}(T_{a_0}^{v_\ell-v}(y))\bigg|dy
\le C\lambda_0^{-v}\,.
\end{equation}
\end{lemma}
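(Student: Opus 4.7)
The plan is to reduce the product-of-$\ell_0$-factors integrand to a single-factor comparison by telescoping, then split the single-factor difference by the triangle inequality into a ``parameter-perturbation'' piece and a ``phase-space-perturbation'' piece, handling each with the regularity tools already set up in Section~\ref{s.prel}.

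First I would observe that, since $\om\subset\ome\in\PP_v$, the map $x_v|_\om$ is a $C^{1+\alpha}$ diffeomorphism by Lemma~\ref{l.transversality}, and the lower bound $|x_v'|\ge c\la^v$ (from transversality combined with \eqref{eq.la}) yields $|\om|\le C\la^{-v}$, hence $|a-a_0|\le C\la^{-v}$ for every $a\in\om$. Writing $k_\ell=v_\ell-v\le\eta v$ and $a=x_v|_\om^{-1}(y)$, we have $\xi_{v_\ell}(a)=\varphi_a(T_a^{k_\ell}(y))$. Because $\sigma_a(\varphi)\ge c>0$ uniformly on $[0,\epsilon]$ (by Lemma~\ref{l.varregularity}), both $\|\varphi_a\|_\infty$ and $\|\varphi_{a_0}\|_\infty$ are bounded by $C\|\varphi\|_\alpha$ via \eqref{eq.inftyalpha}. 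Telescoping
$$\prod_{\ell=1}^{\ell_0}\xi_{v_\ell}\;-\;\prod_{\ell=1}^{\ell_0}\varphi_{a_0}\circ T_{a_0}^{k_\ell}=\sum_{\ell}\Bigl(\prod_{j<\ell}\xi_{v_j}\Bigr)\bigl(\xi_{v_\ell}-\varphi_{a_0}\circ T_{a_0}^{k_\ell}\bigr)\Bigl(\prod_{j>\ell}\varphi_{a_0}\circ T_{a_0}^{k_j}\Bigr),$$
together with $\ell_0\le 4$, reduces \eqref{eq.switch} to showing, uniformly in $\ell$, that
$$\int_{x_v(\om)}\bigl|\varphi_a(T_a^{k_\ell}(y))-\varphi_{a_0}(T_{a_0}^{k_\ell}(y))\bigr|\,dy\le C\la_0^{-v}|x_v(\om)|.$$

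For the single-factor comparison I would split by the triangle inequality into \emph{term (II)} $=|\varphi_a(T_{a_0}^k(y))-\varphi_{a_0}(T_{a_0}^k(y))|$ and \emph{term (I)} $=|\varphi_a(T_a^k(y))-\varphi_a(T_{a_0}^k(y))|$, writing $k=k_\ell$ and $\delta=C\Lambda^k|a-a_0|$. Term (II) is handled by the Hölder continuity $|\sigma_a(\varphi)-\sigma_{a_0}(\varphi)|\le C|a-a_0|^\kappa$ from Lemma~\ref{l.varregularity}, the density estimate \eqref{eq.densitydiff} for $\int\varphi\,d\mu_a$, and \eqref{eq.inftyalpha}; these give $\|\varphi_a-\varphi_{a_0}\|_\infty\le C|a-a_0|^\kappa\le C\la^{-\kappa v}\le C\la_0^{-2v}$ (using $\la_0\le\la^{\kappa/2}$ from \eqref{eq.lambda0}), which after integration over $x_v(\om)$ yields $\le C\la_0^{-2v}|x_v(\om)|\le C\la_0^{-v}|x_v(\om)|$ since $|x_v(\om)|\ge\la_0^{-v}$. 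For term (I), I would partition $x_v(\om)$ into a bulk $\bigcup_j J_j$ on which the orbits of $T_a$ and $T_{a_0}$ share combinatorics up to step $k-1$, plus a bad remainder, following the same inductive argument as Sublemma~\ref{sl.J}: the bad set has measure at most $C(p_1\Lambda/\la)^k|a-a_0|^\alpha$, which by \eqref{eq.eta} and \eqref{eq.lambda0} (specifically $(p_1\Lambda)^\eta\le\la_0\la^\eta$ and $\la_0^3\le\la^\alpha$) is $\le C\la_0^{-2v}$, contributing at most $C\la_0^{-2v}$ after multiplying by $2\|\varphi_a\|_\infty$.

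On each bulk interval $J_j$, \eqref{eq.xaa} gives $|T_a^k(y)-T_{a_0}^k(y)|\le\delta$, so the integrand is bounded by $\osc(\varphi_a,\delta,T_{a_0}^k(y))$. Changing variables $z=T_{a_0}^k(y)$ and using bounded distortion of $T_{a_0}^k$ on the ambient monotonicity interval (from \eqref{eq.distortion2}) together with $|J_j|/|T_{a_0}^k(J_j)|\le\la^{-k}$, the contribution of $J_j$ is $\le C\la^{-k}\int_{T_{a_0}^k(J_j)}\osc(\varphi_a,\delta,z)\,dz$. Summing over $j$, noting that at most $p_1^k$ distinct monotonicity intervals of $T_{a_0}^k$ meet $x_v(\om)$, and invoking $\int_0^1\osc(\varphi_a,\delta,z)\,dz\le|\varphi_a|_\alpha\delta^\alpha$, the bulk contribution of term (I) is bounded by $C(p_1/\la)^k\delta^\alpha\le C(p_1\Lambda^\alpha/\la)^{\eta v}\la^{-\alpha v}$, which again collapses to $\le C\la_0^{-2v}$ by the same two arithmetic identities. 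Summing everything and dividing by $|x_v(\om)|\ge\la_0^{-v}$ yields \eqref{eq.switch}. The main obstacle I anticipate is bookkeeping: verifying that all three exponential gains—$|a-a_0|^\kappa$, the bad-set measure from the combinatorial discrepancy, and the $V_\alpha$-oscillation in the bulk—simultaneously reduce to the target rate $\la_0^{-v}$, which is precisely the reason \eqref{eq.lambda0} and \eqref{eq.eta} were chosen with the exponents $\min(\alpha/3,\kappa/2)$ and $\eta$ calibrated against $(p_1\Lambda/\la)$.
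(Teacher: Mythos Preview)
Your proposal is essentially the paper's own argument: telescope the product, then split each factor into a parameter-perturbation piece (handled via Lemma~\ref{l.varregularity} and \eqref{eq.densitydiff}) and an orbit-comparison piece (handled via a combinatorial matching sublemma plus the $V_\alpha$-oscillation bound). Two small refinements are worth flagging.

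First, the combinatorial matching you need is not quite Sublemma~\ref{sl.J}. That sublemma compares the phase-space partitions $\PP_k(a)$ and $\PP_k(a')$ for two \emph{fixed} parameters, whereas here $a=x_v|_\om^{-1}(y)$ varies with $y$: you are really comparing the pushforward $x_v(\PP_{v_\ell}|\om)$ with $\PP_{v_\ell-v}(a_0)|x_v(\om)$. The paper isolates this as a separate Sublemma (call it the parameter-to-phase analogue of \ref{sl.J}); its inductive proof is indeed parallel to that of \ref{sl.J}, so your remark ``following the same inductive argument'' is accurate, but you should state and prove it, since the boundary-drift estimate now uses $|\om|$ (via Lipschitz partition points and \eqref{eq.xaa}) rather than $|a-a'|$.

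Second, your choice of intermediate point $\varphi_a(T_{a_0}^{k}(y))$ leaves the varying observable $\varphi_{a(y)}$ inside the oscillation integral for term~(I), which forces an extra step (bounding $\osc(\varphi_{a(y)},\delta,z)\le\osc(\varphi_{a_0},\delta,z)+2\|\varphi_{a(y)}-\varphi_{a_0}\|_\infty$) before you can invoke $|\varphi_{a_0}|_\alpha$. The paper instead inserts $\varphi_{a_0}(x_{v_\ell}(a))=\varphi_{a_0}(T_a^{k}(y))$, so the parameter-perturbation piece absorbs the observable change first and the oscillation integral involves only the fixed $\varphi_{a_0}$. Either works; the paper's ordering is slightly cleaner.
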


\begin{proof}
In order to prove Lemma~\ref{l.switch}, we need an ingredient similar to the one 
provided by Sublemma~\ref{sl.J}. The difference here is that we compare the 
partitions on the parameter space with the partitions on the phase space.

\begin{sublemma}
\label{sl.*J}
Let $v$, $\om$, and $a_0$ be as in the assertion of Lemma~\ref{l.switch}, and let 
$v\le\nu\le v+\eta v$. There exists 
a set of intervals $\PP_{v,\nu}(\om,a_0)$ such that for each $J\in\PP_{v,\nu}(\om,a_0)$ there exist 
$\om_1\in\PP_\nu|\om$ and $\om_2\in\PP_{\nu-v}(a_0)|x_v(\om)$ such that 
$J=x_v(\om_1)\cap\om_2$ and $x_v(\om_1)$ and 
$\om_2$ have the same combinatorics, i.e., for $a\in\om_1$ and $x\in\om_2$, 
$x_{v+i}(a)$ and $T_{a_0}^i(x)$ have the same combinatorics for $0\le i<\nu-v$. 
Furthermore, 
\begin{equation}
\label{eq.sl*J}
\Big|\bigcup_{J\in\PP_{v,\nu}(\om,a_0)}J\Big|\ge|x_v(\om)|(1-C\lambda_0^{-v})\,.
\end{equation}
\end{sublemma}

\begin{proof}
The proof is similar to the proof of Sublemma~\ref{sl.J}. 
For $\nu=v$ there is nothing to show (by definition $x_v(\om)=\PP_0|x_v(\om)$). Henceforth, we 
assume $\nu>v$.
Given $v$, we show inductively in $\nu>v$ that 
\begin{equation}
\label{eq.sl*Ji}
\Big|\bigcup_{J\in\PP_{v,\nu}(\om,a_0)}J\Big|\ge|x_v(\om)|
-C\sum_{j=v+1}^\nu p_1^{j-v}\lambda^{-(j-v-1)}\Lambda^{j-v-1}|\om|^\alpha\,.
\end{equation}
If $\nu\le v+\eta v$, 
inequality \eqref{eq.sl*Ji} implies then 
\begin{align*}
\Big|\bigcup_{J\in\PP_{v,\nu}(\om,a_0)}J\Big|&\ge|x_v(\om)|
-C^2p_1^{\nu-v}\lambda^{-(\nu-v)}\Lambda^{\nu-v}|\om|^\alpha\\
&\ge|x_v(\om)|
-C^2\Big(\frac{p_1\Lambda}{\lambda}\Big)^{\eta v}\lambda^{-\alpha v}
\ge|x_v(\om)|(1-C^3\lambda_0^{-v})\,,
\end{align*}
where in the last inequality we used the condition \eqref{eq.eta} on $\eta$ and the fact that 
$|x_v(\om)|\ge\lambda_0^{-v}$ (we used also that $|\om|\le C\lambda^{-v}$ which 
follows from Lemma~\ref{l.transversality}). 
This concludes the proof of Sublemma~\ref{sl.*J}.

Since, by properties~(i) and (iii) in Section~\ref{s.uniform}, the boundary points of 
$\om\in\PP_1(a_0)|x_v(\om)$ are $\alpha$-H\"older continuous
and the partition points $b_0(a),b_1(a),...,b_{p_0}(a)$ are Lipschitz continuous in $a$, this 
immediately shows \eqref{eq.sl*Ji} for $\nu=v+1$. 
Let $\nu>v+1$ and assume the assertion holds for $\nu-1$. 
For $J_0\in\P_{v,\nu-1}(\om,a_0)$, let $\om_2\in\PP_{\nu-v}(a_0)|J_0$ and $j=j(\om_2)$ such that 
$T_{a_0}^{\nu-v-1}(\om_2)\subset (b_{j-1}(a_0),b_j(a_0))$. 
By \eqref{eq.xaa} in Lemma~\ref{l.distortion} and 
by the Lipschitz continuity of $b_{j-1}(a)$ and $b_j(a)$, 
we derive
\begin{multline*}
|x_{\nu-1}(x_v|_\om^{-1}(\om_2))\cap (b_{j-1}(a),b_j(a'))|\\
\ge|x_{\nu-1}(x_v|_\om^{-1}(\om_2))|
-2L|\om|-2C\Lambda^{\nu-v-1}|\om|\,,\quad\forall a,a'\in\om\,.
\end{multline*}
If the right hand side is positive for an appropriate choice of $a,a'\in\om$, 
then we find $\om_1\in\PP_\nu|\om$, where $x_v(\om_1)$ 
and $\om_2$ have the same combinatorics. Furthermore, by the distortion 
estimate \eqref{eq.distortion2} in Lemma~\ref{l.distortion}, 
we find a constant $C$ such that 
\begin{align*}
|x_v(\om_1)\cap\om_2|&\ge|\om_2|
-C\frac{|\om_2|}{|x_{\nu-1}(x_v|_\om^{-1}(\om_2))|}\Lambda^{\nu-v-1}|\om|\\
&\ge|\om_2|
-C^2\lambda^{-(\nu-v-1)}\Lambda^{\nu-v-1}|\om|\,,
\end{align*}
where in the last inequality we used \eqref{eq.distortion3}.
Since there are maximal $p_1^{\nu-v-1}$ elements in $\PP_{v,\nu-1}(\om,a_0)$ and 
maximal $p_1$ elements in $\PP_{\nu-v}(a_0)|J_0$, we derive that 
$$
\Big|\bigcup_{J\in\PP_{v,\nu}(\om,a_0)}J\Big|
\ge\Big|\bigcup_{J_0\in\PP_{v,\nu-1}(\om,a_0)}J_0\Big|
-C^2p_1^{\nu-v}\lambda^{-(\nu-v-1)}\Lambda^{\nu-v-1}|\om|\,.
$$
By the induction assumption, this concludes the proof of \eqref{eq.slJi}.
\end{proof}

Recall that $\xi_{v_1}(a)=\varphi_a(x_{v_1}(a))$ (see \eqref{eq.xigen}). 
For $a=x_v|_\om^{-1}(y)$ and $a_0\in\om$, 
we write
\begin{multline*}
\xi_{v_1}(a)-\varphi_{a_0}(T_{a_0}^{v_1-v}(x_v(a)))\\
=\varphi_a(x_{v_1}(a))
-\varphi_{a_0}(x_{v_1}(a))
+\varphi_{a_0}(x_{v_1}(a))
-\varphi_{a_0}(T_{a_0}^{{v_1}-v}(x_v(a)))\,.
\end{multline*}
By Lemma~\ref{l.varregularity} and \eqref{eq.densitydiff}, we easily see that 
the difference of the first two terms on the right hand side is bounded from above 
by a constant times $|\om|^\kappa$. To estimate the integral over 
the difference of the last two terms we use the partition given by 
Sublemma~\ref{sl.*J}. First, observe that, by Lemma~\ref{l.varregularity} 
and \eqref{eq.inftyalpha}, we find a constant $C$ only 
dependent on $\varphi$ (and, in particular, not on $a$) so that 
\begin{equation}
\label{eq.alphal1infty}
\max(\|\varphi_a\|_\alpha,\|\varphi_a\|_{L^1},\|\varphi_a\|_\infty)
\le C\|\varphi\|_\alpha\le C^2\,,\qquad\forall a\in[0,\epsilon]\,.
\end{equation}
If $J\in\PP_{v,v_1}(\om,a_0)$ and $y\in J$, then by 
\eqref{eq.xaa} in Lemma~\ref{l.distortion} we have 
$$
\big|x_{v_1}(x_v|_\om^{-1}(y))-T_{a_0}^{{v_1}-v}(y)\big|\le C\Lambda^{{v_1}-v}|\om|\,,
$$
which implies that 
\begin{align*}
&\int_{x_v(\om)}\big|\varphi_{a_0}(x_{v_1}(x_v|_\om^{-1}(y)))
-\varphi_{a_0}(T_{a_0}^{{v_1}-v}(y))\big|dy\\
&\le\sum_{J\in\PP_{v,{v_1}}(\om,a_0)}
\int_{J}\osc(\varphi_{a_0},C\Lambda^{{v_1}-v}|\om|,T_{a_0}^{{v_1}-v}(y))dy
+C|x_v(\om)|\lambda_0^{-v}\\
&\le\sum_{J\in\PP_{v,{v_1}}(\om,a_0)}
C\lambda^{-({v_1}-v)}
\int_{T_{a_0}^{{v_1}-v}(J)}\osc(\varphi_{a_0},C\Lambda^{{v_1}-v}|\om|,z)dz
+C|x_v(\om)|\lambda_0^{-v}\\
&\le C^3\Big(\frac{p_1\Lambda^\alpha}{\lambda}\Big)^{{v_1}-v}|\om|^\alpha
+C|x_v(\om)|\lambda_0^{-v}\,.
\end{align*}
Altogether, we obtain (recall \eqref{eq.alphal1infty})
\begin{multline*}
\frac1{|x_v(\om)|}\int_{x_v(\om)}
\Big|\xi_{v_1}(x_v|_\om^{-1}(y))-\varphi_{a_0}(T_{a_0}^{{v_1}-v}(y))\Big|
\prod_{\ell=2}^{\ell_0}|\xi_{v_\ell}(x_v|_\om^{-1}(y))|dy\\
\le C\Big(
\Big(\frac{p_1\Lambda^\alpha}{\lambda}\Big)^{\eta v}\frac{|\om|^\alpha}{|x_v(\om)|}
+\lambda_0^{-v}+\frac{|\om|^\kappa}{|x_v(\om)|}\Big)
\le C^2\lambda_0^{-v}\,,
\end{multline*}
where
in the last inequality we used the assumption that $|x_v(\om)|\ge\lambda_0^{-v}$, the 
definition \eqref{eq.lambda0} of $\lambda_0$, and 
the condition \eqref{eq.eta} on $\eta$. 
Then, similarly we derive
\begin{multline*}
\frac1{|x_v(\om)|}\int_{x_v(\om)}|\varphi_{a_0}(T_{a_0}^{{v_1}-v}(y))|
\Big|\xi_{v_2}(x_v|_\om^{-1}(y))-\varphi_{a_0}(T_{a_0}^{{v_2}-v}(y))\Big|\\
\prod_{\ell=3}^{\ell_0}|\xi_{v_\ell}(x_v|_\om^{-1}(y))|dy
\le C\lambda_0^{-v}\,,
\end{multline*}
and so on. This concludes the proof of Lemma~\ref{l.switch}.
\end{proof}

\begin{corollary}
\label{c.switch}
There exists a constant $C$ such that the following holds. Let $v,v_1,...,v_{\ell_0}$, 
$1\le\ell_0\le4$, be positive integers as in the assertion of Lemma~\ref{l.switch} and 
let $\om$ be an interval such that there exists 
$\ome\in\PP_v$ with $\om\subset\ome$ and $|x_v(\om)|\ge\lambda_0^{-v}$.  
For all $a_0\in\om$ we have 
$$
\frac1{|\om|}\int_\om\prod_{\ell=1}^{\ell_0}\xi_{v_\ell}(a)da
=\frac{1+O(|x_v(\om)|^\alpha)}{|x_v(\om)|}\int_{x_v(\om)}
\prod_{\ell=1}^{\ell_0}\varphi_{a_0}(T_{a_0}^{v_\ell-v}(y))dy+O(\lambda_0^{-v})\,.
$$
\end{corollary}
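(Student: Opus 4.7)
The plan is to perform a change of variables $y=x_v(a)$ on the left-hand side, exploit the nearly-constant behaviour of $|x_v'|$ on $\om$ via the distortion estimate \eqref{eq.distortion1}, and then invoke Lemma~\ref{l.switch} to replace the $\xi_{v_\ell}$'s by the $\varphi_{a_0}\circ T_{a_0}^{v_\ell-v}$'s in the resulting integral over $x_v(\om)$.

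First I would note that since $\om\subset\ome\in\PP_v$, the map $x_v|_\om$ is $C^{1+\alpha}$ and strictly monotone, so the change of variables
\[
\int_\om\prod_{\ell=1}^{\ell_0}\xi_{v_\ell}(a)\,da
=\int_{x_v(\om)}\prod_{\ell=1}^{\ell_0}\xi_{v_\ell}(x_v|_\om^{-1}(y))\,
\frac{dy}{|x_v'(x_v|_\om^{-1}(y))|}
\]
is legitimate. Next, fix any reference point $a_*\in\om$. By Lemma~\ref{l.distortion}, more precisely by \eqref{eq.distortion1} applied with $a_1=a$ and $a_2=a_*$, we have
\[
\frac{1}{|x_v'(a)|}=\frac{1+O(|x_v(\om)|^\alpha)}{|x_v'(a_*)|},\qquad\forall a\in\om.
\]
Applying the same identity to the tautology $|\om|=\int_{x_v(\om)}dy/|x_v'(x_v|_\om^{-1}(y))|$ yields
\[
|\om|=\frac{|x_v(\om)|\,(1+O(|x_v(\om)|^\alpha))}{|x_v'(a_*)|}.
\]
Dividing the change-of-variables formula by $|\om|$ and combining the two displays, the factors $|x_v'(a_*)|$ cancel and I obtain
\[
\frac{1}{|\om|}\int_\om\prod_{\ell=1}^{\ell_0}\xi_{v_\ell}(a)\,da
=\frac{1+O(|x_v(\om)|^\alpha)}{|x_v(\om)|}
\int_{x_v(\om)}\prod_{\ell=1}^{\ell_0}\xi_{v_\ell}(x_v|_\om^{-1}(y))\,dy.
\]

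Now I would apply Lemma~\ref{l.switch}, which under the present hypotheses on $v$, the $v_\ell$, $\om$ and $a_0$ gives
\[
\frac{1}{|x_v(\om)|}\int_{x_v(\om)}\prod_{\ell=1}^{\ell_0}\xi_{v_\ell}(x_v|_\om^{-1}(y))\,dy
=\frac{1}{|x_v(\om)|}\int_{x_v(\om)}\prod_{\ell=1}^{\ell_0}\varphi_{a_0}(T_{a_0}^{v_\ell-v}(y))\,dy
+O(\lambda_0^{-v}).
\]
Multiplying this by the prefactor $1+O(|x_v(\om)|^\alpha)$, and using that $|x_v(\om)|\le 1$ so that this prefactor is bounded, the additive error stays $O(\lambda_0^{-v})$. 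This yields exactly the claimed formula.

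I do not anticipate a genuine obstacle: the technical heart of the statement is already contained in Lemma~\ref{l.switch}, and the remaining work is pure bookkeeping — a one-dimensional change of variables together with the distortion bound \eqref{eq.distortion1}. The only point requiring a little care is to verify that the multiplicative error $1+O(|x_v(\om)|^\alpha)$ introduced by distortion does not amplify the additive error $O(\lambda_0^{-v})$ from Lemma~\ref{l.switch}, which is immediate because $|x_v(\om)|^\alpha$ is uniformly bounded.
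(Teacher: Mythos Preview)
Your proof is correct and follows essentially the same approach as the paper: a change of variables $y=x_v(a)$, the distortion estimate \eqref{eq.distortion1} to extract the $1+O(|x_v(\om)|^\alpha)$ factor, and then Lemma~\ref{l.switch}. You in fact spell out more carefully than the paper does how the derivative factors cancel and why the multiplicative distortion error does not spoil the additive $O(\lambda_0^{-v})$ from Lemma~\ref{l.switch}.
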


\begin{proof}
Doing the change of variables $y=x_v(a)$, $a\in\om$,
by the distortion estimate \eqref{eq.distortion1} in Lemma~\ref{l.distortion}, we derive 
\begin{equation}
\label{eq.switchdist}
\frac1{|\om|}\int_\om\prod_{\ell=1}^{\ell_0}\xi_{v_\ell}(a)da
=\frac{1+O(|x_v(\om)|^\alpha)}{|x_v(\om)|}
\int_{x_v(\om)}\prod_{\ell=1}^{\ell_0}\xi_{v_\ell}(x_v|_\om^{-1}(y))dy\,.
\end{equation}
Applying Lemma~\ref{l.switch} concludes the proof.
\end{proof}

\subsection{Proof of Proposition \ref{p.mainestimate}}
We are going to show \eqref{eq.mainestimatelocal}.
Let $\om$ be as in the assertion.
We write
$$
\frac1{|\om|}\int_\om\Big(\sum_{k=m}^{m+n-1}\xi_k\Big)^2
=\frac1{|\om|}\sum_{k=m}^{m+n-1}
\Big(\int_\om\xi_k^2+2\sum_{\ell=k+1}^{m+n-1}\int_\om\xi_k\xi_\ell\Big)\,.
$$
Hence, in order to prove \eqref{eq.mainestimatelocal}, 
it is sufficient to show that there is a constant $C$ such that
\begin{equation}
\label{eq.firstterm}
\Big|1-\frac1{|\om|}\int_\om\Big(\xi_k^2+2\sum_{\ell=k+1}^{m+n-1}\xi_k\xi_l\Big)\Big|
\le C(m+n-k)^{-2}\,.
\end{equation}
Let $a_0\in\om$.
By Corollary~\ref{c.switch}, we have 
\begin{align*}
&\frac1{|\om|}\int_\om\Big(\xi_k^2+2\sum_{\ell=k+1}^{m+n-1}\xi_k\xi_l\Big)\\
&=\frac1{|x_v(\om)|}
\int_{x_v(\om)}\Big(\varphi_{a_0}^2\circ T_{a_0}^{k-v}
+2\sum_{\ell=k+1}^{m+n-1}\varphi_{a_0}\circ T_{a_0}^{k-v}\varphi_{a_0}\circ T_{a_0}^{\ell-v}\Big)\\
&\quad+O\big((m+n-k)(\lambda_0^{-(m-m^{1/4})}+|x_v(\om)|^\alpha)\big)\,.
\end{align*}
Proposition~\ref{p.udc} gives (recall also \eqref{eq.alphal1infty}), for all $\ell\ge k$,
\begin{multline*}
\int_{x_v(\om)}\varphi_{a_0}\circ T_{a_0}^{k-v}\varphi_{a_0}\circ T_{a_0}^{\ell-v}dm
=|x_v(\om)|\int\varphi_{a_0}\varphi_{a_0}\circ T_{a_0}^{\ell-k}d\mu_{a_0}
+O(\rho^{k-v})\,.
\end{multline*}
By the normalisation \eqref{eq.varphinormalised} and applying 
once more Proposition~\ref{p.udc}, we have
$$
\int\Big(\varphi_{a_0}^2
+2\sum_{\ell=k+1}^{m+n-1}\varphi_{a_0}\varphi_{a_0}\circ T_{a_0}^{\ell-k}\Big)d\mu_{a_0}
=1+O(\rho^{m+n-k})\,.
$$
Hence, we conclude 
\begin{multline*}
\Big|1-\frac1{|\om|}\int_\om\Big(\xi_k^2+2\sum_{\ell=k+1}^{m+n-1}\xi_k\xi_l\Big)\Big|\\
\le C(m+n-k)\big(\lambda_0^{-(m-m^{1/4})}+|x_v(\om)|^\alpha+\rho^{k-v}/|x_v(\om)|\big)+C\rho^{m+n-k}\,.
\end{multline*}
Regarding \eqref{eq.firstterm}, the first and last term on the right hand side are fine, and also 
the second term since by assumption $|x_v(\om)|\le n^{-3/\alpha}$. For the 
remaining term we use the lower bound $|x_v(\om)|\ge\lambda_0^{-m^{1/4}}$ which gives 
(recall the definition of $\lambda_0$ in \eqref{eq.lambda0})
$$
\rho^{k-v}/|x_v(\om)|\le\rho^{m^{1/4}}\lambda_0^{m^{1/4}}\le\lambda_0^{-m^{1/4}}\,.
$$

In order to prove Proposition~\ref{p.mainestimate}, it is only left to prove \eqref{eq.mainestimate} 
which follows now easily from \eqref{eq.mainestimatelocal} combined with 
Lemma~\ref{l.goodpartition} in which we take $d_v=Ce^{-v^{1/5}}$ (where $C$ is taken 
so that $d_v\ge\lambda_0^{-m^{1/4}}$).
Recall that by Lemma~\ref{l.goodpartition}, for each $v\ge1$, 
there is an exceptional set $E_v\subset\PP_v$ so that $|E_v|\le Cd_v^{1/2}$ and 
$|x_v(\om)|\ge d_v$ for all $\om\in\PP_v\setminus E_v$.
Let $\PP_v^*$ be a refinement of the partition $\PP_v\setminus E_v$ so that 
for $\om\in\PP_v^*$ we have $d_v\le|x_v(\om)|\le2d_v$. 
Since $d_v\ge\lambda_0^{-m^{1/4}}$, by \eqref{eq.mainestimatelocal}, we obtain
\begin{align*}
E\Big(\sum_{k=m}^{m+n-1}\xi_k(a)\Big)^2
&=O(|E_v|)+\sum_{\om\in\PP_v^*}|\om|(n+O(1))/\epsilon\\
&=\frac{\epsilon-|E_v|}\epsilon n+O(1)=n+O(d_v^{1/2}n)+O(1)\,.
\end{align*}
Since $d_v^{1/2}n\le Ce^{-m^{1/5}/C}m=o(1)$, this concludes the proof of 
\eqref{eq.mainestimate} and, thus, the proof of Proposition~\ref{p.mainestimate}.


\section{Proof of Theorem~\ref{t.main} via Skorokhod's representation theorem}
\label{s.asip}
As mentioned in the introduction, in order to prove Theorem~\ref{t.main}, 
we go along the classical, probabilistic approach in \cite{ps}. 
It consists in rearranging the Birkhoff sum as a sum of 
blocks of polynomial size where we then approximate the blocks by a martingale 
and apply Skorokhod's representation 
theorem to it. The optimal power of the 
polynomial size of the blocks in our setting is $2/3$ 
which gives then an error exponent $\gamma>2/5$ in the almost sure invariance principle
in Theorem~\ref{t.main}. 
Being familiar with the technique in \cite{ps}, it is natural to ask if the error exponent could 
be decreased to $\gamma>1/3$: If one considers a fixed dynamical system as, e.g., in 
\cite{hk}, then one could take $1/2$ as the power of the polynomial size of the block and 
when separating these blocks by small blocks of logarithmic (or very small polynomial) size 
then this would lead to an error exponent $\gamma>1/3$. However, in our setting  
the estimate~\eqref{eq.43} below is not good enough to be able to establish an error 
exponent $\gamma>1/3$, and we don't know how to improve this estimate. 
In the recent work \cite{gouezel}, Gou\"ezel uses spectral methods to show 
an almost sure invariance principle and he obtains remarkable error estimates which are 
independent on the dimension of the process. 
For example for the maps studied in \cite{hk} he gets the error exponent $\gamma>1/4$. 
However, we didn't 
find an easy way to apply these spectral methods to our setting. 
The strategy via Skorokhod's representation theorem is also convenient here 
because of its simplicity. Nevertheless, since our setting is rather special, 
we have to go step by step through the method of 
building blocks and approximating by martingales. 
In particular, we cannot apply directly the main statement in \cite[Chapter 7]{ps} 
since the functions $\xi_i$ are maps on the parameter space where the concept of 
invariant measures does not make any sense and we are not able to verify 
nor to formulate an analog of a strong mixing condition (cf. \cite[7.1.2]{ps}) in our setting. 
However, they are statements in \cite{ps} which we can take over more or less one to one. 
This will keep this section of a reasonable length.

\subsection{Building the blocks}
\label{ss.blocks}
Fix a constant $\epsilon>0$ as in the beginning of Section~\ref{s.switch}. This ensures 
that we can apply all the results in Sections~\ref{s.prel} and \ref{s.switch}.
Take $\delta>0$ sufficiently small (to be determined later on; see, e.g., 
the proof of Lemma~\ref{l.LLNY} below).
We approximate the functions $\xi_i:[0,\epsilon]\to[0,1]$, $i\ge1$, 
by stepfunctions $\chi_i$. 
In order to do that, 
we introduce the $\sigma$-fields $\FF_i$ which are generated by the 
intervals in $\PP_{r_i}(=\PP_{r_i}|[0,\epsilon])$ where $r_i=i+[i^{\delta}]$. 
Observe that, by \eqref{eq.distortion3} and \eqref{eq.la}, 
\begin{equation}
\label{eq.ri}
|x_i(\om)|\le C\lambda^{-i^{\delta}}\,,\qquad\forall \om\in\PP_{r_i}\,.
\end{equation}
The stepfunctions $\chi_i$ are defined as
$\chi_i=E(\xi_i\mid\FF_i)$.
Recall the constants $\rho$ in Proposition~\ref{p.udc}, $\lambda_0$ in \eqref{eq.lambda0}, 
and $\eta$ in 
\eqref{eq.eta}.
We introduce a constant $\rho_0$ defined as
\begin{equation}
\label{eq.rho0}
\rho_0=\max(\rho^{\eta/(1+\eta)},\lambda_0^{-1/(1+\eta)})<1\,.
\end{equation}
We have the following basic properties.

\begin{lemma}
\label{l.chiapprox}
For almost every $a\in[0,\epsilon]$, we have
\begin{equation}
\label{eq.chia1}
|\xi_i(a)-\chi_i(a)|\le \lambda^{-\alpha i^{\delta}/8}\,,\qquad\text{for all but finitely many } i\ge1\,.
\end{equation}

Furthermore, there exists a constant $C$ such that for all $i\ge1$ and $j\ge0$ there exists an 
exceptional set of intervals $E_{i,j}$ so that for a.e. $a\in[0,\epsilon]\setminus E_{i,j}$
\begin{equation}
\label{eq.chia2}
|E(\xi_{i+j}\mid\FF_i)(a)|=|E(\chi_{i+j}\mid\FF_i)(a)|\le C\min(1,\rho_0^{j-2i^{\delta}})\,,
\end{equation}
where $E_{i,j}=\emptyset$, for $j\le2i^{\delta}$, and 
$|E_{i,j}|\le C\rho_0^{(j-i^{\delta})/2}$, otherwise.
\end{lemma}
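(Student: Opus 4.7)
The plan is to handle Part~(i) by an $L^1$-plus-Borel--Cantelli argument, and Part~(ii) by localising on $\om\in\PP_{r_i}$, switching to the phase space via Corollary~\ref{c.switch} and using the uniform exponential decay of Proposition~\ref{p.udc}; the constants in~\eqref{eq.lambda0}--\eqref{eq.rho0} are calibrated precisely so that the resulting errors match the target bound $C\rho_0^{j-2i^\delta}$.

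For Part~(i), fix $\om\in\PP_{r_i}$; since $r_i>i$, the map $x_i$ is smooth on $\om$ and~\eqref{eq.ri} gives $|x_i(\om)|\le C\lambda^{-i^\delta}$. I would estimate
\begin{equation*}
\int_\om|\xi_i-\chi_i|\,da \le \frac{1}{|\om|}\int_\om\!\int_\om|\xi_i(a)-\xi_i(a')|\,da\,da',
\end{equation*}
change variables $y=x_i(a)$, apply the distortion~\eqref{eq.distortion1}, and absorb the $a$-dependence of the normalisation using Lemma~\ref{l.varregularity} and~\eqref{eq.alphal1infty}; the inner difference then reduces to the oscillation integral $\int_{x_i(\om)}\osc(\varphi_{a_0},|x_i(\om)|,y)\,dy\le|\varphi_{a_0}|_\alpha|x_i(\om)|^\alpha$. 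Summing over $\om$ yields $\int_0^\epsilon|\xi_i-\chi_i|\,da\le C\lambda^{-\alpha i^\delta}$. Markov's inequality then gives $|\{|\xi_i-\chi_i|>\lambda^{-\alpha i^\delta/8}\}|\le C\lambda^{-7\alpha i^\delta/8}$, which is summable in $i$, and Borel--Cantelli finishes Part~(i).

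For Part~(ii), the equality $E(\xi_{i+j}\mid\FF_i)=E(\chi_{i+j}\mid\FF_i)$ is the tower property ($\FF_i\subset\FF_{i+j}$ since $r_i\le r_{i+j}$); when $j\le 2i^\delta$ the uniform bound $\|\xi_k\|_\infty\le C$ from~\eqref{eq.alphal1infty} suffices, so I assume $j>2i^\delta$. For $\om\in\PP_{r_i}$ with $|x_{r_i}(\om)|\ge d:=\rho_0^{j-i^\delta}$, and under the range assumption $j-[i^\delta]\le\eta r_i$ that makes Corollary~\ref{c.switch} applicable with $v=r_i$ and $v_1=i+j$, I pick $a_0\in\om$ and obtain
\begin{equation*}
\frac{1}{|\om|}\int_\om\xi_{i+j}\,da = \frac{1+O(|x_{r_i}(\om)|^\alpha)}{|x_{r_i}(\om)|}\int_{x_{r_i}(\om)}\varphi_{a_0}\circ T_{a_0}^{j-[i^\delta]}\,dm + O(\lambda_0^{-r_i}).
\end{equation*}
Since $\int\varphi_{a_0}\,d\mu_{a_0}=0$ by~\eqref{eq.varphinormalised}, Proposition~\ref{p.udc} bounds the integral by $C\rho^{j-[i^\delta]}$, and the arithmetic of~\eqref{eq.lambda0}--\eqref{eq.rho0} converts the resulting $\rho^{j-i^\delta}/d+\lambda_0^{-r_i}$ into $C\rho_0^{j-2i^\delta}$; the exceptional set $E_{i,j}=\bigcup\{\om\in\PP_{r_i}:|x_{r_i}(\om)|<d\}$ has measure $\le Cd^{1/2}=C\rho_0^{(j-i^\delta)/2}$ by Lemma~\ref{l.goodpartition}. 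The main obstacle is the complementary range $j>\eta r_i+[i^\delta]$, where a single switch is unavailable; I would treat it by iterating through intermediate times $r_i=v^{(0)}<\dots<v^{(K)}=i+j$ with $v^{(k+1)}-v^{(k)}\le\eta v^{(k)}$, compounding the exceptional sets by a union bound and converting the accumulated $\rho$-losses back to $\rho_0$ via~\eqref{eq.rho0}, which was chosen precisely to keep this accounting closed.
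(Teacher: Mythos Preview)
Your Part~(ii) is close to the paper's argument, but the ``iteration through intermediate times'' you propose for the range $j>\eta r_i$ is unnecessarily elaborate. The paper handles all $j>2i^\delta$ in a single stroke by setting $k=\max\bigl(r_i,\,(i+j)/(1+\eta)\bigr)$ and using the tower property $E(\xi_{i+j}\mid\FF_i)=E\bigl(E(\xi_{i+j}\mid\widetilde\PP_k)\mid\FF_i\bigr)$, where $\widetilde\PP_k$ is the $\sigma$-field generated by $\PP_k$; it then suffices to bound $|\om|^{-1}\int_\om\xi_{i+j}$ for $\om\in\PP_k$, and a single application of Lemma~\ref{l.switch} at level $k$ (not $r_i$) does the job, since by construction $k\le i+j\le(1+\eta)k$. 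Your iterated scheme would work but adds bookkeeping for no gain.

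There is a genuine gap in your Part~(i). After the change of variables you obtain, for $\om\in\PP_{r_i}$,
\[
\int_\om|\xi_i-\chi_i|\,da\ \lesssim\ \frac{|\om|}{|x_i(\om)|}\int_{x_i(\om)}\osc(\varphi_{a_0},|x_i(\om)|,y)\,dy\,,
\]
and your bound $\int_{x_i(\om)}\osc\le|\varphi_{a_0}|_\alpha|x_i(\om)|^\alpha$ then yields a contribution $|\om|\,|x_i(\om)|^{\alpha-1}$. Summing this over $\om$ does \emph{not} give $C\lambda^{-\alpha i^\delta}$: the factor $|x_i(\om)|^{\alpha-1}$ (equivalently, after grouping, $|\tilde\om|/|x_i(\tilde\om)|$ for $\tilde\om\in\PP_i$) blows up on partition elements whose image is small, and there is no a~priori control on $\sum_{\tilde\om}|\tilde\om|/|x_i(\tilde\om)|$. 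The paper fixes this by introducing the exceptional set $E_i=\{\tilde\om\in\PP_i:|x_i(\tilde\om)|\le\lambda^{-\alpha i^\delta/2}\}$, whose measure is $\le C\lambda^{-\alpha i^\delta/4}$ by Lemma~\ref{l.goodpartition}; off $E_i$ one has $1/|x_i(\tilde\om)|\le\lambda^{\alpha i^\delta/2}$ and the sum closes with the exponent budget $1/8=1-1/2-1/4-1/8$. Your Markov--Borel--Cantelli conclusion is the same as the paper's, but the $L^1$ estimate feeding it needs this exceptional-set step.
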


\begin{proof}
We show first \eqref{eq.chia1}.
Let $\om\in\PP_i$ and fix an arbitrary parameter $a_\om$ in $\om$.
For $\ome\in\PP_{r_i}|\om$ and $a_0\in\ome$, by the definition of $\varphi_a$, \eqref{eq.densitydiff}, 
and Lemma~\ref{l.varregularity}, we have
$$
\chi_i(a_0)=\frac1{|\ome|}\int_{\ome}\xi_i(a)da=\frac1{|\ome|}
\int_{\ome}\varphi_{a_\om}(x_i(a))da+O(|\om|^\kappa)\,,
$$
which implies that, for a.e. $a_0\in\ome$,
$$
|\varphi_{a_\om}(x_i(a_0))-\chi_i(a_0)|
\le\esssup_{a\in\ome}\varphi_{a_\om}(x_i(a))-\essinf_{a\in\ome}\varphi_{a_\om}(x_i(a))
+C|\om|^\kappa
$$
Recall the estimate \eqref{eq.ri}. Let 
$E_i=\{\om\in\PP_i\mid|x_i(\om)|\le\lambda^{-\alpha i^\delta/2}\}$.
We get
\begin{align*}
&|\{|\xi_i-\chi_i|\ge \lambda^{-\alpha i^{\delta}/8}\}|\\
&\le \lambda^{\alpha i^{\delta}/8}\Big[C|E_i|
+\sum_{\om\in\PP_i\setminus E_i}\int_\om|\xi_i(a)-\chi_i(a)|da\Big]\\
&\le \lambda^{\alpha i^{\delta}/8}\Big[C|E_i|
+\sum_{\om\in\PP_i\setminus E_i}\frac{C|\om|}{|x_i(\om)|}
\Big(\int_{x_i(\om)}|\varphi_{a_\om}(y)-\chi_i(x_i|_\om^{-1}(y))|dy+C|\om|^\kappa\Big)\Big]\\
&\le \lambda^{\alpha i^{\delta}/8}\Big[C|E_i|
+\sum_{\om\in\PP_i\setminus E_i}\frac{C|\om|}{|x_i(\om)|}
\Big(\int_0^1\osc(\varphi_{a_\om},C\lambda^{-i^{\delta}},y)dy+2C|\om|^\kappa\Big)\Big]\,.
\end{align*}
The integral is bounded by a constant times 
$\lambda^{-\alpha i^{\delta}}$ (recall \eqref{eq.alphal1infty}). By Lemma~\ref{l.goodpartition}, 
we have $|E_i|\le C\lambda^{-\alpha i^\delta/4}$. It follows 
\begin{equation}
\label{eq.chiahelp}
|\{|\xi_i-\chi_i|\ge \lambda^{-\alpha i^{\delta}/8}\}|
\le C\lambda^{-\alpha^\delta/8}\,.
\end{equation}
By Borel-Cantelli this concludes the proof of \eqref{eq.chia1}.

We turn to the proof of \eqref{eq.chia2}. If $j\le2i^{\delta}$, there is nothing to prove. 
If $j\ge2i^{\delta}$, let $k=\max(r_i,(i+j)/(1+\eta))$. 
Denoting by $\widetilde \PP_k$ the $\sigma$-field generated by the intervals in $\PP_k$, 
observe that we have
$$
|E(\xi_{i+j}\mid\FF_i)(a)|=|E(E(\xi_{i+j}\mid\widetilde\PP_k)\mid\FF_i)(a)|\,.
$$
Hence, in order to prove \eqref{eq.chia2}, it is sufficient to consider the terms
$$
\frac1{|\om|}\Big|\int_\om\xi_{i+j}(a)da\Big|\,,\qquad\om\in\PP_k\,.
$$
For $\om\in\PP_k$, we have, by \eqref{eq.distortion1}, 
$$
\frac1{|\om|}\Big|\int_\om\xi_{i+j}(a)da\Big|
\le \frac{C}{|x_k(\om)|}\Big|\int_{x_k(\om)}\xi_{i+j}(x_k|_\om^{-1}(y))dy\Big|\,.
$$
Regarding Lemma~\ref{l.switch}, 
we can only give a good estimate of the right hand side, if the image of $\om$ under $x_k$ 
is sufficiently large. Hence, we define the exceptional set
$$
E_{i,j}=\{\om\in\PP_k\mid|x_k(\om)|\le\rho_0^{j-i^{\delta}}\}\,.
$$
By the definition of $k$, we derive 
that $\rho_0^{j-i^\delta}$ is smaller than $\rho_0^{k^\delta/2}$ for $j\ge2 i^\delta$ small 
and smaller than $\rho_0^{\eta k}$ for $j$ large. In particular, this implies that 
$\rho_0^{j-i^\delta}$ is decaying stretched exponentially fast in $k$. 
Applying Lemma~\ref{l.goodpartition}, we derive that 
$|E_{i,j}|\le C\rho_0^{(j-i^{\delta})/2}$ for some constant $C$ (since the constants 
in the above two upper bounds for $\rho_0^{j-i^\delta}$ are uniform, 
the proof of Lemma~\ref{l.goodpartition} easily shows
that this constant $C$ can be chosen uniformly in $i$ and $j$). 
On the other hand, by the definition of $k$, $\rho_0^{j-i^\delta}$ is greater than 
$\rho_0^{(1+\eta)k}$ which in turn is, by the definition \eqref{eq.rho0} of $\rho_0$, 
greater than $\lambda_0^{-k}$.
In other words $|x_k(\om)|\ge\lambda_0^{-k}$, for $\om\in\PP_k\setminus E_{i,j}$, 
and we can apply \eqref{eq.switch} in Lemma~\ref{l.switch} which gives 
(observe that by the definition of $k$ we have $k\le i+j\le k+\eta k$)
$$
\frac1{|x_k(\om)|}\Big|\int_{x_k(\om)}
\xi_{i+j}(x_k|_\om^{-1}(y))-\varphi_{a_0}(T_{a_0}^{i+j-k}(y))dy\Big|
\le C\lambda_0^{-k}\le C\rho_0^{j-i^\delta}\,.
$$
By Proposition~\ref{p.udc} and \eqref{eq.alphal1infty}, we get
$$
\frac1{|x_k(\om)|}\Big|\int_{x_k(\om)}\varphi_{a_0}(T_{a_0}^{i+j-k}(y))dy\Big|\le C\rho^{i+j-k}\,.
$$
Since $i+j-k\ge\eta(j-i^\delta)/(1+\eta)$ for all $j\ge2i^\delta$ and for $k$ as defined above,  
by the definition \eqref{eq.rho0} of $\rho_0$, we conclude 
$$
\frac1{|\om|}\Big|\int_\om\xi_{i+j}(a)da\Big|\le C(\rho^{\eta(j-i^\delta)/(1+\eta)}+\rho_0^{j-i^\delta})
\le2C\rho_0^{j-i^\delta}\,,
$$
which concludes the proof of \eqref{eq.chia2}.
\end{proof}

We define blocks of integers $I_j$, $j\ge1$, inductively where $I_1=\{1\}$ and $I_j$ contains 
$[j^{2/3}]$ consecutive integers and there are no gaps between the blocks. 
For $j\ge1$, we set 
$$
y_j:=\sum_{i\in I_j}\chi_i\,.
$$
Let $M=M(N)$ denote the index of $y_j$ containing $\chi_N$. Observe that there 
exists a constant $C$ so that 
\begin{equation}
\label{eq.MN}
C^{-1}N^{3/5}\le M\le CN^{3/5}\,,\qquad\forall N\ge1\,.
\end{equation}
By \eqref{eq.chia1}, for a.e. $a\in[0,\epsilon]$, we find a constant $C(a)$ so that
\begin{equation}
\label{eq.lastterm}
\Big|\sum_{i=1}^N\xi_i(a)-\sum_{j=1}^My_j(a)\Big|\\
\le\sum_{i=1}^N|\xi_i(a)-\chi_i(a)|+C|I_M|\le C(a)+CN^{2/5}\,,
\end{equation}
for all $N\ge1$.
Hence, in order to prove Theorem~\ref{t.main} it is 
sufficient to consider the sum $\sum_{j=1}^My_j$. 

\subsection{Law of large numbers for $y_j^2$}
In this section we will prove the following key lemma. It is the main technical ingredient 
in the proof of Theorem~\ref{t.main}.

\begin{lemma}
\label{l.LLNy}
For a.e. $a\in[0,\epsilon]$, there exists a constant $C$ such that 
\begin{equation}
\label{eq.yj2}
\Big|N-\sum_{j=1}^My_j^2(a)\Big|\le CN^{2\gamma}\,,\qquad\forall N\ge1,
\end{equation}
(where $\gamma>2/5$ is the error exponent in Theorem~\ref{t.main}).
\end{lemma}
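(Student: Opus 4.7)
The plan is to reduce \eqref{eq.yj2} to an $L^2$-estimate on the partial sums $S_M := \sum_{j=1}^M (y_j^2 - |I_j|)$, and then apply a Gál–Koksma-type maximal inequality together with monotonicity of $\sum_{j=1}^M y_j^2$ in $M$ to pass from a bound along a geometric subsequence to a bound valid for every $N$. Since $N = \sum_{j=1}^M |I_j| + O(N^{2/5})$ and $2\gamma > 4/5$, the discrepancy between $N$ and $\sum_{j=1}^M |I_j|$ is harmlessly absorbed into the right-hand side of \eqref{eq.yj2}.

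The first step is to establish $E(y_j^2) = |I_j| + O(1)$. The $L^2$-approximation \eqref{eq.chiahelp} together with the uniform bound \eqref{eq.alphal1infty} gives $E(y_j^2) = E\bigl((\sum_{i\in I_j}\xi_i)^2\bigr) + O(1)$, and Proposition~\ref{p.mainestimate} then applies with $m = m_j := \min I_j \sim j^{5/3}$ and $n = |I_j| \sim j^{2/3}$, since $n \le \eta m/2$ for large $j$.

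The heart of the argument is the second-moment bound $E(S_M^2) \le CM^{7/3}$, which splits into diagonal and cross terms. For the diagonal, one needs the fourth-moment estimate $E(y_j^4) \le C|I_j|^2$; this follows by repeating the switching argument of Corollary~\ref{c.switch} and the good-partition decomposition of Lemma~\ref{l.goodpartition} at the $L^4$ level, so that the local fourth moment on a good element $\om \in \PP_{v_j}$ (with $v_j = m_j - [m_j^{1/4}]$) reduces to the fourth moment of a Birkhoff sum for the fixed map $T_{a_0}$, which is $O(n^2)$ by a standard four-point correlation estimate based on Proposition~\ref{p.udc}. For the cross terms with $j < k$, the plan is to condition on $\FF_{v_k}$ and invoke the local estimate \eqref{eq.mainestimatelocal} to get $E(y_k^2 \mid \FF_{v_k}) = |I_k| + O(1)$ on the bulk of $[0,\epsilon]$, so that $\mathrm{Cov}(y_j^2, y_k^2)$ becomes a product of $y_j^2$ and small-in-$L^1$ quantities governed by the $\phi$-mixing bound \eqref{eq.chia2}; after summation these contributions fit into $O(M^{7/3})$.

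The main obstacle is the cross-term analysis for blocks that are close together: because $\FF_i$ looks ahead by $i^\delta$ while consecutive blocks are separated by a gap of only $1$, the global mixing bound \eqref{eq.chia2} yields no decay when $k - j$ is small compared to $j^\delta$. The remedy is to replace global mixing by the local conditional variance estimate \eqref{eq.mainestimatelocal}: on partition elements $\om \in \PP_{v_k}$ of admissible size, $E(y_k^2 \mid \om) = |I_k| + O(1)$ holds \emph{deterministically}, with exceptional $\om$'s having total measure controlled by Lemma~\ref{l.goodpartition}. Once $E(S_M^2) = O(M^{7/3})$ is in hand, Gál–Koksma's inequality yields $|S_M| = O(M^{7/6} \log^c M)$ almost surely; since $M \sim N^{3/5}$, this is $O(N^{7/10} \log^c N) = O(N^{2\gamma})$ for any $\gamma > 7/20$, comfortably covering $\gamma > 2/5$.
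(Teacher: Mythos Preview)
Your outline follows the paper's own strategy almost exactly: pass from $y_j$ to $w_j=\sum_{i\in I_j}\xi_i$, control the fourth moments via the switching Corollary~\ref{c.switch}, treat the cross covariances $\operatorname{Cov}(w_j^2,w_k^2)$ by conditioning on $\PP_{v_k}$ and invoking the local variance estimate~\eqref{eq.mainestimatelocal}, and finish with Gal--Koksma. The structural plan is correct.

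There is, however, a quantitative overclaim in the cross-term step. Conditioning on $\FF_{v_k}$ and using \eqref{eq.mainestimatelocal} gives only $E(y_k^2\mid\FF_{v_k})=|I_k|+O(1)$ on good elements, so that
\[
\bigl|\operatorname{Cov}(y_j^2,y_k^2)\bigr|
= \bigl|E\bigl[y_j^2\bigl(E(y_k^2\mid\FF_{v_k})-Ey_k^2\bigr)\bigr]\bigr|
\le C\,Ey_j^2 \le C\,|I_j|\sim Cj^{2/3},
\]
with \emph{no} decay in $k-j$. The bound \eqref{eq.chia2} concerns first moments $E(\xi_{i+j}\mid\FF_i)$ and does not transfer to the $O(1)$ fluctuation $E(y_k^2\mid\FF_{v_k})-Ey_k^2$; the paper itself remarks (see the discussion preceding \eqref{eq.43}) that it does not know how to improve this estimate. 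Summing $\sum_{j<k\le M} j^{2/3}$ yields $E(S_M^2)=O(M^{8/3})$, not $O(M^{7/3})$, and hence $|S_M|=o(M^{4/3+\iota})=o(N^{4/5+\iota})$ rather than $O(N^{7/10})$. This still gives every $\gamma>2/5$, but only just, and your claimed margin down to $\gamma>7/20$ is not supported.

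A second point: to extract the almost sure bound from $E(S_M^2)$ you need the full incremental form of Gal--Koksma, i.e.\ $E\bigl(\sum_{j=m+1}^{m+n}(y_j^2-Ey_j^2)\bigr)^2\le C\bigl((m+n)^{8/3+\iota}-m^{8/3+\iota}\bigr)$ for all $m\ge0$, $n\ge1$, which is what the paper establishes in \eqref{eq.galkoksma}. The subsequence-plus-monotonicity argument you describe does not suffice: between consecutive points $N_k$ of a polynomial subsequence the deterministic drift $\sum_{j}|I_j|$ changes by an amount comparable to $N_{k+1}-N_k$, and for $\gamma$ barely above $2/5$ the two constraints (summability of $N_k^{8/5-4\gamma}$ along $N_k=k^p$ and $N_{k+1}-N_k=O(N_k^{2\gamma})$) become incompatible. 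Use the incremental bound and Theorem~\ref{t.galkoksma} directly, as the paper does.
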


Before we start with the proof of Lemma~\ref{l.LLNy}, we recall a version of the strong law 
of large numbers by Gal and Koksma. Its proof is, e.g., given in \cite[Theorem A.1]{ps}.

\begin{theorem}[Gal-Koksma's strong law of large numbers]
\label{t.galkoksma}
Let $z_j$, $j\ge1$, be zero-mean random variables and assume that there exist a real 
number $q\ge1$ and a constant $C$ 
such that 
$$
E\Big(\sum_{j=m+1}^{m+n}z_j\Big)^2\le C((m+n)^q-m^q)\,,\quad\forall\ m\ge0\ \text{and}\ n\ge1\,.
$$
Then for all $\iota>0$, we have $\frac1{n^{q/2+\iota}}\sum_{j=1}^nz_j\to0$ almost surely.
\end{theorem}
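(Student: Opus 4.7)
The plan is to prove the strong law via a dyadic subsequence argument augmented by a Rademacher--Menshov type maximal inequality. Write $S_n=\sum_{j=1}^n z_j$; the goal is $S_n/n^{q/2+\iota}\to 0$ almost surely for every $\iota>0$, and it will suffice to establish the stronger pointwise bound $|S_n|\le 2\, n^{q/2+\iota/4}$ for all $n$ large enough, which gives enough slack for the Borel--Cantelli estimates.

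Along the dyadic subsequence $n=2^k$, the hypothesis with $m=0$ gives $E(S_{2^k})^2\le C\cdot 2^{kq}$. Markov's inequality applied to $S_{2^k}^2$ yields
\[
\PPP\!\left(|S_{2^k}|>2^{k(q/2+\iota/4)}\right)\le \frac{C\cdot 2^{kq}}{2^{k(q+\iota/2)}}=\frac{C}{2^{k\iota/2}},
\]
which is summable in $k$, so Borel--Cantelli gives $|S_{2^k}|\le 2^{k(q/2+\iota/4)}$ eventually, almost surely.

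To bridge between consecutive dyadic times I would apply the classical Rademacher--Menshov decomposition. For each $i=0,1,\ldots,k$ let $\mathcal{D}_i$ be the partition of the integer interval $(2^k,2^{k+1}]$ into $2^i$ consecutive blocks of length $2^{k-i}$; for any $n$ in this range the interval $(2^k,n]$ is uniquely a disjoint union of intervals $J_0^{(n)},\ldots,J_k^{(n)}$ with $J_i^{(n)}\in\mathcal{D}_i$ or $J_i^{(n)}=\emptyset$, read off from the binary expansion of $n-2^k$. Writing $T(I)=\sum_{j\in I}z_j$ and applying Cauchy--Schwarz to the sum of at most $k+1$ terms,
\[
\max_{2^k\le n<2^{k+1}}(S_n-S_{2^k})^2\le (k+1)\sum_{i=0}^k\max_{I\in\mathcal{D}_i}T(I)^2\le (k+1)\sum_{i=0}^k\sum_{I\in\mathcal{D}_i}T(I)^2.
\]
Taking expectation and applying the hypothesis to each $T(I)$, the sum $\sum_{I\in\mathcal{D}_i}E\,T(I)^2$ telescopes to at most $C(2^{(k+1)q}-2^{kq})\le C'\cdot 2^{kq}$, uniformly in $i$. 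Hence $E\max_{2^k\le n<2^{k+1}}(S_n-S_{2^k})^2\le C''(k+1)^2\cdot 2^{kq}$, and a further Markov + Borel--Cantelli step yields $\max_{2^k\le n<2^{k+1}}|S_n-S_{2^k}|\le 2^{k(q/2+\iota/4)}$ eventually, almost surely. Combined with the dyadic bound this gives the desired pointwise estimate.

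The main step is the Rademacher--Menshov maximal inequality: the hypothesis only controls the $L^2$-norm of a single partial sum, not of a maximum, and the binary-decomposition-plus-Cauchy--Schwarz trick is precisely what converts $L^2$-control into a maximal estimate at the cost of an unavoidable $(k+1)^2$ (i.e. $\log^2 n$) factor. The rest of the argument (Chebyshev applied to an exponentially sparse subsequence, followed by Borel--Cantelli) is routine.
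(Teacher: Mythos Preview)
Your argument is correct: the dyadic subsequence together with the Rademacher--Menshov binary decomposition and the telescoping use of the hypothesis is exactly the classical route to Gal--Koksma's theorem. The paper does not supply its own proof of this statement---it simply records the result and refers to \cite[Theorem~A.1]{ps}---so there is nothing to compare against beyond noting that what you wrote is essentially the standard proof found in that reference.
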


\begin{proof}[Proof of Lemma~\ref{l.LLNy}]
In this proof we will mainly work with the original $\xi_i$ instead of their approximations $\chi_i$.
Let 
$$
w_j=\sum_{i\in I_j}\xi_i\,.
$$
Writing $y_j^2-w_j^2=(y_j+w_j)(y_j-w_j)$, by \eqref{eq.chia1}, we derive that 
$\sum_{j\ge1}|y_j^2-w_j^2|$ is almost surely finite.
Hence, it is sufficient to prove \eqref{eq.yj2} where $y_j$ is replaced by $w_j$.
Regarding the $w_j$'s we claim that it is sufficient to show that 
for all $\iota>0$ there is a constant $C$ such that 
\begin{equation}
\label{eq.galkoksma}
E\Big(\sum_{j=m+1}^{m+n}w_j^2-E w_j^2\Big)^2\le C((m+n)^{8/3+\iota}-m^{8/3+\iota})\,, 
\quad\forall m\ge0,\ n\ge1\,.
\end{equation}
Indeed, by the estimate \eqref{eq.mainestimate} in Proposition~\ref{p.mainestimate}, we 
have
$$
\Big|N-\sum_{j=1}^Mw_j^2\Big|\le CM+\Big|\sum_{j=1}^Mw_j^2-Ew_j^2\Big|\,.
$$
Hence, applying Theorem~\ref{t.galkoksma} to \eqref{eq.galkoksma}
and recalling \eqref{eq.MN}, 
concludes the proof of \eqref{eq.yj2} (where $y_j$ is replaced by $w_j$).

In the following we will prove \eqref{eq.galkoksma}. Observe that $(Ew_j^2)^2\le Ew_j^4$. 
We have
\begin{equation}
\label{eq.galkoksmapre}
E\Big(\sum_{j=m+1}^{m+n}w_j^2-E w_j^2\Big)^2
\le2\sum_{j=m+1}^{m+n}\Big(Ew_j^4+\sum_{k=j+1}^{m+n}|Ew_j^2w_k^2-Ew_j^2Ew_k^2|\Big)\,.
\end{equation}
We consider first $Ew_j^4$. For $\iota>0$ small, let 
$S=\{(v_1,v_2,v_3,v_4)\in I_j^4\mid 
v_1\le v_2\le v_3\le v_4\text{ and either $v_2-v_1\ge j^\iota$ or $v_4-v_3\ge j^\iota$}\}$.
We have
\begin{equation}
\label{eq.SSS}
\int w_j(a)^4da\le C\sum_{(v_1,v_2,v_3,v_4)\in S}\Big|
\int\prod_{\ell=1}^4\xi_{v_\ell}(a)da\Big|
+Cj^{4/3+2\iota}\,.
\end{equation}

Let $(v_1,...,v_4)\in S$. We consider first the case when $v_4-v_3\ge j^\iota$. 
In order to apply Lemma~\ref{l.switch} we have to get rid of partition elements with a 
too small image.
Let $E_{v_3}=\{\om\in\PP_{v_3}\mid|x_{v_3}(\om)|\ge\rho^{j^\iota/2}\}$. 
By \eqref{eq.MN} and Lemma~\ref{l.goodpartition}, the measure of $E_{v_3}$ is 
decaying stretched exponentially fast in $j$. 
For $\om\in\PP_{v_3}\setminus E_3$ and $a_0\in\om$, 
by equality~\eqref{eq.switchdist} (for $\ell_0=4$) combined with
 Lemma~\ref{l.switch} (for $\ell_0=1$), we derive
\begin{multline*}
\frac1{|\om|}\Big|\int_\om\prod_{\ell=1}^4\xi_\ell(a)da\Big|
\le\frac C{|x_{v_3}(\om)|}\Big| \int_{x_{v_3}(\om)}
\Big(\prod_{\ell=1}^3\xi_\ell(x_{v_3}|_\om^{-1}(y))\Big)
\varphi_{a_0}(T_{a_0}^{v_4-v_3}(y))dy\Big|\\
+C\lambda_0^{-v_3}\,.
\end{multline*}
Let $L_\ell=x_{v_\ell}\circ x_{v_3}|_\om^{-1}$, $1\le \ell\le3$. 
For $y\in x_{v_3}(\om)$, we have $\xi_{v_\ell}(x_{v_3}|_\om^{-1}(y))=\varphi_a(L_\ell(y))$ 
where $a=x_{v_3}|_\om^{-1}(y)$.
Hence, by Lemma~\ref{l.varregularity} and \eqref{eq.densitydiff}, we derive that 
$|\xi_{v_\ell}(x_{v_3}|_\om^{-1}(y))-\varphi_{a_0}(L_\ell(y))|
\le C|\om|^\kappa$.
It follows
\begin{multline}
\label{eq.xxxx}
\Big|\int\prod_{\ell=1}^4\xi_{v_\ell}(a)da\Big|
\le C|E_{v_3}|+\!\!\!\!\! \sum_{\om\in\PP_{v_3}\setminus E_{v_3}}\!\!\!|\om|\bigg[
\frac C{|x_{v_3}(\om)|}
\Big|\int_{x_{v_3}(\om)}\Big(\prod_{\ell=1}^3\varphi_{a_0}(L_\ell(y))\Big)\\
\varphi_{a_0}(T_{a_0}^{v_4-v_3}(y))dy\Big|+\frac {C|\om|^\kappa}{|x_{v_3}(\om)|}
+C\lambda_0^{-v_3}\bigg]\,.
\end{multline}
Observe that, by \eqref{eq.distortion3}, we have 
$|L_\ell'(y)|\le C\lambda^{-(v_3-v_1)}$, which implies
$|\chi_{x_{v_3}(\om)}\varphi_{a_0}\circ L_\ell|_\alpha\le C|\chi_{x_{v_\ell}(\om)}
\varphi_{a_0}|_\alpha$. 
Since $\|\chi_{x_{v_3}(\om)}\varphi_{a_0}\circ L_\ell\|_\infty
=\|\chi_{x_{v_\ell}(\om)}\varphi_{a_0}\|_\infty$, 
by \eqref{eq.inftyalpha}, 
$$
\|\chi_{x_{v_3}(\om)}\varphi_{a_0}\circ L_\ell\|_\alpha
\le C\|\chi_{x_{v_\ell}(\om)}\varphi_{a_0}\|_\alpha
\le C^2\|\varphi_{a_0}\|_\alpha\,,
$$
where in the last inequality we used \eqref{eq.algebra}.
Hence, by Proposition~\ref{p.udc} and \eqref{eq.alphal1infty}, 
the absolute value of the integral on the right hand side 
in \eqref{eq.xxxx}
is bounded from above by a (uniform) constant times $\rho^{v_4-v_3}\le\rho^{j^\iota}$. 
By the definition of $|E_{v_3}|$, we get that $|x_{v_3}(\om)|\rho^{j^\iota}\le\rho^{j^\iota/2}$, 
for all $\om\in\PP_{v_3}\setminus E_{v_3}$.
The second and last term in the sum on the right hand 
side of \eqref{eq.xxxx} decays exponentially fast in $v_3$. 
Altogether, we conclude that $|\int\prod_{\ell=1}^4\xi_{v_\ell}da|$ is decaying 
stretched exponentially fast in $j$ whenever $(v_1,...,v_4)\in S$ and $v_4-v_3\ge j^\iota$.

The case when $v_2-v_1\ge j^\iota$ is easier. Instead of considering the 
functions $L_\ell$, we can apply directly Lemma~\ref{l.switch} with $\ell_0=4$.  
Then, a similar 
reasoning gives also 
the stretched exponential decay of $|\int\prod_{\ell=1}^4\xi_{v_\ell}da|$ in this case. 
Altogether, recalling \eqref{eq.SSS} 
and observing that $|S|$ is growing only polynomially fast in $j$, for each $\iota>0$ 
we find a constant $C$ so that
\begin{equation}
\label{eq.xi4total}
Ew_j^4\le C j^{4/3+2\iota}\,,\qquad\forall j\ge1\,.
\end{equation}

Regarding the term $E w_j^2w_k^2$, we can assume that $k\ge j+2$ since for $k=j+1$ we just can 
apply Cauchy's inequality and \eqref{eq.xi4total} 
for estimating $Ew_j^2w_{j+1}^2$ and Proposition~\ref{p.mainestimate} for 
estimating $Ew_{j}^2Ew_{j+1}^2$ which yields the upper bound $j^{4/3+2\iota}$ for 
$|Ew_j^2w_{j+1}^2-Ew_j^2Ew_{j+1}^2|$.
(For the other terms we have to give a better bound otherwise the bound we get when 
summing over $k$ is not good enough.)
Henceforth, let $k\ge j+2$. We first give a good upper bound for 
$|Ey_j^2w_k^2-Ey_j^2Ew_k^2|$.
This is convenient, since $y_j$ is constant on elements of the partition $\PP_{m-m^{1/4}}$, 
where $m$ denotes the smallest integer in $I_k$.
Let $v=m-m^{1/4}$. 
Since $d_v:=\lambda_0^{-m^{1/4}}$ is decaying stretched exponentially fast in $v$, we can 
apply Lemma~\ref{l.goodpartition} and we find an exceptional set 
$E_v\subset\PP_v$ so that $|E_v|\le Cd_v^{1/2}$ and 
$|x_v(\om)|\ge d_v$ for all $\om\in\PP_v\setminus E_v$.
Let $\PP_v^*$ be a refinement of the partition $\PP_v\setminus E_v$ so that 
for $\om\in\PP_v^*$ we have $d_v=\lambda_0^{-m^{1/4}}\le|x_v(\om)|\le |I_k|^{-3/\alpha}$. 
Applying the local 
estimate~\eqref{eq.mainestimatelocal} in Proposition~\ref{p.mainestimate}, we obtain
$$
\Big|E\Big(w_k(a)^2\mid\{a\in\om\}\Big)-|I_k|\Big|\le C\,,\qquad\forall\om\in\PP_v^*\,.
$$
Recall that $y_j$ is constant on elements of $\PP_v^*$. 
We get
\begin{align*}
Ey_j^2w_k^2&=\sum_{\om\in\PP_v^*}y_j^2(\om)|\om|
E\Big(w_k(a)^2\mid\{a\in\om\}\Big)/\epsilon
+O(\lambda_0^{-m^{1/4}/2}\|y_j^2w_k^2\|_\infty )\\
&\le Ey_j^2(|I_k|+C)+O(\lambda_0^{-m^{1/4}/2}k^{8/3} )\,.
\end{align*}
On the other hand, by the global estimate \eqref{eq.mainestimate} in 
Proposition~\ref{p.mainestimate}, we have $Ey_j^2Ew_k^2\ge Ey_j^2(|I_k|-C)$.
Altogether, we derive
\begin{equation}
\label{eq.55}
|Ey_j^2w_k^2-Ey_j^2Ew_k^2|\le CEy_j^2\,.
\end{equation}
Writing $E|w_j^2-y_j^2|=E|w_j+y_j||w_j-y_j|\le Cj^{2/3}E|w_j-y_j|$, by 
\eqref{eq.chia1} and \eqref{eq.chiahelp}, we derive that $E|w_j^2-y_j^2|$ 
is stretched exponentially decreasing in $j$. 
Hence, by \eqref{eq.55} and once more by \eqref{eq.mainestimate}, it follows
\begin{equation}
\label{eq.43}
|Ew_j^2w_k^2-Ew_j^2Ew_k^2|\le C|I_j|\le Cj^{2/3}\,.
\end{equation}
Recalling \eqref{eq.galkoksmapre}, we can now easily derive \eqref{eq.galkoksma}. This 
concludes the proof of Lemma~\ref{l.LLNy}.
\end{proof}

\subsection{Martingale representation and embedding procedure}

In this section we will follow closely Sections~3.4 and 3.5 in \cite{ps}. 
Let $\LL_j$, $j\ge1$, be the $\sigma$-field generated by $(y_1,y_2,...,y_j)$, and set 
$$
u_j=\sum_{k\ge0} E(y_{j+k}\mid\LL_{j-1})\,.
$$
Then $\{Y_j,\LL_j\}$ defined by $Y_j=y_j+u_{j+1}-u_j$ is a martingale difference sequence. 
Recalling the definition of the $\sigma$-fields $\FF_i$ in the beginning 
of Section~\ref{ss.blocks}, we see that $\LL_{j-1}\subset\FF_{i(j)}$ 
where $i(j)=\max\{i\in I_{j-1}\}$. Hence, we can write
$$
u_j=\sum_{k\ge1}E(E(\xi_{i(j)+k}\mid\FF_{i(j)})\mid\LL_{j-1})\,.
$$
Recall \eqref{eq.chia2} in Lemma~\ref{l.chiapprox} and the to it related notations. 
Recall also that $i(j)\le Cj^{5/3}$ (see, e.g., \eqref{eq.MN}).
Setting 
$E_j=\cup_{k\ge0} E_{i(j),k}$,
we have $|E_j|\le C\rho_0^{i(j)^{\delta}/2}\le C^2e^{-j^{5\delta/3}/C}$,
and there exists a constant $C$ so that 
for a.e. $a\in[0,\epsilon]\setminus E_j$ we have
\begin{equation}
\label{eq.uj1}
|u_j(a)|\le C j^{5\delta/3}\,.
\end{equation}
Further, for $\ell\ge0$, we derive
\begin{equation}
\label{eq.uj2}
|u_j(a)|\le\max(Cj^{5\delta/3},C\ell)\,,
\end{equation}
for a.e. $a\in E_{i(j),\ell}\setminus\cup_{k>\ell}E_{i(j),k}$.
Since 
\begin{equation}
\label{eq.Eij}
|E_{i(j),\ell}|\le\left\{\begin{array}{ll}C\rho_0^{(\ell-i(j)^{\delta})/2}\le C\rho_0^{\ell/4}\,,&
\text{if }\ell\ge2i(j)^{\delta}\ge j^{5\delta/3}/C\\
0\,,&\text{otherwise,}\end{array}\right.
\end{equation}
for $\delta>0$ sufficiently small,
we see that $|\{|u_{M+1}|\ge N^{\gamma}\}|$ is 
summable over $N\ge1$ (recall \eqref{eq.MN}). We conclude 
that, for a.e. $a\in[0,\epsilon]$, there exists a constant $C$ so that
\begin{equation}
\label{eq.yjYj}
\Big|\sum_{j=1}^My_j(a)-Y_j(a)\Big|=|u_{M+1}-u_1|\le CN^\gamma\,.
\end{equation}
In other words, in the following we can work with the martingale 
difference sequence $Y_j$ instead of $y_j$.
The $Y_j$ inherit the law of large numbers shown for $y_j$:

\begin{lemma}
\label{l.LLNY}
For a.e. $a\in[0,\epsilon]$, there exists a constant $C$ so that 
\begin{equation}
\label{eq.Yj2}
\Big|N-\sum_{j=1}^MY_j^2(a)\Big|\le CN^{2\gamma}\,,\qquad\forall N\ge1,
\end{equation}
(where $\gamma>2/5$ is the error exponent in Theorem~\ref{t.main}).
\end{lemma}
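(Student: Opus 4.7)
The plan is to deduce \eqref{eq.Yj2} from the corresponding estimate \eqref{eq.yj2} for $y_j^2$ in Lemma~\ref{l.LLNy} by expanding
$$
Y_j^2 = y_j^2 + 2y_j(u_{j+1}-u_j) + (u_{j+1}-u_j)^2
$$
and showing that both the cross sum $\sum_{j=1}^M y_j(u_{j+1}-u_j)$ and the quadratic sum $\sum_{j=1}^M (u_{j+1}-u_j)^2$ are $O(N^{2\gamma})$ almost surely. The inputs we have on $u_j$ are the pointwise bounds \eqref{eq.uj1}, \eqref{eq.uj2} together with the measure estimate \eqref{eq.Eij} on the exceptional sets $E_{i(j),\ell}$.

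The first step is a Borel--Cantelli argument to promote \eqref{eq.uj1}--\eqref{eq.Eij} to a deterministic polynomial bound on $|u_j(a)|$. Fix a small $\beta>5\delta/3$. For each $j$, the set $B_j:=\{|u_j|\ge j^\beta\}$ is contained in $\bigcup_{\ell\ge j^\beta/C}E_{i(j),\ell}$, and by \eqref{eq.Eij} its measure is at most $C\rho_0^{j^\beta/(5C)}$, which is summable. Hence for a.e.\ $a\in[0,\epsilon]$ there exists $j_0(a)$ with $|u_j(a)|\le j^\beta$ for all $j\ge j_0(a)$. Consequently $(u_{j+1}-u_j)^2\le 4j^{2\beta}$ for $j\ge j_0(a)$, so
$$
\sum_{j=1}^M (u_{j+1}(a)-u_j(a))^2\le C(a)+4\sum_{j=j_0}^M j^{2\beta}\le C(a)M^{1+2\beta}\le C(a)N^{3/5+6\beta/5},
$$
using $M\le CN^{3/5}$ from \eqref{eq.MN}.

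The cross term is handled by Cauchy--Schwarz: since by \eqref{eq.yj2} we have $\sum_{j=1}^M y_j^2(a)\le 2N$ eventually,
$$
\Bigl|\sum_{j=1}^M y_j(a)(u_{j+1}(a)-u_j(a))\Bigr|\le \Bigl(\sum_{j=1}^M y_j^2\Bigr)^{1/2}\!\Bigl(\sum_{j=1}^M (u_{j+1}-u_j)^2\Bigr)^{1/2}\le C(a)N^{4/5+3\beta/5}.
$$
Thus
$$
\Bigl|\sum_{j=1}^M Y_j^2-\sum_{j=1}^M y_j^2\Bigr|\le C(a)\bigl(N^{3/5+6\beta/5}+N^{4/5+3\beta/5}\bigr),
$$
and combined with \eqref{eq.yj2} we get $|N-\sum_{j=1}^M Y_j^2(a)|\le C(a)N^{\max(2\gamma,\,4/5+3\beta/5)}$.

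The final step is a matter of calibrating constants: given any prescribed error exponent $\gamma>2/5$, we choose $\delta>0$ at the beginning of Section~\ref{ss.blocks} small enough that one can take $\beta$ with $5\delta/3<\beta<5(2\gamma-4/5)/3$ (this is possible precisely because $2\gamma-4/5>0$). With this choice $4/5+3\beta/5<2\gamma$, and \eqref{eq.Yj2} follows. The only place where any real work is hidden is the Borel--Cantelli step, but this is immediate from the stretched-exponential measure estimate \eqref{eq.Eij} already established, so no new obstacle arises. In short, Lemma~\ref{l.LLNY} is essentially a quantitative perturbation of Lemma~\ref{l.LLNy} by the martingale correction $u_{j+1}-u_j$, whose smallness is already encoded in the three displays preceding its statement.
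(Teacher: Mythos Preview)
Your proof is correct and follows essentially the same approach as the paper: expand $Y_j^2=y_j^2-2y_jv_j+v_j^2$ with $v_j=u_j-u_{j+1}$, use Cauchy--Schwarz on the cross term together with $\sum y_j^2\le 2N$ from Lemma~\ref{l.LLNy}, and control $\sum_{j\le M}v_j^2$ via Borel--Cantelli from \eqref{eq.uj1}, \eqref{eq.uj2}, \eqref{eq.Eij}. The only cosmetic difference is that the paper estimates directly the measure of $\{\sum_{j\le M}v_j^2>N^{4\gamma-1}\}$ and shows it is summable in $N$, whereas you first upgrade \eqref{eq.uj1}--\eqref{eq.Eij} to a pointwise polynomial bound $|u_j|\le j^\beta$ and then sum; both arguments use the same inputs and yield the same conclusion after choosing $\delta$ small.
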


\begin{proof}
Put $v_j=u_j-u_{j+1}$. Since $Y_j^2=y_j^2-2y_jv_j+v_j^2$, by Lemma~\ref{l.LLNy} and 
Cauchy's inequality, it is sufficient to show that, 
for a.e. $a\in[0,\epsilon]$, there exists a constant $C$ so that 
\begin{equation}
\label{eq.llnY}
\sum_{j=1}^Mv_j^2\le CN^{4\gamma-1}\,.
\end{equation}
Observe that by \eqref{eq.MN} we have $N^{4\gamma-1}/M\ge CN^{4\gamma-8/5}$, 
and since $\gamma>2/5$ we have $4\gamma-8/5>0$.
By \eqref{eq.uj1} and \eqref{eq.uj2}, there exists a constant $C$ so that 
for all $\delta>0$ sufficiently small and all $M$ sufficiently large we have, 
for $1\le j\le M$,
\begin{equation}
\label{eq.vj1}
v_j(a)^2\le C j^{10\delta/3}\le N^{4\gamma-1}/M\,,
\qquad\text{for a.e. $a\in[0,\epsilon]\setminus (E_j\cup E_{j+1})$},
\end{equation}
and, for all $\ell\ge0$ and $1\le j\le M$, we have
\begin{equation}
\label{eq.vj2}
v_j(a)^2\le\max(Cj^{10\delta/3},C\ell^2)\le\max(N^{4\gamma-1}/M,C\ell^2)\,,
\end{equation}
for a.e. $a\in (E_{i(j),\ell}\cup E_{i(j+1),\ell})\setminus(\cup_{k>\ell}E_{i(j),k}\cup E_{i(j+1),k})$.
Combined with \eqref{eq.Eij}, we get
\begin{align*}
|\{a\in[0,\epsilon]\mid\sum_{j=1}^Mv_j^2\le N^{4\gamma-1}\}|
\le N^{-(4\gamma-1)} \sum_{j=1}^M
\sum_{\ell\ge\sqrt{N^{4\gamma-1}/CM}}
C\ell^2C\rho_0^{\ell/4}\,,
\end{align*}
where the right hand side is summable in $N$. 
This concludes the proof of \eqref{eq.llnY} and, thus, the proof of the lemma.
\end{proof}

\begin{lemma}
\label{l.LLNR}
For a.e. $a\in[0,\epsilon]$, there exists a constant $C$ so that 
\begin{equation}
\label{eq.Rj2}
\Big|\sum_{j=1}^ME(Y_j^2\mid \LL_{j-1})-Y_j^2(a)\Big|\le CN^{2\gamma}\,,\qquad\forall N\ge1,
\end{equation}
(where $\gamma>2/5$ is the error exponent in Theorem~\ref{t.main}).
\end{lemma}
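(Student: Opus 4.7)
The idea is to show that the random variables
$R_j := Y_j^2 - E(Y_j^2 \mid \LL_{j-1})$
form a martingale difference sequence with respect to $\{\LL_j\}$, so that Gal--Koksma's strong law (Theorem~\ref{t.galkoksma}) can be applied after a sufficiently good bound on $E R_j^2$. The statement to prove is then $|\sum_{j=1}^M R_j(a)| \le CN^{2\gamma}$ almost surely, for some random constant $C=C(a)$.

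The main step is the fourth moment bound $E Y_j^4 \le C j^{4/3 + \iota}$ for every $\iota > 0$. Since $Y_j = y_j + v_j$ with $v_j = u_{j+1} - u_j$, it suffices to bound $E y_j^4$ and $E v_j^4$ separately. For $E y_j^4$, I would compare $y_j$ to $w_j = \sum_{i \in I_j} \xi_i$ using $|y_j - w_j| \le \sum_{i\in I_j} |\chi_i - \xi_i|$; by \eqref{eq.chia1} and \eqref{eq.chiahelp} in Lemma~\ref{l.chiapprox} this difference has a stretched exponentially small $L^4$-norm, so $E y_j^4 = E w_j^4 + o(1)$, and \eqref{eq.xi4total} gives the desired bound. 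For $E v_j^4$, the bounds \eqref{eq.uj1}, \eqref{eq.uj2}, combined with the measure estimate \eqref{eq.Eij} on $E_{i(j),\ell}$, yield
\begin{equation*}
E v_j^4 \le C j^{20\delta/3} + C \sum_{\ell \ge 2i(j)^\delta} \ell^4 \rho_0^{\ell/4} \le C,
\end{equation*}
for $\delta > 0$ small, which is more than sufficient. Jensen's inequality then gives $E R_j^2 \le 4 E Y_j^4 \le C j^{4/3 + \iota}$.

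Since $\{R_j, \LL_j\}$ is a martingale difference sequence, orthogonality yields
\begin{equation*}
E \bigg( \sum_{j=m+1}^{m+n} R_j \bigg)^2 = \sum_{j=m+1}^{m+n} E R_j^2 \le C\bigl( (m+n)^{7/3+\iota} - m^{7/3+\iota}\bigr).
\end{equation*}
Applying Theorem~\ref{t.galkoksma} with $q = 7/3 + \iota$, we obtain that for any $\iota' > 0$, almost surely
\begin{equation*}
\bigg| \sum_{j=1}^M R_j(a) \bigg| \le C(a)\, M^{7/6 + \iota'}.
\end{equation*}
Using \eqref{eq.MN}, $M \le C N^{3/5}$, so the right-hand side is bounded by $C(a) N^{7/10 + \iota''}$. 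Since $\gamma > 2/5$ means $2\gamma > 4/5 > 7/10 + \iota''$ for $\iota''$ small, this yields $|\sum_{j=1}^M R_j(a)| \le C(a) N^{2\gamma}$ for all $N \ge 1$, which is exactly \eqref{eq.Rj2}.

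The main obstacle is the fourth moment bound $E Y_j^4 \le C j^{4/3+\iota}$: the estimate of $E w_j^4$ in \eqref{eq.xi4total} already required the delicate switching argument of Section~\ref{s.switch}, and one needs to check carefully that the auxiliary terms coming from $v_j = u_{j+1}-u_j$ and from the approximation $y_j \approx w_j$ do not destroy this polynomial bound. Once this is in place, everything else is a routine martingale orthogonality argument combined with Gal--Koksma, and the arithmetic $(3/5) \cdot (7/6) = 7/10 < 4/5$ is precisely what makes the exponent $\gamma > 2/5$ work.
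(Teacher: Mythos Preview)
Your proposal is correct and follows essentially the same approach as the paper: define $R_j=Y_j^2-E(Y_j^2\mid\LL_{j-1})$, bound $ER_j^2\le4EY_j^4\le Cj^{4/3+\iota}$ via the decomposition $Y_j=y_j+(u_{j+1}-u_j)$ together with \eqref{eq.xi4total}, \eqref{eq.chiahelp}, and \eqref{eq.uj1}--\eqref{eq.Eij}, and conclude $\sum_{j\le M}R_j=O(M^{7/6+\iota})=O(N^{7/10+\iota})$. The only cosmetic difference is that you invoke Gal--Koksma (Theorem~\ref{t.galkoksma}) via martingale orthogonality, whereas the paper uses a martingale convergence result of Chow combined with Kronecker's lemma; both routes yield the identical exponent $7/6$ and the same final arithmetic $7/10<4/5<2\gamma$.
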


\begin{proof}
Set $R_j=Y_j^2-E(Y_j^2\mid\LL_{j-1})$ and observe that $\{R_j,\LL_j\}$ is a 
martingale difference sequence. By the definition of $Y_j$ and by Minkowski's inequality, 
we have
$$
ER_j^2\le4EY_j^4\le C(Ew_j^4+E|w_j^4-y_j^4|+Ev_j^4)\,.
$$
By \eqref{eq.xi4total}, for all $\iota>0$ we find a constant $C$ so that 
$Ew_j^4\le Cj^{4/3+\iota}$. Since $w_j^4-y_j^4=(w_j^2+y_j^2)(w_j+y_j)(w_j-y_j)$, 
we can apply \eqref{eq.chiahelp} and we derive that $E|w_j^4-y_j^4|$ is uniformly bounded in $j$.
By \eqref{eq.uj1} and \eqref{eq.uj2}, we derive that $Ev_j^4\le Cj^{4\delta}$. 
Hence, for all $\iota>0$, we have 
$$
\sum_{j\ge1}j^{-7/3-\iota}ER_j^2<\infty\,,
$$
and by a martingale result (see, e.g., \cite{chow}) we get that 
$\sum_{j\ge1}j^{-7/6-\iota}R_j$ converges almost surely. By Kronecker's Lemma we conclude 
that, for a.e. $a\in[0,\epsilon]$, there exists a constant $C$ so that
$$
\sum_{j=1}^MR_j\le CM^{7/6+\iota}\le C^2N^{21/30+\iota}\,,
$$
where we used \eqref{eq.MN} in the last inequality. Since $21/30<4/5<2\gamma$ this 
concludes the proof of the lemma.
\end{proof}

Now we apply the following martingale embedding result to the martingale difference 
sequence $Y_j$. For a proof see, e.g., \cite[Theorem~A.1]{hh}.

\begin{theorem}[Skorokhod's representation theorem] 
\label{t.skorokhod}
Let $\{ \sum_{j=1}^MY_j,\ \LL_M,\ M\ge1\}$ 
be a zero-mean, square-integrable martingale. 
Then there exists a probability space supporting a 
zero-mean, square-integrable martingale $\{ \sum_{j=1}^M\widetilde Y_j,\ \widetilde\LL_M,\ M\ge1\}$,
a Brownian motion $W$, and a sequence of nonnegative variables  $T_j$, $j\ge1$, 
such that 
\begin{itemize}
\item[(i)]
$\{ Y_j \}_{j\ge1}$ and $\{ \widetilde Y_j \}_{j\ge1}$ have the same distribution;
\item[(ii)]
$\sum_{j=1}^M \widetilde Y_j = W(\sum_{j=1}^M T_j)$ almost surely;
\item[(iii)]
$E(T_j\mid\GG_{j-1})=E(\widetilde Y_j^2\mid\GG_{j-1})$ almost surely, where 
$\GG_j$ is the $\sigma$-field generated by $\{W(t),\ 0\le t\le\sum_{\ell\le j}T_\ell\}$.
\end{itemize}
\end{theorem}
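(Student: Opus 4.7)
The plan is to reduce the martingale embedding to the classical single-step Skorokhod embedding, iterated using the strong Markov property of Brownian motion. Recall the single-step result: for any zero-mean, square-integrable real random variable $X$, one can realise a stopping time $\tau$ on a Brownian motion $W$ (after enlarging the probability space with an auxiliary uniform randomiser if $X$ is not already a mixture of two-point laws) such that $W(\tau)$ has the same distribution as $X$ and $E[\tau]=E[X^2]$. The standard construction takes $\tau=\inf\{t:W(t)\notin(a,b)\}$ for the two-point law supported on $\{a,b\}$ with $a<0<b$; a general law is obtained as a mixture, with the mixture index chosen via the auxiliary randomiser independent of $W$.

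First I would enlarge the underlying probability space to carry a Brownian motion $W$ independent of $\{Y_j\}$, together with an i.i.d.\ sequence of auxiliary uniform variables $U_j$, $j\ge1$, also independent of $W$ and of $\{Y_j\}$. I would also pick, once and for all, a measurable version of the Skorokhod map: a kernel $(\mu,u)\mapsto \tau(\mu,u)$ that, given a zero-mean square-integrable law $\mu$ on $\real$ and a uniform $u\in[0,1]$, outputs a stopping time for Brownian motion whose exit value realises $\mu$ and whose expectation equals the variance of $\mu$. This requires a measurable selection; the Chacon–Walsh or Dubins explicit embedding (or the Az\'ema–Yor construction) provides such a measurable dependence on the target law.

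Next I would construct $T_j$ and $\widetilde Y_j$ inductively. Set $S_0=0$; assuming $T_1,\dots,T_{j-1}$ and $\widetilde Y_1,\dots,\widetilde Y_{j-1}$ have been built with $S_{j-1}=\sum_{\ell<j}T_\ell$, use the strong Markov property to view $B_t:=W(S_{j-1}+t)-W(S_{j-1})$ as a Brownian motion independent of $\GG_{j-1}\vee\sigma(U_j)$. Choose a regular conditional distribution $\mu_{j-1}(\cdot)$ of $Y_j$ given $\LL_{j-1}$ (which is a.s.\ zero-mean and square-integrable since $\{Y_j,\LL_j\}$ is a martingale difference sequence), and set $T_j:=\tau(\mu_{j-1},U_j)$ applied to $B$, $\widetilde Y_j:=B(T_j)=W(S_j)-W(S_{j-1})$. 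By construction the conditional law of $\widetilde Y_j$ given $\LL_{j-1}$ (with $\LL_{j-1}$ realised via $\widetilde Y_1,\dots,\widetilde Y_{j-1}$) matches that of $Y_j$ given $\LL_{j-1}$, and $E(T_j\mid\GG_{j-1})=E(\widetilde Y_j^2\mid\GG_{j-1})$ by the defining property of the single-step embedding.

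Finally I would verify the three conclusions. Property (i) follows by induction on $M$: at each stage the conditional distribution of $\widetilde Y_j$ given the past coincides with that of $Y_j$, so the joint laws agree. Property (ii) is a telescoping identity: $\sum_{j=1}^M\widetilde Y_j=\sum_{j=1}^M(W(S_j)-W(S_{j-1}))=W(S_M)=W(\sum_{j=1}^M T_j)$. Property (iii) is inherited directly from the conditional variance identity in the single-step embedding. The main obstacle is the measurable choice of the embedding kernel on the parameter $\mu$; once that is secured by the Chacon–Walsh construction, the rest of the argument is an iterative application of the strong Markov property and is classical — which is why the paper is content to cite \cite[Theorem~A.1]{hh}.
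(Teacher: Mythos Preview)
The paper does not prove this theorem at all; it simply states it and refers the reader to \cite[Theorem~A.1]{hh} for a proof. Your sketch is precisely the classical iterative Skorokhod embedding argument that underlies that reference (single-step embedding applied to the regular conditional law of $Y_j$ given the past, glued together via the strong Markov property), and it is correct. Since the paper offers no argument of its own there is nothing to compare; your proposal is a faithful outline of the standard proof behind the citation.
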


We will keep the same notation, i.e., instead of writing $\widetilde Y_j$ and $\widetilde\LL_j$, 
we keep writing $Y_j$ and $\LL_j$. Since $\LL_j\subset\GG_j$, for all $j\ge1$, we have 
\begin{equation}
\label{eq.Tjhelp}
E(T_j\mid\GG_{j-1})=E(Y_j^2\mid\GG_{j-1})=E(Y_j^2\mid\LL_{j-1})\,,
\end{equation}
almost surely. We can now show a strong law of large numbers for the sequence $T_j$.

\begin{lemma}
\label{l.LLNT}
For a.e. $a\in[0,\epsilon]$, there exists a constant $C$ so that 
\begin{equation}
\label{eq.Tj}
\Big|N-\sum_{j=1}^MT_j\Big|\le CN^{2\gamma}\,,\qquad\forall N\ge1,
\end{equation}
(where $\gamma>2/5$ is the error exponent in Theorem~\ref{t.main}).
\end{lemma}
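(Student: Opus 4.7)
The plan is to combine Lemmas~\ref{l.LLNY} and \ref{l.LLNR} to control the sum of conditional expectations $\sum_{j=1}^M E(T_j\mid\GG_{j-1})$, and then to bound the martingale remainder $\sum_{j=1}^M (T_j - E(T_j\mid\GG_{j-1}))$ by an argument completely parallel to the one used in the proof of Lemma~\ref{l.LLNR}.

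First I would combine the previous two lemmas with the identity \eqref{eq.Tjhelp}. Indeed, Lemma~\ref{l.LLNY} and Lemma~\ref{l.LLNR} together yield, for a.e. $a\in[0,\epsilon]$,
$$
\Big|N-\sum_{j=1}^M E(Y_j^2\mid\LL_{j-1})\Big|\le CN^{2\gamma}\,,\qquad\forall N\ge1\,,
$$
and by \eqref{eq.Tjhelp} this is the same as $|N-\sum_{j=1}^M E(T_j\mid\GG_{j-1})|\le CN^{2\gamma}$. Hence it only remains to show
$$
\Big|\sum_{j=1}^M\bigl(T_j-E(T_j\mid\GG_{j-1})\bigr)\Big|\le CN^{2\gamma}\quad\text{a.s.}
$$

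Setting $D_j=T_j-E(T_j\mid\GG_{j-1})$, the sequence $\{D_j,\GG_j\}$ is a martingale difference sequence. To control its partial sums I would proceed exactly as in the proof of Lemma~\ref{l.LLNR}: estimate $ED_j^2\le ET_j^2$, invoke the standard second-moment estimate from Skorokhod's embedding (see, e.g., \cite[Lemma~A.1]{hh}) which gives a universal constant $C$ with $ET_j^2\le CEY_j^4$, and then reuse the bound $EY_j^4\le Cj^{4/3+\iota}$ obtained in the proof of Lemma~\ref{l.LLNR} (via \eqref{eq.xi4total}, \eqref{eq.chiahelp}, and the estimates \eqref{eq.uj1}--\eqref{eq.uj2} on $v_j$). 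Consequently, for every $\iota>0$,
$$
\sum_{j\ge1}j^{-7/3-\iota}ED_j^2<\infty\,,
$$
so by the martingale convergence criterion in \cite{chow} the series $\sum_{j\ge1}j^{-7/6-\iota}D_j$ converges almost surely, and Kronecker's lemma gives $\sum_{j=1}^M D_j=O(M^{7/6+\iota})$ a.s.

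Finally, using \eqref{eq.MN} we obtain $\sum_{j=1}^M D_j=O(N^{21/30+\iota})$, and since $21/30<4/5\le 2\gamma$ we can choose $\iota>0$ so small that $21/30+\iota<2\gamma$, which yields \eqref{eq.Tj}. The main point requiring care is the moment bound $ET_j^2\le CEY_j^4$ from Skorokhod's representation; everything else is a direct transcription of the argument already carried out for $R_j$ in Lemma~\ref{l.LLNR}.
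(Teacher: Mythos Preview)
Your proof is correct and follows essentially the same route as the paper's: the paper also decomposes $N-\sum_{j=1}^M T_j$ into the piece controlled by Lemmas~\ref{l.LLNY} and~\ref{l.LLNR} plus the martingale remainder $\sum_{j=1}^M\bigl(E(T_j\mid\GG_{j-1})-T_j\bigr)$, and then repeats the argument of Lemma~\ref{l.LLNR} verbatim using $ER_j^2\le CEY_j^4$. The only cosmetic difference is that you spell out the source of the bound $ET_j^2\le CEY_j^4$ (the standard moment estimate in Skorokhod's embedding, which is not listed in Theorem~\ref{t.skorokhod} but is part of the construction in \cite{hh} and \cite{ps}), whereas the paper simply asserts $ER_j^2\le 4EY_j^4$ without citation.
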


\begin{proof}
By \eqref{eq.Tjhelp} we get
$$
N-\sum_{j=1}^MT_j=\Big[N-\sum_{j=1}^MY_j^2\Big]
+\sum_{j=1}^M[Y_j^2-E(Y_j^2\mid\LL_{j-1})]
+\sum_{j=1}^M[E(T_j\mid\GG_{j-1})-T_j]\,,
$$
almost surely. By Lemma~\ref{l.LLNY} and Lemma~\ref{l.LLNR}, the first two terms 
are almost surely bounded by a constant times $N^{2\gamma}$. 
Write $R_j=E(T_j\mid\GG_{j-1})-T_j$. By \eqref{eq.Tjhelp}, $\{R_j,\GG_j\}$ is a martingale 
difference sequence satisfying $ER_j^2\le4EY_j^4$. Hence, we can go along the proof of
Lemma~\ref{l.LLNR} and we get the same upper bound for this term.
\end{proof}

Now we can go word by word along the proof of \cite[Lemma~3.5.3]{ps}
replacing $1/2-\alpha/2+\gamma$ and Lemma~3.5.1 
therein by $\gamma$ and Lemma~\ref{l.LLNT} from our setting, respectively, and we obtain
$$
\Big|\sum_{j=1}^MY_j-W(N)\Big|=O(N^\gamma)\,,\qquad\text{almost surely.}
$$
Recalling \eqref{eq.lastterm} and \eqref{eq.yjYj}, this concludes the proof of Theorem~\ref{t.main}.

\bibliographystyle{plain}
\bibliography{lil}
\end{document}